  \def\KK{\mathbb{K}}
  \def\FF{\mathbb{F}}
  \def\ZZ{\mathbb{Z}}
  \def\Cc{\mathcal{C}}
  \def\SL{\mathrm{SL}}
  \def\GL{\mathrm{GL}}
  \def\Fix{\mathrm{Fix}}
  \CheckCommand*\refstepcounter[1]{\stepcounter{#1}%
      \protected@edef\@currentlabel
       {\csname p@#1\endcsname\csname the#1\endcsname}%
  }
  \renewcommand*\refstepcounter[1]{\stepcounter{#1}%
    \protected@edef\@currentlabel
      {\csname p@#1\expandafter\endcsname\csname the#1\endcsname}%
  }
  \def\labelformat#1{\expandafter\def\csname p@#1\endcsname##1}
  \DeclareRobustCommand\Ref[1]{\protected@edef\@tempa{\ref{#1}}%
     \expandafter\MakeUppercase\@tempa
  }
  \newcommand{\numberlike}[2]{%
     \expandafter\def\csname c@#1\endcsname{%
         \expandafter\csname c@#2\endcsname}%
  }
  \def\DefaultNumberTheoremWithin{section}
  \theoremstyle{plain}
  \newtheorem{Lemma}{Lemma}
     \numberwithin{Lemma}{\DefaultNumberTheoremWithin}
     \numberwithin{Claim}{\DefaultNumberTheoremWithin}
  \newtheorem{Theorem}{Theorem}
     \numberwithin{Theorem}{\DefaultNumberTheoremWithin}
  \newtheorem{Corollary}{Corollary}
     \numberwithin{Corollary}{\DefaultNumberTheoremWithin}
  \newtheorem{Proposition}{Proposition}
     \numberwithin{Proposition}{\DefaultNumberTheoremWithin}
     \numberwithin{Conjecture}{\DefaultNumberTheoremWithin}
  \theoremstyle{definition}
     \numberwithin{Definition}{\DefaultNumberTheoremWithin}
  \theoremstyle{definition}
  \newtheorem{Question}{Question}
     \numberwithin{Question}{\DefaultNumberTheoremWithin}
  \theoremstyle{definition}
     \numberwithin{Problem}{\DefaultNumberTheoremWithin}
  \theoremstyle{remark}
     \numberwithin{Remark}{\DefaultNumberTheoremWithin}
  \theoremstyle{remark}
  \newtheorem{Example}{Example}
     \numberwithin{Example}{\DefaultNumberTheoremWithin}
     \numberwithin{Case}{Lemma}
     \numberwithin{Step}{Lemma}
  \newenvironment{note}[1][Note]
   {\bigskip\begin{center}\begin{boxedminipage}{4.5in}\setlength{\parindent}{1em}\noindent\textbf{#1. }}
   {\end{boxedminipage}\end{center}\bigskip}
\def\srl{{\mathcal R}}
\def\cl{{\mathrm{cl}}}
\def\la{\langle}
\def\ra{\rangle}
\def\rk{{{\mathsf r}{\mathsf k}}}
\def\il{{\mathsf I}{\mathsf n}{\mathsf t}}
\def\ov{\overline}
\def\Core{\mathrm{Core}}
\def\nnm{{\mathcal M}}
\begin{document}

\title{On the lattice of subracks of the rack of 
       a finite group}

\author{Istvan Heckenberger}
\address{Philipps-Universit\"at Marburg, 
         Fachbereich Mathematik und Informatik,
         35032 Marburg, Germany}
\email{heckenberger@mathematik.uni-marburg.de}

\author{John Shareshian}
\address{
    Department of Mathematics,
    Washington University,
    St. Louis, MO 63130, USA}
\email{shareshi@math.wustl.edu}

\author{Volkmar Welker}
\address{Philipps-Universit\"at Marburg, 
         Fachbereich Mathematik und Informatik,
         35032 Marburg, Germany}
\email{welker@mathematik.uni-marburg.de}

\begin{abstract}
  In this paper we initiate the study of racks from the combined perspective of
  combinatorics and finite group theory. A rack $R$ is a set with a self-distributive
  binary operation. 
  We study the combinatorics of the partially ordered set $\srl(R)$ 
  of all subracks of $R$ with inclusion as the order relation. 
  Groups $G$ with the conjugation operation provide an important class
  of racks. For the case $R = G$ we show that  
  \begin{itemize}
     \item the order complex of $\srl(R)$ has the homotopy type of a sphere,
     \item the isomorphism type of $\srl(R)$ determines if $G$ is abelian,
           nilpotent, supersolvable, solvable or simple,
     \item $\srl(R)$ is graded if and only if $G$ is abelian, $G = S_3$, $G = D_8$ or $G = Q_8$.
  \end{itemize}
  In addition, we provide some examples of subracks $R$ of a group $G$
  for which $\srl(R)$ relates to well studied combinatorial structures.
  In particular, the examples show that the order complex of $\srl(R)$ for 
  general $R$ is more complicated than in the case $R = G$.
\end{abstract}

\maketitle

\section{Introduction}

   A {\it rack} $R$ is a set (possibly empty) together with a binary 
   operation $\triangleright$ satisfying the following properties
   \begin{itemize}
     \item {\sf (Self Distributivity)} for all $a,b,c \in R$ we have 
        $a \triangleright (b \triangleright c) = (a \triangleright b) 
                    \triangleright (a \triangleright c)$
     \item {\sf (Bijectivity)} for all $a,b \in R$ there is a unique 
        $c \in R$ such that $a \triangleright c = b$. 
   \end{itemize}

   Rack like structures first appeared in finite geometry, in an
   attempt to axiomatize the concept of a reflection \cite{Th}. In 
   the 1980s racks and subclasses of racks emerged in knot theory under various 
   names (e.g. quandle (Joyce), distributive groupoid
   (Matveev), automorphic set (Brieskorn)). We refer the reader to \cite{ElNe} 
   for more details.  

	 
   Racks are intrinsically related to braidings. For that reason, in 
   recent years they have been scrutinized in the theory of braided vector 
   spaces and associated Hopf algebras \cite{AnGr}, \cite[Sect.~3.6]{An}.
   In this context a (co)homology theory of racks was developed and 
   (co)homological properties of racks were studied \cite{EtGr}.
   Other structural invariants of a rack are its inner group and its enveloping group.
   Rack morphisms can be used to define and classify (finite) simple racks. 
   Finite racks of small size were classified in \cite{Ve,Mc,HuStVo}.
	 
   In this paper, we initiate the
   study of (finite) racks from the combined perspective of combinatorics and 
   finite group theory.


    The set of subracks of a rack $R$ is partially ordered by inclusion.  
    We will study the structure of this poset in the case that $R$ is a finite group and $a \triangleright b=aba^{-1}$.  
    We will discuss the extent to which the algebraic structure of a group is determined by the combinatorial structure 
    of its subrack poset and determine the topological structure of the order complex of this poset.  Similar work using 
    subgroup lattices, coset posets and posets of $p$-subgroups has appeared in various papers and books, including 
    \cite{Brown,Q,Schmidt,Smith,ShareshianWoodroofe,ShareshianWoodroofeII,SuzukiII}.
    We show in \ref{lem:avoid} that the combinatorial structure of the subrack lattice of a finite group $G$ determines the 
    multiset of conjugacy class sizes of $G$.  Thus our work is related to the study of the influence on this multiset 
    on the structure of $G$ (see \cite{Ca,Ma}).

   We proceed by providing precise definitions and statements of our main results.
   If $G$ is a group and $R$ is a union of conjugacy classes of $G$ then $R$ 
   together with
   $a \triangleright b := aba^{-1}$ is easily checked to be a rack.
   This rack satisfies the additional identity
   \begin{itemize}
      \item for all $a \in A$ we have $a \triangleright a = a$.
   \end{itemize}
   Racks satisfying this identity are sometimes called {\it quandles}.
   A subset $Q \subseteq R$ of a rack is called a subrack if $Q$ with 
   the operation $\triangleright$ from $R$ is a rack. 
   Note that $R$ is a quandle if and only if every singleton is a subrack.

   We write $\srl(R)$ for the set of all subracks of $R$, partially ordered by containment.
   \ref{le:lattice} states the simple fact that $\srl(R)$ is indeed a
   lattice. 
   If $R$ is the rack of elements of a group $G$ with 
   $a \triangleright b := aba^{-1}$
   then we write $\srl(G)$ for the partially ordered set of subracks of this rack.

   We usually write $\leq$ for the order relation in $\srl(R)$.
   
  
   We write $\Delta(\srl(R))$ for the order complex of $\srl(R)$.  So, $\Delta(\srl(R))$ is the abstract 
   simplicial complex whose faces are all linearly ordered subsets of $\srl(R) \setminus \{\emptyset,R\}$.
   
   We show that up to homotopy $\Delta(\srl(G))$ is determined by the 
   number of conjugacy classes of $G$.

   \begin{Proposition} 
      \label{pr:grouprack}
      Let $G$ be a finite group with $c$ conjugacy classes of elements
      and $z(G)$ elements in the center of $G$. 
      If  $R$ is the rack of all non-central conjugacy classes in $G$ and 
      $Z$ is any set of central elements in $G$, then 
      \begin{enumerate} 
         \item $\srl(R \cup Z) \cong \srl(R) \times 2^Z$, and 
         \item $\Delta(\srl(R \cup Z))$ is homotopy equivalent to a $(c-z(G)+|Z|-2)$-sphere. 
      \end{enumerate}
      In particular, $\Delta(\srl(G))$ is homotopy equivalent to a 
      $(c-2)$-sphere.
   \end{Proposition}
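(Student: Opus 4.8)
The plan is to establish part (1) by a direct structural observation about central elements, and then to deduce part (2) and the final assertion by exhibiting a closure operator that collapses $\srl(R\cup Z)$ onto a Boolean lattice, whose order complex is a sphere.

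For part (1), the point is that a central element $z$ satisfies $z\triangleright x=zxz^{-1}=x$ and $x\triangleright z=xzx^{-1}=z$ for every $x$, so $z$ neither moves nor is moved by $\triangleright$. Hence for $Q\subseteq R\cup Z$ one checks that $Q$ is closed under $\triangleright$ if and only if $Q\cap R$ is: the only products $a\triangleright b$ that could leave $R$ have a central factor, and these return $b$ or $z$. Since conjugation is injective and everything is finite, closure under $\triangleright$ already makes a finite subset a subrack. Thus $Q\mapsto(Q\cap R,\,Q\cap Z)$ is a bijection onto pairs consisting of a subrack of $R$ and an \emph{arbitrary} subset of $Z$, and it is an order isomorphism, giving $\srl(R\cup Z)\cong\srl(R)\times 2^{Z}$.

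For part (2), write the non-central classes as $C_1,\dots,C_n$ with $n=c-z(G)$. Every union of classes is a subrack, and the unions of the $C_i$ together with arbitrary subsets of $Z$ form a subposet isomorphic to the Boolean lattice $B_{n+|Z|}$. I would define $\cl(Q)$ to be the union of all $C_i$ meeting $Q$, together with $Q\cap Z$. Then $\cl$ is order preserving, extensive ($Q\subseteq\cl(Q)$) and idempotent, i.e.\ a closure operator on $\srl(R\cup Z)$ with image $B_{n+|Z|}$. A closure operator induces a homotopy equivalence between the order complex of a poset and that of its image; provided $\cl$ maps the proper part $\srl(R\cup Z)\setminus\{\emptyset,R\cup Z\}$ into itself, this yields $\Delta(\srl(R\cup Z))\simeq\Delta(\overline{B_{n+|Z|}})$. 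The order complex of the proper part of $B_m$ is the barycentric subdivision of the boundary of an $(m-1)$-simplex, hence homotopy equivalent to $S^{m-2}$; with $m=n+|Z|=c-z(G)+|Z|$ this is the asserted $(c-z(G)+|Z|-2)$-sphere, and the case $R\cup Z=G$, where $|Z|=z(G)$, gives the $(c-2)$-sphere.

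The one substantial point — and the main obstacle — is that $\cl$ preserves the proper part, equivalently that \emph{the only subrack of $R$ meeting every non-central class is $R$ itself}. I would isolate this as a lemma and prove it as follows. Let $Q\subseteq R$ be a subrack meeting every $C_i$, and set $K=\langle Q\rangle$. Since $Q$ is a subrack, $Q=\{kqk^{-1}:k\in K,\ q\in Q\}$ is a union of $K$-orbits, so it suffices to show $K$ acts transitively on each $C_i$. Passing to $\bar G=G/Z(G)$, the image $\bar K$ meets every conjugacy class of $\bar G$, because every nontrivial class of $\bar G$ is the image of some non-central class of $G$ and $K$ meets each of the latter. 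By Jordan's theorem a finite group is never covered by the conjugates of a proper subgroup, so a subgroup meeting every class is the whole group; hence $\bar K=\bar G$, i.e.\ $KZ(G)=G$. Finally, for $x\in C_i$ we have $Z(G)\le C_G(x)$, so $K\,C_G(x)\supseteq KZ(G)=G$; as the $K$-orbits on $C_i\cong G/C_G(x)$ correspond to the double cosets $K\backslash G/C_G(x)$, this forces a single orbit, whence $Q\supseteq C_i$ for all $i$ and $Q=R$. This lemma is exactly what guarantees $\cl(Q)\ne R\cup Z$ for proper $Q$ (if $Q$ met all non-central classes then $Q\cap R=R$, and $\cl(Q)=R\cup Z$ would force $Z\subseteq Q$, so $Q=R\cup Z$), completing the homotopy computation above. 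Everything outside this lemma is formal.
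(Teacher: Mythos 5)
Your proof is correct, and its skeleton matches the paper's: part (1) by the same direct splitting off of central elements, and part (2) via the same closure operator (union of the conjugacy classes meeting $Q$, keeping $Q\cap Z$), the same appeal to Bj\"orner's closure lemma, and the same identification of the image with a Boolean lattice whose proper part triangulates a sphere. Where you genuinely diverge is in the one non-formal step, namely that the closure of a proper subrack is proper. The paper handles this by first characterizing the maximal subracks of the \emph{full} group rack: a subrack of $G$ meeting every $G$-class generates $G$ by \ref{lem:avoid} (your ``Jordan's theorem'', which the paper proves via Burnside's counting lemma), hence is a union of $G$-classes by \ref{le:groupcase}, hence equals $G$ (\ref{le:maxgrrack}); it then transfers this to $R\cup Z$ purely lattice-theoretically, by applying the product decomposition of \ref{le:product}(1) with the set of central elements outside $R\cup Z$, so that maximal elements of $\srl(R\cup Z)$ are read off from those of $\srl(G)$ (\ref{le:product}(2)). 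You instead prove the needed statement for $R$ alone --- the only subrack of $R$ meeting every non-central class is $R$ itself --- head-on: since $K=\langle Q\rangle$ need not be all of $G$ when $Q$ avoids the center, you pass to $G/Z(G)$, apply the covering fact there to get $KZ(G)=G$, and then use the double-coset count $K\backslash G/C_G(x)$ to conclude that $K$ is still transitive on each non-central class, so that $Q$, being a union of $K$-orbits, contains each class it meets. Your route is more group-theoretic and self-contained at this step, and it yields a slightly sharper local statement about $\srl(R)$; the paper's route is softer (once \ref{le:maxgrrack} is known, only lattice bookkeeping remains) and its intermediate lemmas are not throwaway, being reused later, e.g.\ in \ref{coatom} and in the proof of \ref{main}. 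Both arguments ultimately rest on the same classical covering fact, which you cite and the paper proves.
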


   As a first structural consequence, the combinatorics of $\srl(G)$ 
   determines the number of conjugacy classes of $G$. In our main result we 
   show that indeed more group theoretical information is encoded in $\srl(G)$.

   \begin{Theorem} \label{main}
    Let $G,H$ be finite groups satisfying $\srl(G) \cong \srl(H)$.  
    \begin{enumerate} 
      \item If $G$ is abelian then $H$ is abelian. 
      \item If $G$ is nilpotent then $H$ is nilpotent. 
      \item If $G$ is supersolvable then $H$ is supersolvable. 
      \item If $G$ is solvable then $H$ is solvable. 
      \item If $G$ is simple then $H$ is simple. 
    \end{enumerate}
   \end{Theorem}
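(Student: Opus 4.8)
The plan is to distil from an abstract lattice isomorphism $\srl(G)\cong\srl(H)$ a concrete combinatorial dictionary between $G$ and $H$, and then to recognise each of the five properties inside that dictionary. First I would pin down the atoms: since every singleton is a subrack (this is the quandle identity quoted above) and a nonempty subrack always contains a singleton, the atoms of $\srl(G)$ are exactly the $\{g\}$ with $g\in G$. Because a subset that is a subrack is the smallest subrack containing its own elements, every subrack is the join of the atoms below it; hence any lattice isomorphism $\srl(G)\to\srl(H)$ carries atoms to atoms and induces a bijection $\phi\colon G\to H$ for which $Q\subseteq G$ is a subrack if and only if $\phi(Q)$ is. In particular $|G|=|H|$.

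The crucial translation concerns two-element sets. For a finite subset, closure under $\triangleright$ already forces the subrack axioms (conjugation by a fixed element is an injection, hence a bijection of the finite set), so for distinct $a,b$ the set $\{a,b\}$ is a subrack precisely when $aba^{-1}\in\{a,b\}$; as $aba^{-1}=a$ would give $a=b$, this means $aba^{-1}=b$, i.e. $a$ and $b$ commute. Thus $\phi$ is an isomorphism of the commuting graphs of $G$ and $H$, whence $|C_G(x)|=|C_H(\phi(x))|$ for every $x$ and the multiset of class sizes $|G|/|C_G(x)|$ is preserved --- recovering, and refining, \ref{lem:avoid}. Statement (1) now drops out: $G$ is abelian iff its commuting graph is complete, equivalently every class size is $1$, and this is manifestly $\phi$-invariant (equivalently, $\srl(G)$ is Boolean iff $G$ is abelian).

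For (2), (3) and (4) I would match the property to the preserved class-size data using the theory of the influence of conjugacy class sizes on group structure \cite{Ca,Ma}: nilpotency, supersolvability and solvability each admit a characterisation through divisibility and coprimality relations among the numbers $|G|/|C_G(x)|$ (for solvability, e.g.\ via the common-divisor graph of the class sizes), and since $\phi$ transports the full multiset, the property passes from $G$ to $H$ by quoting the appropriate criterion. For (5) the starting point is Burnside's theorem that a nonabelian simple group has no nontrivial conjugacy class whose size is a prime power; combined with the rigidity of the class-size multiset of a simple group, one argues that a group $H$ sharing this data with a simple $G$ can itself only be simple.

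The main obstacle is exactly the group-theoretic input feeding (4) and (5). Everything upstream --- identifying the atoms, reconstructing the commuting graph, and reading off the conjugacy class sizes --- is routine once the ``join of atoms'' observation is in place. By contrast, that solvability and, especially, simplicity are detectable from centralizer orders alone is far from elementary: it draws on deep results about conjugacy class sizes, and in the simple case one expects to invoke the classification of finite simple groups to exclude a nonsimple group carrying the class-size multiset of a simple one. I would therefore expect the bulk of the work, and the real risk, to lie in establishing those two characterisations rather than in the combinatorial reduction.
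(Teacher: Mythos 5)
Your reduction step is sound: the atoms of $\srl(G)$ are the singletons, every subrack is the join of the atoms below it, so an isomorphism $\srl(G)\cong\srl(H)$ is induced by a bijection $\phi\colon G\to H$ under which a subset is a subrack if and only if its image is; two-element subracks are exactly commuting pairs, so $\phi$ preserves the commuting graph, centralizer orders, and the multiset of conjugacy class sizes. This matches the combinatorial groundwork of the paper (\ref{size}, \ref{coatom}, and the consequence of \ref{lem:avoid} noted in the introduction). Part (1) is then correct, and part (2) via the class-size multiset and the Cossey--Hawkes--Mann criterion \cite{CHM} is literally the paper's own ``second proof'' of \ref{main}(2).

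The gap is in (3), (4) and (5). You assert that supersolvability and solvability ``admit a characterisation through divisibility and coprimality relations among the numbers $|G|/|C_G(x)|$'' and that one can conclude ``by quoting the appropriate criterion.'' No such criterion exists in the literature: the paper states explicitly, in its comments following the proof, that it is \emph{not known} whether the multiset of class sizes detects solvability or supersolvability. Since your reduction discards everything about $\srl(G)$ except this multiset, the argument cannot be completed for (3) and (4); the same objection applies to (5), where ``the rigidity of the class-size multiset of a simple group'' is not a quotable theorem either. This is exactly why the paper retains much more of the lattice structure: it isolates, purely combinatorially (conditions (A)--(D)), the set $\nnm(G)$ whose maximal elements are non-normal subgroups with all proper overgroups normal, shows for solvable groups that these are precisely the non-normal maximal subgroups (\ref{pnnm1}, \ref{solvmax}, using Ore's theorem on cores), and then proves (4) by induction on $|G|$ with Kano's theorem \cite{Ka} and Burnside's normal $p$-complement theorem, (3) via Huppert's prime-index criterion \cite{Hu}, and (5) by analysing joins of conjugacy classes with atoms to constrain minimal normal subgroups of $H$, invoking Burnside's result \cite{Bu} on prime-power class sizes and the Feit--Seitz theorem \cite{FeSe} (which does rest on the Classification). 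In short, the commuting-graph dictionary you set up is the easy, correct part, but the heart of the theorem is precisely the extraction of subgroup-theoretic data from $\srl(G)$ \emph{beyond} the class-size multiset, and your proposal has no substitute for it.
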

 
   Finally, we are able to classify the finite groups $G$ for which 
   $\srl(G)$ is graded; that is all maximal chains have the same length.

   \begin{Theorem} \label{graded}
     Let $G$ be a finite group.  The lattice $\srl(G)$ is graded if and only if 
     $G$ is abelian or $G$ is isomorphic to one of $S_3$, $D_8$ or $Q_8$.
   \end{Theorem}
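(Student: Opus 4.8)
The plan is to treat the two implications separately, with the forward direction a finite verification and the converse an inductive reduction built on a single covering criterion.

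For the ``if'' direction, if $G$ is abelian then conjugation is trivial, every subset of $G$ is a subrack, and $\srl(G)=2^{G}$ is the Boolean lattice, which is graded of rank $|G|$. For $G\in\{S_3,D_8,Q_8\}$ I would exhibit a rank function directly, checking that every maximal chain from $\emptyset$ to $G$ has the same length (namely $4$ for $S_3$ and $6$ for $D_8$ and $Q_8$). For $D_8$ and $Q_8$ this is transparent: all maximal abelian subgroups have order $4$, and the same number of ``closure jumps'' is needed to pass from such a subgroup up to $G$. For $S_3$ it is a small numerical coincidence, in that the two maximal abelian subgroups, of orders $2$ and $3$, are compensated by tails of lengths $2$ and $1$, so that both routes give chains of length $4$.

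The engine for the converse is the following covering criterion, which I would establish first: for a subrack $Q$ and $g\in G\setminus Q$, the set $Q\cup\{g\}$ is again a subrack if and only if $g$ centralizes every element of $Q$. Indeed, if $qgq^{-1}\in Q$ for some $q\in Q$ then applying $q^{-1}(\cdot)q$ forces $g\in Q$, a contradiction, so $qgq^{-1}=g$; conversely, if $g\in C_G(Q)$ then conjugation by $\langle Q,g\rangle$ fixes both $Q$ and $g$. Two consequences drive everything. First, a chain of single-element covers starting at $\emptyset$ consists of pairwise commuting elements, and a maximal such chain terminates exactly at a maximal abelian subgroup $A$, reached in $|A|$ steps; hence in any grading the element $A\in\srl(G)$ has rank $|A|$. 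Second, for a subgroup $H\le G$ a subset of $H$ is a subrack of $G$ precisely when it is a subrack of $H$, so the interval $[\emptyset,H]$ equals $\srl(H)$; since every interval of a graded poset is graded, gradedness of $\srl(G)$ is inherited by $\srl(H)$ for all subgroups $H\le G$.

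For the ``only if'' direction I would argue by induction on $|G|$. Let $G$ be non-abelian and not isomorphic to $S_3$, $D_8$ or $Q_8$. If $G$ has a proper subgroup $H$ that is non-abelian and not on this list, then $\srl(H)$ is not graded by induction, and since $\srl(H)=[\emptyset,H]$ is an interval of $\srl(G)$, neither is $\srl(G)$. Otherwise every proper subgroup of $G$ is abelian or isomorphic to one of $S_3,D_8,Q_8$; this is a strong restriction, since the groups all of whose proper subgroups are abelian are the minimal non-abelian groups classified by Miller and Moreno, and allowing the three exceptional subgroups enlarges the list only mildly. This leaves a manageable family of base cases, such as $A_4$, the dihedral groups $D_{2p}$ for odd primes $p\ge 5$, the non-abelian groups of order $pq$, and the non-abelian groups of order $p^3$ with $p$ odd. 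In each base case I would exhibit two maximal chains of different lengths directly: a long chain that builds a maximal abelian subgroup of largest order one element at a time and then closes up, against a short chain starting from an element with small centralizer (for instance a reflection in $D_{2p}$, whose centralizer has order $2$), so that the covering criterion forces large closure jumps almost immediately. In $A_4$, for example, building the Klein four subgroup gives a chain of length $6$, while building a Sylow $3$-subgroup gives a chain of length $5$.

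The main obstacle is the base-case analysis, and in particular pinning down exactly where the length bookkeeping balances. The covering criterion expresses the length of a chain as a sum of a ``Boolean part'', of size equal to some maximal abelian subgroup, and a ``closure part'' whose length depends subtly on how conjugacy classes fuse under the subgroups generated along the chain; controlling the latter is precisely what makes the three exceptional groups special and is the crux of both directions. I expect the cleanest packaging to route the closure part through the coatoms of $\srl(G)$, which by \ref{lem:avoid} are governed by the conjugacy class sizes of $G$: gradedness forces all coatoms to have equal rank, and for every non-exceptional $G$ one then produces either two coatoms, or two maximal chains below a single coatom, of differing length.
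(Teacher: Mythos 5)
Your preparatory lemmas are correct and useful: the single-element covering criterion ($Q\cup\{g\}$ is a subrack if and only if $g\in C_G(Q)$), the identification $[\emptyset,H]_\rk=\srl(H)$ for subgroups $H\leq G$ (so gradedness passes to all subgroups), and the consequence that every maximal abelian subgroup $A$ must have rank $|A|$ in any grading; your sample computation in $A_4$ (chains of lengths $6$ and $5$) is also right, and your inductive reduction is logically valid. The genuine gap is what comes after the reduction. You assert that the family of base cases --- non-abelian groups outside $\{S_3,D_8,Q_8\}$ all of whose proper subgroups are abelian or in $\{S_3,D_8,Q_8\}$ --- is ``manageable'' and list only $A_4$, $D_{2p}$, groups of order $pq$ and groups of order $p^3$. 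That list is far from complete, and completing the analysis is precisely where the substance of the theorem lies. The minimal non-abelian groups alone (\ref{mm}) form two infinite families of unbounded order: the minimal non-abelian $p$-groups, which include metacyclic groups $\langle a,b\mid a^{p^m}=b^{p^n}=1,\ b^{-1}ab=a^{1+p^{m-1}}\rangle$ of arbitrarily large order, and the groups $\ZZ_q^{\ell}\rtimes\ZZ_{p^k}$ with faithful irreducible action and arbitrary $k,\ell$. Ruling these out is not a case-by-case check of small groups; it requires the uniform chain-length bookkeeping of \ref{minp} and \ref{minsdp}, where gradedness forces numerical identities such as $2p^{k-1}-p^{k-2}=p^{k-1}+p^{k-2}-p^{k-3}+p-1$, whence $p=2$, $k=3$. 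Moreover, because you allow proper subgroups isomorphic to $S_3$, $D_8$, $Q_8$, your base-case family is strictly larger than the Miller--Moreno class: it also contains $D_{16}$, $Q_{16}$, the semidihedral group of order $16$, $D_8\times\ZZ_2$, $\SL_2(3)$, $S_3\times\ZZ_p$, $D_8\times\ZZ_p$, $Q_8\times\ZZ_p$, $D_{12}\cong S_3\times\ZZ_2$, $D_{18}$, and $(\ZZ_3\times\ZZ_3)\rtimes\ZZ_2$, none of which appear on your list; in the paper these consume \ref{nab2}, \ref{small}, \ref{ind2} and the composition-series argument of the final proof.

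There is a second structural hole: nothing in your outline excludes a non-solvable base case. A base-case group has all proper subgroups solvable, so a priori it could be a minimal simple group, and excluding that possibility from the subgroup restrictions alone requires very heavy machinery (Thompson's classification of minimal simple groups, or Brauer--Suzuki and Gorenstein--Walter type theorems on Sylow $2$-subgroups). The paper instead extracts solvability from gradedness itself: \ref{npc} uses the restriction on minimal non-abelian subgroups plus Burnside's normal $p$-complement theorem to produce normal $p$-complements for all primes $p>3$, and \ref{solvable} finishes with Burnside's $p^aq^b$ theorem. Your closing remark about routing the argument through coatoms and \ref{lem:avoid} does not substitute for this step. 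In short, the scaffolding of your proposal is sound and close in spirit to the paper's (your rank observation plays the role of \ref{maxsg}), but the core of the proof --- the counting arguments for the two infinite families of minimal non-abelian groups, the treatment of the finitely many groups lying just above $S_3$, $D_8$, $Q_8$, and the solvability argument --- is missing.
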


   The paper is organized a follows. In Section \ref{sec2} we provide
   the basic notation and basic properties of the poset $\srl(R)$ of subracks
   of the rack $R$. In addition, we provide the proof of \ref{pr:grouprack} and exhibit
   examples that show that in general $\srl(R)$ need not be Cohen-Macaulay, or even graded,
   and that $\Delta(\srl(R))$ can have the homotopy type of a wedge of spheres of different dimensions.
   In \ref{sec3} we provide the proof of 
   \ref{main}.  The proof of \ref{graded} appears in \ref{sec4}.
   We close with a few open questions in \ref{sec5}.

\section{Racks and Subracks}
   \label{sec2}

   We know of no general restriction on the structure of the poset
   $\srl(R)$ other than the following one (see \ref{sec5} for known 
   restrictions in case $R$ is a quandle).

   \begin{Lemma}
     \label{le:lattice}
      If $R$ is a rack, then the partially ordered set $\srl(R)$ is a lattice.
      If $R$ in addition is a quandle. then $\srl(G)$ is atomic.
   \end{Lemma}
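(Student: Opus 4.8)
The statement has two parts. First I would establish that $\srl(R)$ is a lattice; then that it is atomic when $R$ is a quandle. The standard strategy for showing a poset is a lattice is to exhibit a top element, a bottom element, and to show that arbitrary intersections of elements are again elements of the poset (closure under meets), from which the existence of joins follows by a formal argument. Here the empty set $\emptyset$ and the whole rack $R$ are trivially subracks, serving as the bottom and top of $\srl(R)$. So the crux of the first part is the closure claim.

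For closure under intersection, let $\{Q_i\}_{i \in I}$ be any family of subracks of $R$ and set $Q := \bigcap_{i} Q_i$. I would check directly that $Q$ inherits the two rack axioms. For self-distributivity this is immediate: if $a,b,c \in Q$ then they lie in every $Q_i$, and the identity $a \triangleright (b \triangleright c) = (a \triangleright b) \triangleright (a \triangleright c)$ holds inside each $Q_i$, hence in $Q$. The point requiring a little care is bijectivity: I must verify that for $a,b \in Q$ the unique $c \in R$ with $a \triangleright c = b$ actually lies in $Q$. Since $a,b \in Q_i$ for each $i$ and each $Q_i$ is a subrack, the element $c$ lies in every $Q_i$ (the solution is unique in $R$, so the solution inside $Q_i$ must coincide with it), and therefore $c \in Q$. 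Thus $Q$ is a subrack, i.e.\ $Q \in \srl(R)$, and it is clearly the meet of the family. The meet-closedness of a poset with a top element is a standard sufficient condition for being a complete lattice: the join of a family is the meet of all its common upper bounds, which exist because $R$ is a common upper bound and the meet of the (nonempty) set of upper bounds is itself an upper bound. I do not expect any genuine obstacle here; the only subtlety is the bijectivity check, which is why the argument must use uniqueness of the solution in $R$ rather than merely closure of the operation.

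For the second part, suppose $R$ is a quandle, so that every singleton $\{a\}$ is a subrack (this is exactly the condition noted in the excerpt, since $a \triangleright a = a$ makes $\{a\}$ closed and bijective on itself). Atomic means every nonzero element of the lattice lies above an atom, where the atoms are the minimal nonempty subracks. The singletons $\{a\}$ are precisely the minimal nonempty subracks, hence the atoms. Then any nonempty subrack $Q$ contains some element $a$, so $\{a\} \leq Q$ with $\{a\}$ an atom, establishing atomicity. The one thing to confirm is that each singleton really is minimal, which is immediate since the only proper subset of a singleton is $\emptyset$. I would phrase this part in one or two sentences, as it follows directly once the quandle hypothesis is unpacked.
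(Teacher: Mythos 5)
Your proposal is correct and follows essentially the same route as the paper: show that intersections of subracks are subracks (with the key point being uniqueness of the solution of $a \triangleright c = b$ in $R$), then obtain joins formally from the existence of the top element $R$. The only differences are cosmetic — you work with arbitrary families rather than pairs, and you spell out the atomicity argument for quandles, which the paper's proof leaves implicit as an easy observation.
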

   \begin{proof}
     Clearly, $\emptyset$, $R \in \srl(R)$ are the unique minimal and maximal 
     elements of $\srl(R)$.
     If $Q, Q'$ are two subracks of $R$ then for $a,b \in Q \cap Q'$ we 
     have $a \triangleright b \in Q \cap Q'$. As $\triangleright$ is 
     self-distributive on $Q$ and $Q'$ it is so on $Q \cap Q'$. 
     Similarly for $a,b \in Q \cap Q'$ there is a unique $c \in R$ such that 
     $a \triangleright c = b$. Since $Q$ and $Q'$ are subracks this $c$
     must then lie in $Q \cap Q'$. Hence $Q \cap Q'$ is a subrack. 
     But then $Q \cap Q'$ is the unique maximal subrack contained in $Q$ 
     and $Q'$ thus the infimum of $Q$ and $Q'$ exists. Since $\srl(R)$ has a 
     unique maximal element this implies that $Q$ and $Q'$ also
     have a supremum and hence that $\srl(R)$ is a lattice.
   \end{proof}

   If $R$ is a quandle then each subrack of $R$ is also a quandle.
   Therefore, there is no need to consider the poset of subquandles of a 
   quandle separately. 
 
   Next we turn to examples where $R$ is a subrack of the rack given
   by a group and conjugation.
   The following fact will be useful and is an immediate consequence 
   of the definitions.

   \begin{Lemma}
     \label{le:groupcase}
     Let $R = G$ be the rack given by a group $G$ and conjugation.
     If $Q$ is a subrack of $R$ such that $G = \langle Q \rangle$ 
     is the group generated by $Q$, then $Q$ is a union of conjugacy 
     classes of $G$. 
   \end{Lemma}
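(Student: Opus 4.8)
The plan is to show that the setwise stabilizer of $Q$ under the conjugation action of $G$ on its own subsets is all of $G$; this is precisely the assertion that $Q$ is a union of conjugacy classes. First I would unwind the two subrack axioms for $Q$. Closure under $\triangleright$ says that for all $a,b \in Q$ we have $a \triangleright b = aba^{-1} \in Q$, so conjugation by $a$ carries $Q$ into $Q$. The bijectivity axiom, applied inside the subrack $Q$, says that for all $a,b \in Q$ there is a (unique) $c \in Q$ with $a \triangleright c = aca^{-1} = b$; equivalently $c = a^{-1}ba \in Q$, so conjugation by $a^{-1}$ also carries $Q$ into $Q$.

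Next I would fix $a \in Q$ and consider the two self-maps $x \mapsto axa^{-1}$ and $x \mapsto a^{-1}xa$ of $G$, which are mutually inverse. By the previous paragraph each of them restricts to a map $Q \to Q$, and since they remain mutually inverse after restriction, their restrictions are mutually inverse bijections of $Q$. In particular $x \mapsto axa^{-1}$ is onto $Q$, which forces the equality $aQa^{-1} = Q$. Thus every $a \in Q$ lies in the subgroup $N := \{\, g \in G : gQg^{-1} = Q \,\}$, the stabilizer of $Q$ in the action of $G$ on its subsets by conjugation.

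Finally, since $N$ is a subgroup containing $Q$, it contains the subgroup $\langle Q \rangle$ generated by $Q$, which by hypothesis equals $G$; hence $N = G$. This means $gQg^{-1} = Q$ for every $g \in G$, so whenever $b \in Q$ the entire conjugacy class $\{\, gbg^{-1} : g \in G \,\}$ is contained in $Q$. Therefore $Q$ is a union of conjugacy classes of $G$, as claimed.

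The only genuinely delicate point is the step that promotes ``conjugation by $a$ maps $Q$ into $Q$'' to ``conjugation by $a$ maps $Q$ onto $Q$'': without assuming finiteness one cannot argue by counting, so it is essential to use the bijectivity axiom to obtain the inclusion $a^{-1}Qa \subseteq Q$ as well, after which the two inclusions combine to give the required equality $aQa^{-1} = Q$. Everything else is a formal consequence of the fact that a setwise stabilizer is a subgroup together with the generation hypothesis $\langle Q \rangle = G$.
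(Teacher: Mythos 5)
Your proof is correct, and it is precisely the argument the paper has in mind: the paper states this lemma without proof, calling it ``an immediate consequence of the definitions,'' and your writeup is the natural unpacking of that claim (closure gives $aQa^{-1}\subseteq Q$, bijectivity inside $Q$ gives $a^{-1}Qa\subseteq Q$, hence $Q$ lies in the setwise stabilizer, which is a subgroup containing $\langle Q\rangle = G$). Your remark about using the bijectivity axiom rather than a counting argument is well taken, since it makes the proof valid without any finiteness hypothesis.
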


   Next we show that familiar combinatorial objects arise among 
   subrack lattices $\srl(R)$.

   \begin{Example}
     \label{ex:transposition}
      Let $G = S_n$ be the symmetric group on $n$ letters and let $R$ be the 
      quandle, defined by the conjugacy class of transpositions.
      Let $Q$ be a subrack of $R$. Let $H = \langle Q \rangle$ then $H$ is a 
      subgroup of $S_n$ generated by transpositions. Hence 
      $H = S_{B_1} \times \cdots \times S_{B_r}$
      is the direct product of full symmetric groups 
      $S_{B_i}$ on the blocks $B_i$ of a set partition 
      $B_1|\cdots|B_r$ of $[n]:=\{1,\ldots, n\}$.
      By \ref{le:groupcase} it follows that $Q$ is a union of conjugacy 
      classes of $H$.
      Since $Q$ consists of transpositions it follows that $Q$ is the set of all
      transpositions $\tau = (\ell~k)$ for which there is some $i$ such that 
      $\{ \ell,k\} \subseteq B_i$. 
      Thus there is an isomorphism between the lattice $\srl(R)$ and the 
      lattice $\Pi_n$ of set partitions of $[n]$.
   \end{Example}

   It is interesting to see the partition lattice appear among the $\srl(R)$,
   but it is somewhat misleading as in general we do not know what restrictions
   if any govern the structure of $\srl(R)$. 
   
   One possible approach to capture the structure of partially ordered sets is
   the study of their order complexes. 
   For a partially ordered set $P$ with unique maximal element 
   $\hat{1}$ and unique minimal element $\hat{0}$ the {\it order complex} 
   $\Delta(P) = \{ \hat{0} < p_1 < \cdots < p_\ell < \hat{1}~|~p_i \in P\}$ 
   of $P$ is the simplicial complex of all linearly ordered 
   subsets of $P \setminus \{ \hat{0},\hat{1}\}$.
   In particular, we can now speak of homology and homotopy type when we 
   consider partially ordered sets.
   An important homological property of partially ordered sets is the
   {\it Cohen-Macaulay} property (over a field $\KK$); see \cite[11.5]{Bjoerner}. 

   \begin{Example}
     The order complex $\Delta(\Pi_n)$ of the partition lattice is a geometric 
     lattice and hence well known to be Cohen-Macaulay over any field (see \cite[11.10]{Bjoerner}).
     Thus by \ref{ex:transposition} the subrack lattice $\srl(R)$ for $R$
     the rack of transpositions in 
     $S_n$ is Cohen-Macaulay over any field. 
   \end{Example}

   One consequence of a poset $P$ being Cohen-Macaulay over a field $\KK$ is the 
   property that the homology of $\Delta(P)$ is concentrated in dimension $r-2$, where
   $r$ is the length of a maximal chain in $P$.  
   This fails for $\srl(R)$ in general.

   \begin{Example}
      \label{ex:fourcycle}
      Let $R$ be the rack of $4$-cycles in $S_4$. 
      Then $R$ contains $6$ elements. Clearly, every $4$-cycle is a subrack
      and any pair consisting of a $4$-cycle and its inverse is a subrack. But if we have
      two $4$-cycles that are not inverses of each other then they generate
      $S_4$ as no subgroup of $S_4$ contains more than one cyclic subgroup of
      order $4$.
      Thus $\srl(R) \setminus \{ R,\emptyset\}$ is disconnected and 
      $\dim \Delta(\srl(R))$ is $1$. Thus $\Delta(\srl(R))$ is not 
      Cohen-Macaulay.
   \end{Example}

   Another consequence of the Cohen-Macaulay property for a partially ordered
   set $P$ is that $P$ is graded.

   \begin{Example}
     \label{ex:chains}
     Let $R$ be the rack of all $5$-cycles in $A_5$.  Then $R$ splits into 
     two conjugacy classes, $C$ and $D$, as a $5$-cycle $x$ generates its 
     own centralizer in $S_5$. In particular, a $5$-cycle $x$ is conjugate 
     to $x^4$ but to neither $x^2$ nor $x^3$. Also, if a $5$-cycle $y$ is 
     not a power of $x$ then $ \langle x,y \rangle =A_5$.

     For $x \in C$ we have that 

     $$\emptyset < \{x\} < \{x,x^2\} < \{x,x^2,x^3\} < \{x,x^2,x^3,x^4\} < R$$
 
     is a maximal chain of length $5$.

     On the other hand,

     $$\emptyset < \{x\} < \{x,x^4\} < C < R$$

     is another maximal chain and of length $3$.
     Thus $\srl(R)$ is not graded and hence not Cohen-Macaulay.
   \end{Example}

   In the two examples above we have studied subracks $R$ of the 
   rack of a finite group $G$ with conjugation.
   Hence the subrack lattices $\srl(R)$ appear as
   intervals in $\srl(G)$. This may indicate that $\srl(G)$ can
   exhibit an almost arbitrary behavior. Next we show that this is
   not the case.
   As a preparation for the proof we need the following well known 
   fact from group theory.

   \begin{Lemma}
    \label{lem:avoid} 
    Let $G$ be a group and $H < G$ a proper subgroup of $G$. 
    Then there is a conjugacy class $C$ in $G$ such that $C \cap H =
    \emptyset$.  
  \end{Lemma}
  \begin{proof}
    Consider the set $X = \{ Hg ~|~g\in G\}$ of right cosets of $H$ in $G$ as a $G$-set
    with $G$ acting by right multiplication.
    The Burnside lemma states
    $$ | X/G| = \frac{1}{|G|} \sum_{g \in G} |\Fix_g(X)|$$ where
    $X/G$ is the set of orbits of $G$ on $X$ and 
    $\Fix_g(X) = \{ x \in X~ |~ xg = x\}$. Clearly, $|X/G| = 1$ and
    $|\Fix_1(X)| = |X| > 1$.

    Assume for contradiction that $H$ contains an element of every conjugacy class.
    Then $\bigcup_{h \in G} H^h = G$. 
    For any $g \in G$, we have $Hh g = Hh$ if and only if 
    $g \in H^{h}$. Thus $|\Fix_g(X)| \geq 1$
    for every $g \in G$.
    Now 
    $$ 1 = | X/G| = \frac{1}{|G|} \sum_{g \in G} |\Fix_g(X)| > 1,$$
    yielding the desired contradiction.
  \end{proof}
   
  Now we are in position to prove the crucial lemmas leading 
  to \ref{pr:grouprack}.
  
  \begin{Lemma}
    \label{le:maxgrrack}
    Let $G$ be a finite group.
    A subrack $Q$ of $G$ is maximal in $\srl(G)$ if and only if
    $Q$ is the union of all but one conjugacy class of $G$.
  \end{Lemma}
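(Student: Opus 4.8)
The plan is to prove both implications by leveraging the two preparatory lemmas, \ref{le:groupcase} and \ref{lem:avoid}, together with the elementary observation that \emph{any} union of conjugacy classes of $G$ is a subrack. Indeed, if $S$ is such a union and $a,b \in S$, then $a \triangleright b = aba^{-1}$ lies in the same conjugacy class as $b$, hence in $S$, and the unique $c$ with $a \triangleright c = b$ is $c = a^{-1}ba$, again in the class of $b$ and so in $S$. In particular, for every conjugacy class $C$ the complement $G \setminus C$ is a subrack, and it is proper since $C \neq \emptyset$.

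For the implication that $Q = G \setminus C$ is maximal, I would take any subrack $Q'$ with $Q \subsetneq Q' \subseteq G$ and show $Q' = G$. Such a $Q'$ must contain some element $x \in C$. Then $\langle Q' \rangle$ contains all of $G \setminus C$ together with $x$, so it meets every conjugacy class of $G$; by the contrapositive of \ref{lem:avoid} this forces $\langle Q' \rangle = G$. Now \ref{le:groupcase} applies and shows $Q'$ is a union of conjugacy classes. Since $x \in C \cap Q'$, the whole class $C$ is contained in $Q'$, whence $Q' = (G \setminus C) \cup C = G$, as desired.

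For the converse, let $Q$ be a maximal subrack; in particular $Q$ is proper. The key step is to produce a single conjugacy class $C$ disjoint from $Q$. I would split on whether $Q$ generates $G$. If $\langle Q \rangle \neq G$, then \ref{lem:avoid} directly yields a conjugacy class $C$ with $C \cap \langle Q \rangle = \emptyset$, hence $C \cap Q = \emptyset$. If instead $\langle Q \rangle = G$, then \ref{le:groupcase} shows $Q$ is a union of conjugacy classes; since $Q$ is proper it must omit at least one class $C$, which is then disjoint from $Q$. In either case $Q \subseteq G \setminus C \subsetneq G$, and since $G \setminus C$ is itself a proper subrack, maximality of $Q$ forces $Q = G \setminus C$.

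The only place demanding care is the converse when $Q$ fails to generate $G$ --- for instance $Q = A_3 \subseteq S_3$, which is maximal yet generates only $A_3$. A naive argument trying to realize every conjugate of $x$ by conjugating within $\langle Q \rangle$ breaks down here, and it is precisely \ref{lem:avoid} that bridges the gap by guaranteeing an omitted class. I expect this case distinction, rather than any computation, to be the main obstacle.
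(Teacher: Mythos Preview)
Your proof is correct and follows essentially the same approach as the paper, using \ref{lem:avoid} and \ref{le:groupcase} in the same way. The paper is slightly more streamlined in that it avoids your case distinction by directly showing that any proper subrack must miss a conjugacy class (if $Q$ meets every class then $\langle Q\rangle=G$ by \ref{lem:avoid}, whence $Q$ is a union of classes by \ref{le:groupcase} and so $Q=G$), but this is a cosmetic difference only.
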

  \begin{proof}
     Let $Q \in \srl(G)$ be a subrack of $G$. 
     If there is no conjugacy 
     class $C$ of $G$ for which $C \cap Q = \emptyset$ then by 
     \ref{lem:avoid} we must have $\langle Q \rangle = G$. 
     Then by \ref{le:groupcase} $Q$ must be a union of conjugacy classes 
     and hence $Q = G$. 
     Thus if $Q \neq G$ then there is a conjugacy class $C$ of $G$ with 
     $C \cap Q = \emptyset$. Since a union of conjugacy classes is 
     always a subrack, it follows that the maximal
     elements of $\srl(G)$ are the unions of all but one conjugacy
     class in $G$.
  \end{proof}

  \begin{Lemma}
    \label{le:product}
    Let $G$ be a finite group and $R$ be a subrack of $G$ that is a union of
    conjugacy classes.
    Then:
    \begin{itemize} 
      \item[(1)] 
        If $Z$ is a set of central elements such that 
        $Z \cap R = \emptyset$ then the map $Q \mapsto (Q \cap R,Q \cap Z)$ 
        induces an isomorphism $\srl(R \cup Z) \cong \srl(R) \times \srl(Z)$.
      \item[(2)] If $Q$ is a maximal element of $\srl(R)$ then
        $Q$ is the union of all but one conjugacy class from $R$.
    \end{itemize}
  \end{Lemma}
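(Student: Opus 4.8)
The plan is to handle the two parts by different means. For part (1) I would first record the two facts that make central elements inert for the rack structure: if $z$ is central then $z\triangleright b=b$ and $a\triangleright z=z$ for all $a,b$; in particular every subset of $Z$ is a subrack, so $\srl(Z)=2^Z$. With this in hand I would check that $Q\mapsto(Q\cap R,Q\cap Z)$ is a poset isomorphism by exhibiting its inverse $(A,B)\mapsto A\cup B$. Well-definedness of the forward map is immediate, since $Q\cap R$ is an intersection of subracks and hence a subrack by the argument in \ref{le:lattice}, while $Q\cap Z$ is a set of central elements. The only thing requiring a (short) verification is that $A\cup B$ is a subrack: closure under $\triangleright$ and unique solvability of $a\triangleright c=b$ both reduce, after splitting into cases on whether each argument lies in $A$ or in $B$, either to the corresponding property of the subrack $A$ or to the triviality of $\triangleright$ on central arguments. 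Because $Z\cap R=\emptyset$ the two maps are mutually inverse, and both are order preserving, giving $\srl(R\cup Z)\cong\srl(R)\times 2^Z$.

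For part (2) I would imitate \ref{le:maxgrrack}. Let $Q$ be a maximal proper subrack of $R$ and set $H=\langle Q\rangle$. Since any union of conjugacy classes contained in $R$ is a subrack, it is enough to find one conjugacy class $C\subseteq R$ with $C\cap Q=\emptyset$: then $Q\subseteq R\setminus C$, and maximality together with the fact that $R\setminus C$ is a subrack forces $Q=R\setminus C$. To produce such a $C$ I would argue by contradiction, supposing that $Q$ meets every conjugacy class contained in $R$. Applying \ref{le:groupcase} to the rack of $H$ shows that $Q$ is a union of $H$-conjugacy classes, and if $H\neq G$ then \ref{lem:avoid} supplies a $G$-conjugacy class disjoint from $H$; the aim is to convert this into a class lying in $R$ that $Q$ avoids, contradicting the supposition and yielding $Q=R$.

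The hard part is exactly this last conversion. The class produced by \ref{lem:avoid} avoids $H=\langle Q\rangle$ but need not be contained in $R$, and since $H$-conjugacy is typically coarser than $G$-conjugacy, the hypothesis that $Q$ meets every class of $R$ does not on its own force $\langle Q\rangle=G$. This is not a mere technicality: it is the mechanism behind \ref{ex:chains}, where $Q=\{x,x^2,x^3,x^4\}$ is a maximal subrack of the rack of all $5$-cycles that meets both conjugacy classes. The step I expect to be decisive is therefore the input that guarantees some class disjoint from $\langle Q\rangle$ actually lies in $R$. For the rack of all non-central classes, which is the case needed for \ref{pr:grouprack}, I would establish this by showing that a subrack meeting every non-central class must generate $G$, so that \ref{le:groupcase} applies and gives $Q=R$, closing the contradiction.
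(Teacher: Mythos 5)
Your proof of part (1) is correct and is precisely the verification the paper leaves as ``easily verified'': since central elements satisfy $z\triangleright b=b$ and $a\triangleright z=z$, the map $(A,B)\mapsto A\cup B$ is a well-defined, order-preserving inverse to $Q\mapsto(Q\cap R,Q\cap Z)$. Your diagnosis of part (2) is also correct, and it is a sharp observation: \ref{ex:chains} really does contradict statement (2) as literally written, since $\{x,x^2,x^3,x^4\}$ is a coatom of the subrack lattice of the rack of all $5$-cycles in $A_5$ that meets both conjugacy classes but is not the complement of a class. The paper's own proof of (2) silently restricts to the case it actually needs: it sets $Z=Z(G)\setminus R$ and invokes (1) to obtain $\srl(G)\cong\srl(R)\times\srl(Z)$, which is legitimate only when $R\cup Z=G$, i.e.\ when $R$ contains every non-central class --- exactly the situation of \ref{pr:grouprack}. (One small slip: $H$-conjugacy is finer, not coarser, than $G$-conjugacy; your point is unaffected.)

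The genuine gap is that your decisive step --- that a subrack meeting every non-central class must generate $G$ --- is announced but never proved, and it does not follow from \ref{lem:avoid} alone, because the class that \ref{lem:avoid} produces may be a central singleton outside $\langle Q\rangle$. It can be proved as follows. Put $H=\langle Q\rangle$ and $Z=Z(G)$. Every non-central element of $G$ is conjugate into $H$, hence every element of $G/Z$ is conjugate into $HZ/Z$; applying \ref{lem:avoid} to $G/Z$ forces $HZ=G$. Consequently conjugation in $G$ is implemented by elements of $H$, so $G$-classes are $H$-orbits. If $H\neq G$, choose $z\in Z\setminus H$ and a non-central $h\in H$ (the latter exists when $G$ is non-abelian, since $H\subseteq Z$ would give $G=HZ=Z$); then the $G$-class of $hz$ is $\{h^{h'}z : h'\in H\}$, which is non-central and disjoint from $H$, contradicting the hypothesis. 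Note that the paper's route avoids this extra group theory altogether: with \ref{le:maxgrrack} and part (1) in hand, the coatoms of $\srl(G)\cong\srl(R)\times 2^{Z}$ are exactly the pairs $(\text{coatom},\hat{1})$ and $(\hat{1},\text{coatom})$, and reading off the first coordinate yields (2) for every $R$ containing all non-central classes. Your direct approach does work, but only once the missing claim above is supplied.
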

  \begin{proof}
     \begin{itemize}
       \item[(1)] The claim is easily verified. 
       \item[(2)] For $R= G$, $Z = \emptyset$ the claim holds by
         \ref{le:maxgrrack}.
      
         Next we consider the general case. For that, let $Z$ be the set of
         central elements of $G$ not contained in $R$. Then by (1) 
         there is an isomorphism from $\srl(G)$ to $\srl(R) \times \srl(Z)$, 
         sending $Q \in \srl(G)$ to $(Q \cap R, Q \cap Z)$. Since we 
         already know that each maximal element of $\srl(G)$ is obtained from $G$
         by removing a conjugacy class, it follows that the maximal elements
         of $\srl(R)$ are the subsets that are the union of all
         but one conjugacy class in $R$.
    \end{itemize}
  \end{proof}

   \begin{proof}[Proof of \ref{pr:grouprack}]
     Claim (i) follows from \ref{le:product} (1) and the fact
     that $\srl(Z)=2^Z$ whenever $Z \subseteq G$ is a set of pairwise commuting
     elements.

     For (ii) consider the map $\phi : \srl(R \cup Z) \rightarrow \srl(R \cup Z)$ that
     sends a subrack $Q$ to the union of all conjugacy classes $C$ for which 
     $Q \cap C \neq \emptyset$. Then $Q \leq \phi(Q) = \phi(\phi(Q))$ 
     and $Q \leq Q'$ implies $\phi(Q) \leq \phi(Q')$. By \ref{le:product}(2)
     we know that $\phi(Q) \neq R \cup Z$ if $Q \neq R \cup Z$. Hence $\phi$ is
     an upward closure operator on $\srl(R \cup Z)$. In particular, 
     by \cite[Cor. 10.12]{Bjoerner} $\Delta(\srl(R \cup Z))$ and 
     $\Delta(\phi(\srl(R \cup Z)))$ are homotopy equivalent.
     A subrack $Q$ of $R \cup Z$ lies in the
     image of $\phi$ if and only if $Q$ is the union of some $G$-conjugacy classes.
     Numbering the conjugacy classes $1$ up to $c$ then
     provides an identification of $\phi(\srl(R \cup Z))$ and 
     $2^{[c-z(G)+|Z|]}$ ordered
     by inclusion. Since $\Delta(2^{[c-z(G)+|Z|]})$ is a triangulation of a 
     $(c-z(G)+|Z|-2)$-sphere we are done.
  \end{proof}

  When working out the homology of $\Delta\srl(R)$ in
  \ref{ex:transposition}, \ref{ex:fourcycle} and \ref{ex:chains}, one 
  sees that this homology of is concentrated in one dimension in all three cases. 
  \ref{pr:grouprack} 
  provides another instance of the phenomenon. We next show that this is not 
  the case in general. 
  
  For $1 \leq k \leq n$, 
  $\Pi_{n,k}$ will denote the set of all
  partitions $B_1|\ldots|B_r$ in $\Pi_n$ such that, for all
  $1 \leq i \leq r$, either $|B_i| = 1$ or $|B_i| \geq k$.  
  The lattice $\Pi_{n,k}$ is called the $k$-equal partition lattice 
  (see e.g. \cite{BjoernerWelker}).

  \begin{Proposition} 
    \label{pro:pcycles}
     Let $p < n-2$ be an odd prime and $R_{n,p}$ be the rack of all 
     $p$-cycles in the alternating group $A_n$. Then the order complexes of
     $\srl(R_{n,p})$ and $\Pi_{n,p}$ are homotopy equivalent. 
   \end{Proposition}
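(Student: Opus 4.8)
The plan is to compare the two posets through the order-preserving map $f \colon \srl(R_{n,p}) \to \Pi_{n,p}$ sending a subrack $Q$ to the partition of $[n]$ whose blocks are the orbits of $\langle Q \rangle$ (equivalently, the connected components of the hypergraph on $[n]$ whose edges are the supports of the $p$-cycles in $Q$, with the remaining points as singletons). First I would check that $f$ is well defined: every point moved by some $p$-cycle of $Q$ lies in an orbit containing the full support of that cycle, so each non-singleton block has size at least $p$ and $f(Q)$ indeed lies in $\Pi_{n,p}$. Monotonicity is clear, since $Q \subseteq Q'$ gives $\langle Q\rangle \le \langle Q'\rangle$, and $f(\emptyset)=\hat{0}$, $f(R_{n,p})=\hat{1}$.

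The crucial point is that $f$ carries the proper part $\srl(R_{n,p})\setminus\{\emptyset,R_{n,p}\}$ into the proper part $\Pi_{n,p}\setminus\{\hat{0},\hat{1}\}$. As $f(Q)=\hat{0}$ forces $\langle Q\rangle=1$ and hence $Q=\emptyset$, only the top needs attention: I must show that $f(Q)=\hat{1}$, i.e. $\langle Q\rangle$ transitive on $[n]$, forces $Q=R_{n,p}$. Here I would first argue that a transitive group generated by $p$-cycles is primitive. Indeed, in a transitive imprimitive group with blocks of size $b>1$, a $p$-cycle, having prime order, acts on the set of blocks with order $1$ or $p$; the latter would move all $b$ points in each of $p$ blocks, hence $pb>p$ points, which is impossible, so every $p$-cycle fixes each block setwise and $\langle Q\rangle$ cannot be transitive. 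Thus $\langle Q\rangle$ is primitive, and since it contains a $p$-cycle with $p$ prime and $p\le n-3$, Jordan's theorem gives $\langle Q\rangle\supseteq A_n$; as $p$-cycles are even, $\langle Q\rangle=A_n$. By \ref{le:groupcase}, $Q$ is then a union of $A_n$-conjugacy classes of $p$-cycles. Because a $p$-cycle fixes at least three points, its $S_n$-centralizer contains a transposition of fixed points, so the single $S_n$-class of $p$-cycles does not split in $A_n$; hence $Q=R_{n,p}$. This shows $f$ restricts to an order-preserving map between the two proper parts.

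With this restriction in hand I would invoke Quillen's fiber lemma (see \cite{Bjoerner}). For every $\pi\in\Pi_{n,p}\setminus\{\hat{0},\hat{1}\}$ the fiber $\{Q : \emptyset\ne Q\ne R_{n,p},\ f(Q)\le\pi\}$ consists exactly of the nonempty subracks all of whose $p$-cycles are supported inside a single block of $\pi$. This poset has a greatest element, namely $Q_\pi$, the set of all $p$-cycles whose support lies in some block of $\pi$; one checks that $Q_\pi$ is a subrack, that $f(Q_\pi)=\pi$, and that $Q_\pi\ne R_{n,p}$ since $\pi\ne\hat{1}$ admits a $p$-cycle straddling two blocks. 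A poset with a greatest element has contractible order complex, so every fiber is contractible, and Quillen's fiber lemma yields the desired homotopy equivalence $\Delta(\srl(R_{n,p}))\simeq\Delta(\Pi_{n,p})$.

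The main obstacle is the group-theoretic input of the second paragraph: the reduction to primitivity and the application of Jordan's theorem, which is exactly where the hypothesis $p<n-2$ enters. Everything else — well-definedness and monotonicity of $f$, and the existence of a maximum in each fiber — is routine. I would take some care over the small blocks of size $p$, $p+1$, $p+2$ (for which Jordan's bound is not needed, as such a block is never all of $[n]$ here) and over the non-splitting of the $p$-cycle class in $A_n$, but neither is a genuine difficulty.
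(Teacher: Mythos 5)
Your proposal is correct and follows essentially the same route as the paper's proof: the same orbit-partition map, the same imprimitivity argument plus the Jordan--Marggraff theorem to show the proper part maps to the proper part, and the same application of the Quillen fiber lemma using the greatest element $Q_\pi$ of each fiber. The only cosmetic difference is that you justify the non-splitting of the class of $p$-cycles in $A_n$ explicitly (via an odd permutation in the centralizer), where the paper simply cites it as well known.
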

   \begin{proof}
     Let $Q$ be a subrack of $R_{n,p}$. Set $H = \langle Q \rangle$. 
     It is well known that for odd $p$ the $p$-cycles in $A_n$ form a 
     conjugacy class.  Thus if $H = A_n$ then by \ref{le:groupcase} 
     we have $Q = R_{n,p}$. 

     \noindent {\sf Claim:} If $H = \langle Q \rangle < A_n$ then 
       $H$ does not act transitively on $[n]$. 
     
     \noindent $\triangleleft$
       Assume $H$ acts transitively on $[n]$. First, consider the case 
       that $H$ acts imprimitively. Let $B_1|
       \cdots |B_r$ be a set partition of $[n]$ stabilized by 
       $H$ where $r > 1$ and $|B_1| > 1$. If $\pi$ is a $p$-cycle in 
       $Q \subseteq H$ then the cyclic group $\langle \pi \rangle$ of order $p$
       generated by $\pi$ either stabilizes all sets 
       $B_i$ or there is an orbit $B_{i_1},\ldots,B_{i_p}$ of size $p$ on 
       the blocks. But the latter contradicts the fact that $\pi$ fixes all
       but $p$ elements in $[n]$. Thus all generators of $H$ fix the blocks of
       imprimitivity. But then $H$ cannot be transitive.
       Hence $H$ acts primitively on $[n]$. But by $p < n-2$ it follows from a 
       result by Jordan and Marggraff (see \cite[Thm. 13.9]{Wielandt}) 
       that $H$ cannot contain a $p$-cycle
       which contradicts the fact that $H$ is generated by $p$-cycles. 
       Hence $H$ cannot act transitively on $[n]$. $\triangleright$ 
  
       Consider the 
       map $\phi :  \srl(R) \rightarrow \Pi_n$ that sends $Q$ to the 
       partition of $[n]$
       given by the orbits of $H = \langle Q \rangle$. By the above reasoning 
       we know that $\phi$ restricts to a map between the proper parts of 
       $\srl(R)$ and $\Pi_n$. 
 
       \noindent {\sf Claim:} The image of $\phi$ is $\Pi_{n,p}$.

       \noindent $\triangleleft$
       Let $B_1|\cdots |B_r = \phi(Q)$ for some $Q \in \srl(R) \setminus 
       \{ \emptyset, R\}$. Let $i$ be such that
       $1 \leq |B_i| < p$. Then the elements of $B_i$ must be fixed by any
       $p$-cycle in $H$. But then $H$ must fix $B_i$ and  hence $|B_i| = 1$. 
       Thus $\phi(\srl(R)) \subseteq \Pi_{n,p}$. 
       Conversely, let $B_1| \cdots |B_r \in \Pi_{n,p}$. 
       For any $B \subseteq [n]$ denote by $A_B$ the
       alternating group on the elements of $B$. Set
       $H = A_{B_1} \times \cdots \times A_{B_r}$ and let 
       $Q_H$ be the set of all $p$-cycles contained in $H$. Then $Q_H$
       is the union of $p$-cycles with support in some $B_i$. Since the
       $p$-cycles with support in different $B_i$ commute it follows
       that $Q_H$ is a subrack with $\phi(Q_H) = B_1 |\cdots |B_r$.
       Thus the image of $\phi$ is $\Pi_{n,p}$. $\triangleright$

       Now for every 
       $\tau = B_1 |\cdots |B_r \in \Pi_{n,p}$ the lower fiber
       $\phi((\Pi_{n,p})_{\leq \tau})$ has a unique maximal element 
       $Q_H$ the set of $p$-cycles contained in 
       $H = A_{B_1} \times \cdots \times A_{B_r}$.
       Thus each fiber is a cone and hence contractible.
       Hence, by the Quillen fiber lemma \cite{Q} $\phi$ induces a homotopy
       equivalence between $\Delta(\srl(R))$ and $\Delta(\Pi_{n,p})$.
  \end{proof}
  
  \begin{Corollary}
     If $p$ is an odd prime and $p < n-2$, then 
     $\Delta(\srl(R_{n,p}))$ is homotopy equivalent to a
     wedge of spheres. If $2p \leq n$ then there are spheres
     of several dimensions in the wedge.
  \end{Corollary}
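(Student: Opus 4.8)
The plan is to pass through Proposition~\ref{pro:pcycles} and then read off the answer from the known topology of the $k$-equal partition lattice. By that proposition, $\Delta(\srl(R_{n,p}))$ is homotopy equivalent to $\Delta(\Pi_{n,p})$, so it suffices to understand the order complex of the $p$-equal partition lattice. The first assertion is then immediate from the work of Björner and Welker \cite{BjoernerWelker}, who prove that $\Delta(\Pi_{n,k})$ is homotopy equivalent to a wedge of spheres for every $k$; specializing $k=p$ shows that $\Delta(\srl(R_{n,p}))$ is homotopy equivalent to a wedge of spheres.

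For the second assertion I would invoke the finer information in \cite{BjoernerWelker}, namely the precise list of dimensions in which $\Delta(\Pi_{n,p})$ carries (reduced) homology. There the nonzero Betti numbers occur exactly in the dimensions
$$ d_t = n-3-t(p-2), \qquad 1\le t\le \lfloor n/p\rfloor , $$
where $t$ may be read as the number of non-singleton blocks of the partitions supporting the corresponding cycles, and each such Betti number is strictly positive. The index $t=1$ is always admissible (since $p<n$) and contributes spheres of dimension $d_1=n-p-1$; this is the top dimension of the complex, realized by the chains obtained from a single block grown one point at a time from size $p$ up to size $n-1$.

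The role of the hypothesis $2p\le n$ is precisely to make the index $t=2$ admissible: when $2p\le n$ one can form partitions with two blocks of size $p$, and correspondingly $\Delta(\Pi_{n,p})$ also carries homology in dimension $d_2=n-2p+1$. Since $p$ is an odd prime we have $d_1-d_2=p-2\ge 1$, so these two summands live in distinct dimensions and the wedge contains spheres of at least two different dimensions, as claimed. (More generally each admissible $t$ yields a distinct dimension $d_t$, since $d_t$ is strictly decreasing in $t$, so the number of distinct sphere dimensions is $\lfloor n/p\rfloor$.)

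I expect the main obstacle to be bookkeeping rather than conceptual. One must match the indexing conventions of \cite{BjoernerWelker} to confirm both the dimension formula $d_t=n-3-t(p-2)$ and, more importantly, the strict positivity of the relevant Betti numbers for $t=1$ and $t=2$ under the stated hypotheses. As independent safeguards one can verify the extreme value $d_1=n-p-1$ directly, as the length of a longest chain in $\Pi_{n,p}$ (a single block cannot grow in steps smaller than creating a $p$-block and thereafter absorbing one point at a time), and one can check a small instance such as $n=6$, $p=3$, where $\Pi_{6,3}$ has one-block and two-block strata producing homology in dimensions $d_1=2$ and $d_2=1$, matching the formula.
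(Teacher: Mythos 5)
Your proposal is correct and takes essentially the same approach as the paper: reduce to $\Pi_{n,p}$ via \ref{pro:pcycles} and then invoke \cite[Thm.~1.5]{BjoernerWelker}, whose dimension data $n-3-t(p-2)$ for $1 \le t \le \lfloor n/p \rfloor$ is exactly what you spell out. The paper simply cites that theorem and says the assertion follows, leaving implicit the bookkeeping (admissibility of $t=2$ when $2p\le n$, and $d_1-d_2=p-2\ge 1$) that you carry out explicitly and correctly.
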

  \begin{proof}
    By \ref{pro:pcycles} $\Delta(\srl(R))$ is homotopy equivalent to 
    $\Delta(\Pi_{n,p})$. By \cite[Thm. 1.5]{BjoernerWelker}
    the assertion then follows.
  \end{proof}

\section{Classifying group classes through their subrack lattices}
  \label{sec3}

  In this section we provide the proof of \ref{main}.
  Throughout this section we will frequently use for a rack $R$ and
  subracks $S$, $T$ of $R$ the notation 
  $[S,T]_\rk$ to denote an interval $\{ Q~:~S \leq Q \leq R\}$ in 
  $\srl(R)$. 

  \subsection{Auxiliary results}
    We begin with a simple observation. The atoms in $\srl(G)$ are the 
    one-element subsets of $G$. Thus, for any $R \in \srl(G)$, $|R|$ is
    the number of atoms in the interval $[\emptyset,R]_\rk$. 
    In particular, we have the following result.

    \begin{Lemma} \label{size}
      If $\psi:\srl(G) \rightarrow \srl(H)$ is an isomorphism
      of lattices, then $|\psi(R)|=|R|$ for all $R \in \srl(G)$.
    \end{Lemma}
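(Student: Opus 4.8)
The plan is to exploit the observation recorded just above the statement: because $G$ with conjugation is a quandle, every singleton $\{g\}$ is a subrack, and these singletons are exactly the atoms of $\srl(G)$. Indeed, any nonempty subrack $Q$ contains some element $g$, and then $\{g\} \leq Q$, so the minimal nonempty subracks are precisely the one-element sets; hence the atoms of $\srl(G)$ are the $\{g\}$ with $g \in G$. The first step is to make this identification precise and to draw its immediate consequence: for any $R \in \srl(G)$, the atoms lying weakly below $R$ are exactly the singletons $\{g\}$ with $g \in R$, so their number equals $|R|$.

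The second step is purely order-theoretic. Since $\srl(G)$ and $\srl(H)$ each have $\emptyset$ as unique minimal element (by \ref{le:lattice}), any lattice isomorphism $\psi$ satisfies $\psi(\emptyset) = \emptyset$, and therefore restricts to a lattice isomorphism of intervals $[\emptyset, R]_\rk \to [\emptyset, \psi(R)]_\rk$. Being an order isomorphism, $\psi$ carries the atoms of the first interval bijectively onto the atoms of the second. Combining this with the first step, the number of atoms below $R$, which is $|R|$, equals the number of atoms below $\psi(R)$, which is $|\psi(R)|$. This yields $|\psi(R)| = |R|$ for every $R \in \srl(G)$, as required.

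There is no serious obstacle here; the only point demanding a moment's care is the identification of the atoms, which rests on the quandle identity $g \triangleright g = g$ that guarantees every singleton is a subrack. This is exactly the feature special to the group case and need not hold for a general rack. Everything else is the standard fact that a lattice isomorphism preserves the minimal element, maps intervals isomorphically, and respects the covering relation and hence the property of being an atom.
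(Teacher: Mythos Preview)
Your proof is correct and follows exactly the same approach as the paper: the paper notes just before the lemma that the atoms of $\srl(G)$ are the one-element subsets, so $|R|$ equals the number of atoms in $[\emptyset,R]_\rk$, and the lemma is recorded as an immediate consequence. Your write-up simply fills in the routine details of this observation.
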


    For $R \in \srl(G)$, we define the {\it closure} $\ov{R}$ in 
    $\srl(G)$ to be the union of all conjugacy classes $C$ of $G$ 
    satisfying $C \cap R \neq \emptyset$. If there is any ambiguity 
    about the ambient group $G$, we will write $\ov{R_G}$ for $\ov{R}$.  
    The map on $\srl(G)$ sending $R$ to $\ov{R}$ is idempotent, increasing 
    and order preserving and thus is a closure operation as defined by Rota 
    (see \cite[p. 1852]{Bjoerner} and the proof of \ref{pr:grouprack}).
    We will say that $R$ is {\it closed} if $R=\ov{R}$.  
    So, $R$ is closed if and only if $R$ is a union of $G$-conjugacy classes.

    For any lattice $L$, we define $\il(L)$ to be the subposet of $L$ 
    consisting of those elements that are the meet of a set of coatoms of $L$.

    The following is an immediate consequence of \ref{le:maxgrrack}.

    \begin{Lemma} \label{coatom}
      Let $\Cc_1,\ldots,\Cc_k$ be the conjugacy classes in $G$.  
      \begin{enumerate}
        \item A subrack $R \in \srl(G)$ lies in $\il(\srl(G))$ if and only 
           if $R$ is closed.
       \item The lattice $\il(\srl(G))$ is isomorphic with the Boolean 
           algebra $B_k$.
    \end{enumerate}
  \end{Lemma}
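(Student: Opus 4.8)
The plan is to read off both statements from the explicit description of the coatoms of $\srl(G)$ given in \ref{le:maxgrrack}, combined with the fact established in the proof of \ref{le:lattice} that the meet of subracks is their set-theoretic intersection. By \ref{le:maxgrrack} the coatoms of $\srl(G)$ are exactly the subracks $M_i := G \setminus \Cc_i = \bigcup_{j \neq i} \Cc_j$ obtained by deleting a single conjugacy class, one for each $i \in \{1,\ldots,k\}$.

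First I would compute an arbitrary meet of coatoms. For $S \subseteq \{1,\ldots,k\}$, since meets in $\srl(G)$ are intersections, one has $\bigwedge_{i \in S} M_i = \bigcap_{i \in S}(G \setminus \Cc_i) = G \setminus \bigcup_{i \in S}\Cc_i = \bigcup_{j \notin S}\Cc_j$, where the empty meet ($S = \emptyset$) is taken to be the top element $G$. This already gives (1): every meet of coatoms is a union of conjugacy classes and hence closed, while conversely any union $\bigcup_{j \in T}\Cc_j$ of conjugacy classes is realized as $\bigwedge_{i \notin T} M_i$ and so lies in $\il(\srl(G))$. Using the identification of ``closed'' with ``union of conjugacy classes'' recorded just before the statement, this shows $\il(\srl(G))$ is precisely the set of closed subracks.

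For (2) I would introduce the map $\beta \colon B_k \to \il(\srl(G))$ sending a subset $T$ to $\bigcup_{j \in T}\Cc_j$. The computation above shows $\beta$ is a bijection onto $\il(\srl(G))$. Because the conjugacy classes are nonempty and pairwise disjoint, $T \subseteq T'$ holds if and only if $\beta(T) \subseteq \beta(T')$, so both $\beta$ and $\beta^{-1}$ preserve order; a bijection of posets whose inverse is order preserving is a lattice isomorphism, yielding $\il(\srl(G)) \cong B_k$.

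The only point needing care, and the closest thing to an obstacle, is the bookkeeping around the complementation $S \leftrightarrow \{1,\ldots,k\}\setminus S$ and the convention that the empty meet equals the top element $G$ (needed so that $G$ itself counts as a meet of coatoms and $\il(\srl(G))$ has the full $2^k$ elements). This is exactly why the statement is flagged as immediate: both parts collapse to the observation that intersecting the one-class-deleted coatoms reproduces precisely the lattice of unions of conjugacy classes, which is manifestly Boolean of rank $k$.
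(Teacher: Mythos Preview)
Your argument is correct and is precisely the unpacking the paper has in mind: the lemma is stated there as ``an immediate consequence of \ref{le:maxgrrack},'' and your proof does exactly that, using the coatom description from \ref{le:maxgrrack} together with the fact (from \ref{le:lattice}) that meets are intersections. There is nothing to add.
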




  Our next goal is to identify 
  non-normal maximal 
  subgroups of solvable groups $G$ in terms of
  the combinatorial structure of $\srl(G)$.

  We define $\nnm(G)$ to be the set of all $R \in \srl(G)$ satisfying all 
  of 
  \begin{itemize} 
    \item[(A)] $\ov{R} \neq R$, 
    \item[(B)] $[R,G]_\rk=\{R\} \bigcup [\ov{R},G]_\rk$, 
    \item[(C)] every element of $[\ov{R},G]_\rk$ is closed,
    \item[(D)] $\il([\emptyset,\ov{R}]_\rk)$ is not a Boolean algebra. 
  \end{itemize}
  Note that we can identify $\nnm(G)$ using only the combinatorial structure 
  of $\srl(G)$.  In particular, we have the following result.

  \begin{Lemma} \label{ord}
    Let $\psi:\srl(G) \rightarrow \srl(H)$ be an isomorphism.  
    Then $\psi(\nnm(G))=\nnm(H)$.
  \end{Lemma}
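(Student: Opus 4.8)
The plan is to show that each of the defining conditions (A)--(D) of $\nnm(G)$ is expressible purely in terms of the lattice structure of $\srl(G)$. Since $\psi$ is a lattice isomorphism, it carries the bottom and top of $\srl(G)$ to those of $\srl(H)$, carries coatoms to coatoms and meets to meets, and restricts to an isomorphism on each interval. Once every condition is phrased lattice-theoretically, it will follow immediately that $R \in \nnm(G)$ if and only if $\psi(R) \in \nnm(H)$, which is exactly the assertion $\psi(\nnm(G)) = \nnm(H)$.

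The main step, on which everything else rests, is to check that the closure operation $R \mapsto \ov{R}$ is itself lattice-theoretic. By \ref{le:maxgrrack} the coatoms of $\srl(G)$ are precisely the subracks obtained by deleting a single conjugacy class, and by \ref{le:lattice} the meet in $\srl(G)$ is intersection. A short computation then shows that the meet of all coatoms lying above $R$ is exactly the union of the conjugacy classes meeting $R$, that is, $\ov{R}$. In other words, $\ov{R}$ is the meet of all coatoms $\geq R$, a description using only the order relation. Consequently $\psi(\ov{R}) = \ov{\psi(R)}$ for every $R$, and $\psi$ carries closed subracks to closed subracks; equivalently, by \ref{coatom}(1) the closed subracks are exactly the elements of $\il(\srl(G))$, so preservation of closedness is also visible directly from the fact that $\psi$ preserves $\il$.

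With the closure operation under control the remaining conditions transport routinely. Condition (A), that $R$ is not closed, is preserved because $\psi(\ov{R}) = \ov{\psi(R)}$. For (B), $\psi$ maps $[R,G]_\rk$ isomorphically onto $[\psi(R),H]_\rk$ and $[\ov{R},G]_\rk$ onto $[\ov{\psi(R)},H]_\rk$, so the equality $[R,G]_\rk = \{R\} \cup [\ov{R},G]_\rk$ transports verbatim. Condition (C), that every element of $[\ov{R},G]_\rk$ is closed, is preserved since $\psi$ restricts to an isomorphism of this interval with $[\ov{\psi(R)},H]_\rk$ and respects closedness. Finally, for (D), $\psi$ maps $[\emptyset,\ov{R}]_\rk$ isomorphically onto $[\emptyset,\ov{\psi(R)}]_\rk$ (using $\psi(\emptyset)=\emptyset$ and $\psi(\ov{R})=\ov{\psi(R)}$); since $\il(\,\cdot\,)$ is defined purely from meets of coatoms within the interval, $\psi$ carries $\il([\emptyset,\ov{R}]_\rk)$ isomorphically onto $\il([\emptyset,\ov{\psi(R)}]_\rk)$, and being a Boolean algebra is an isomorphism invariant, so (D) is preserved as well.

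The one genuine point requiring verification is thus the lattice-theoretic description of $\ov{R}$ as the meet of the coatoms above it; this is where \ref{le:maxgrrack} and \ref{le:lattice} do the real work. Once it is in place, each of (A)--(D) transports mechanically under $\psi$, and the equality $\psi(\nnm(G)) = \nnm(H)$ follows.
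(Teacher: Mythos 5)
Your proof is correct and takes essentially the same route as the paper, which states \ref{ord} without a written proof, relying on the preceding remark that conditions (A)--(D) can be identified from the combinatorial structure of $\srl(G)$ alone. Your write-up supplies exactly the detail that remark presupposes --- the key observation that $\ov{R}$ is the meet of all coatoms above $R$ (via \ref{le:maxgrrack} and \ref{coatom}), so that $\psi(\ov{R})=\ov{\psi(R)}$ and each of (A)--(D) transports under $\psi$.
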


  \begin{Lemma} \label{pnnm1}
    The following claims hold. 
    \begin{enumerate} 
      \item If $R \in \nnm(G)$ then $R$ is a non-normal subgroup of $G$. 
      \item If $M$ is a non-normal maximal subgroup of $G$, then $M \in \nnm(G)$. 
    \end{enumerate}
  \end{Lemma}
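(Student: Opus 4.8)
I will establish the two assertions separately, beginning with (2), since the subracks it produces organize the proof of (1).

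\emph{Proof of (2).} Let $M$ be a non-normal maximal subgroup. Condition (A) is immediate: a normal subgroup is a union of conjugacy classes, so the non-normal subgroup $M$ is not closed and $\ov M\neq M$. For (B) and (C) the engine is \ref{le:groupcase} combined with maximality. If $Q$ is a subrack with $Q\supsetneq M$, then $\langle Q\rangle$ is a subgroup strictly larger than $M$, hence $\langle Q\rangle=G$, so by \ref{le:groupcase} $Q$ is a union of conjugacy classes; being closed and containing $M$ it contains every class meeting $M$, i.e. $Q\supseteq\ov M$. This gives $[M,G]_\rk=\{M\}\cup[\ov M,G]_\rk$, which is (B), and the same computation applied to any $Q\in[\ov M,G]_\rk$ (note $Q\supseteq\ov M\supsetneq M$) shows $Q$ is closed, which is (C). Restricting this argument to subracks of $\ov M$ also shows that $M$ is itself a (non-closed) coatom of $\srl(\ov M)=[\emptyset,\ov M]_\rk$.

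The remaining condition (D) is where I expect the real difficulty. I would deduce it from the principle that, for a union of conjugacy classes $S$ with $e\in S$, the lattice $\il(\srl(S))$ is Boolean if and only if $S$ is a subgroup of $G$. The ``if'' direction is easy: when $S$ is a subgroup, $\srl(S)$ is the subrack lattice of the group $S$, whose coatoms are the class-removals by \ref{le:maxgrrack}, so $\il(\srl(S))$ is Boolean by \ref{coatom}. Applied to $S=\ov M$, the ``only if'' direction gives (D), because $\ov M$ is not a subgroup: by \ref{lem:avoid} the proper subgroup $M$ misses some conjugacy class, so $\ov M\neq G$, whereas $\langle\ov M\rangle=G$ by maximality, and a subgroup $\ov M$ would then have to equal $G$. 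To prove this ``only if'' direction for $\ov M$ I would exhibit a non-distributive sublattice of $\il(\srl(\ov M))$. Since $M$ is non-normal and maximal it has at least three distinct $G$-conjugates $M_1,M_2,M_3,\dots$ (an index-two subgroup would be normal), each again a non-closed coatom of $\srl(\ov M)$. Distinct conjugates satisfy $\langle M_i,M_j\rangle=G$, so the subrack they generate is closed and, lying in $\ov M$ and meeting $M$, equals $\ov M$; hence $M_i\vee M_j=\ov M$ in $\il(\srl(\ov M))$. Then $M_1\wedge(M_2\vee M_3)=M_1$, while $(M_1\wedge M_2)\vee(M_1\wedge M_3)$ is a join of proper subracks of $M_1$, which I expect to be strictly smaller than $M_1$, yielding a copy of $M_3$ (or $N_5$) and so showing $\il(\srl(\ov M))$ is not distributive, hence not Boolean. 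Verifying this strict inequality uniformly over all non-normal maximal $M$ --- equivalently, controlling the pairwise intersections of conjugates of $M$ inside $\ov M$ --- is the main obstacle of the whole lemma.

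\emph{Proof of (1).} Let $R\in\nnm(G)$. First I would show $e\in R$: otherwise $R\cup\{e\}$ is a subrack properly containing $R$, so (B) forces $\ov R\subseteq R\cup\{e\}$, and since $e\notin\ov R$ this gives $\ov R=R$, against (A). Put $H=\langle R\rangle$; by \ref{le:groupcase}, $R$ is a union of $H$-conjugacy classes, and it suffices to prove $R=H$. Suppose $R\subsetneq H$. For every $H$-class $L\subseteq H\setminus R$ the set $R\cup L$ is a subrack (a union of $H$-classes) properly containing $R$, so (B) gives $\ov R\setminus R\subseteq L$; as $\ov R\setminus R\neq\emptyset$ by (A), there is exactly one such class $L_0$, whence $R=H\setminus L_0$ and $\ov R\subseteq H$. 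Now $H\in[\ov R,G]_\rk$, so (C) forces $H$ to be closed, i.e. $H\trianglelefteq G$; consequently the $G$-class of any element of $\ov R\cap L_0$ lies in $H$, contains $L_0$, and lies in $\ov R$, forcing $\ov R=H$. But then $\srl(\ov R)=\srl(H)$ is the subrack lattice of the group $H$, so $\il(\srl(\ov R))$ is Boolean by \ref{le:maxgrrack} and \ref{coatom}, contradicting (D). Therefore $R=H$ is a subgroup, and it is non-normal by (A), since a normal subgroup would be closed. Thus the only genuinely hard ingredient in (1) is once more the Boolean dichotomy underlying (D).
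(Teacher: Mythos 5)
Your treatment of part (1) is correct and is in substance the paper's own argument (assume $R$ is not a subgroup, use (B) to force $\langle R\rangle$ to exceed $R$ by a single class, identify $\ov{R}=\langle R\rangle$, and contradict (D) via \ref{coatom}), and your verification of conditions (A), (B), (C) in part (2) coincides with the paper's (maximality plus \ref{le:groupcase}). The genuine gap is condition (D), which you have correctly located but not closed: your argument hinges on the unverified claim that for suitable conjugates $M_1,M_2,M_3$ of $M$ one has $(M_1\wedge M_2)\vee(M_1\wedge M_3)\neq M_1$ inside $\il([\emptyset,\ov{M}]_\rk)$. Since joins in $\il$ are meets of coatoms, this amounts to producing a coatom of $[\emptyset,\ov{M}]_\rk$ containing $(M_1\cap M_2)\cup(M_1\cap M_3)$ but not containing $M_1$; the coatoms of this interval are not classified (besides closed subracks they include non-closed ones, such as the $M_i$ themselves), and nothing in your proposal produces such a coatom. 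So part (2) remains unproved.

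The paper's proof shows why this step is genuinely expensive. It sets $C=\Core_G(M)$, picks a $G$-class $X\subseteq\ov{M}$ with $C\cap X=\emptyset$, and splits into two cases. If $C\neq\ov{M}\setminus X$, the certificate of non-Booleanness is not a distributivity failure among pairwise intersections at all: there is a second class $X'\subseteq\ov{M}$ disjoint from $C$, no conjugate of $M$ can contain $X'$, and hence $C$ is the meet of a \emph{proper} subfamily of the coatoms of $[C,\ov{M}]_\rk$ --- impossible in a Boolean algebra, where the meet of a proper subfamily of coatoms lies strictly above the bottom. If instead $C=\ov{M}\setminus X$, the paper invokes Burnside's Normal $p$-Complement Theorem, the odd order theorem (or Baddeley--Lucchini), and a representation-theoretic fact from Manz--Wolf to prove that $G/C$ is dihedral of order $2p$ with $p\geq 3$; only then does your hoped-for diamond configuration ($\geq 3$ conjugates pairwise intersecting in $C$, giving a sublattice $M_3$) actually materialize. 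In other words, the configuration your sublattice argument presupposes is only known to exist after serious group-theoretic work, and in the complementary case the paper must argue by a different mechanism. What you flagged as ``the main obstacle'' is indeed the entire content of (D), and it cannot be dispatched by the lattice-theoretic expectation alone.
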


  \begin{proof}
    Assume for contradiction that $R \in \nnm(G)$ is not a subgroup of $G$.  
    Let $N=\la R \ra$. Then every element of $[R,N]_\rk$ is closed in 
    $\srl(N)$. It follows from \ref{coatom}, applied to $N$,  
    that $[R,N]_\rk$ is a 
    Boolean algebra. As the only element of $[R,G]_\rk$ covering $R$ is 
    $\ov{R}$, we see that $\ov{R}=N$. Applying \ref{coatom} to $N$ again, 
    we see 
    that $\il([\emptyset,\ov{R}]_\rk)$ is a Boolean algebra. Thus $R$ 
    does not satisfy (D), contrary to our assumption. Therefore, $R \leq G$.  
    Also, $R \neq \ov{R}$, as $R$ is not closed. We have proved (1).

    Say $M$ is a non-normal maximal subgroup of $G$. Then $M$ is strictly 
    contained in the union of all conjugacy classes of $G$ that intersect 
    $M$ non-trivially and hence satisfies (A). If $g \in G \setminus M$, then 
    $\la M,g \ra=G$. It follows that every subrack of $G$ strictly 
    containing $M$ is closed and thus also contains $\ov{M}$. Thus $M$ 
    satisfies (B) and (C).  


   Let $C$ be the core of $M$ in $G$.  As every $G$-conjugate of $M$ is 
   covered by $\ov{M}$ in $\srl(G)$, we see that 
   $C \in \il([\emptyset,\ov{M}]_\rk)$. As $C$ is a closed proper subrack of $G$, 
   there is some $G$-conjugacy class $X$ such that 
   $C \subseteq \ov{M} \setminus X$.  

   Assume first that $C \neq \overline{M} \setminus X$.  As both $C$ and
   $\overline{M}$ are closed, there is some $G$-conjugacy class $X^\prime
   \neq X$ such that $X^\prime \subseteq \overline{M}$ and 
   $X^\prime \cap C=\emptyset$. 
   No $G$-conjugate of $M$ can contain $X^\prime$, as otherwise $C$ would
   contain $X^\prime$.  Therefore, no coatom of $[C,\overline{M}]_{\rk}$ containing $M
   \setminus X$ is a $G$-conjugate of $M$.  
   Since $C$ is the intersection of the $G$-conjugates of $M$ it follows 
   that $C$ is the meet of a proper subset
   of the set of coatoms of $[C,\overline{M}]_{\rk}$.  Therefore,
   $\il([\emptyset,\overline{M}]_{\rk})$ is not a Boolean algebra.



    Assume now that $C=\ov{M} \setminus X$. Then every non-identity element 
    of $M/C$ is of the form $Cx$ with $x \in X$. It follows that all such 
    elements are conjugate in $G/C$. Thus all such elements have the same 
    order, which must be prime. Therefore, there is a prime $p$ such that 
    $M/C$ is a $p$-group of exponent $p$. We claim that $p=2$. Indeed, 
    assume for contradiction that $p>2$ and let $Cz$ be a non-identity element  
    of $Z(M/C)$. Then $Cz$ is conjugate to $Cz^{-1}$ in $G/C$, but not in 
    $M/C$. It follows that $\langle Cz \rangle$ is normal in $G/C$, as it is 
    normalized by both the maximal subgroup $M/C$ and by some element not in 
    $M/C$. This contradicts the fact that $M/C$ has trivial core in $G/C$.

    We see now that $M/C$ is an elementary abelian $2$-group. As $M$ is 
    maximal and non-normal in $G$, it follows that $M/C$ is Sylow 
    $2$-subgroup of $G/C$ and $N_{G/C}(M/C)=M/C$. By Burnside's Normal 
    $p$-Complement Theorem (see for example \cite[Theorem 7.2.1]{KuSt}), 
    $G/C$ contains a normal complement $N/C$ to $M/C$. As $M/C$ is maximal 
    in $G/C$, we see that $N/C$ is characteristically simple and thus the 
    direct product of pairwise isomorphic simple groups (see for example 
    \cite[1.7.3]{KuSt}). Now by either the Feit-Thompson odd order Theorem 
    or \cite[Lemma 3.24]{BaLu}, $N/C$ is an elementary abelian $p$-group for 
    some odd prime $p$.  

    The conjugation action of $M/C$ on $N/C$ determines a linear 
    representation $\phi$ of $M/C$ on the $\FF_p$-vector space $N/C$.  
    As $N/C$ is minimal normal in $G/C$ and $M/C$ has trivial core in 
    $G/C$, $\phi$ is faithful and irreducible.  It follows $|M/C|=2$ (see for 
    example \cite[0.5]{MaWo}) and in turn that $|N/C|=p$. As $M$ is not normal 
    in $G$, we see that $G/C$ is dihedral of order $2p$. Thus $M$ has 
    $p \geq 3$ conjugates in $G$, and the intersection of any two of these 
    conjugates is $C$. It follows that $\il([C,\ov{M}]_\rk)$ is not a 
    Boolean algebra. Thus $M$ satisfies (D) and (2) holds.

  \end{proof}

  \begin{Corollary} \label{maxcor}
    If $M$ is a maximal element of $\nnm(G)$ with respect to the order 
    inherited from $\srl(G)$, then $M$ is a non-normal subgroup of $G$ 
    and every subgroup of $G$ properly containing $M$ is normal in $G$.
  \end{Corollary}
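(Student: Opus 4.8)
The first assertion is immediate: a maximal element of $\nnm(G)$ is in particular an element of $\nnm(G)$, so \ref{pnnm1}(1) already tells us that $M$ is a non-normal subgroup of $G$. The real content is the second assertion, and the plan is to read it straight off the defining conditions (A)--(D) of $\nnm(G)$. I would isolate two elementary facts first. Every subgroup $K \leq G$ is a subrack of $G$: it is closed under $a \triangleright b = aba^{-1}$ and under the operation sending $a,b$ to $a^{-1}ba$, so $K \in \srl(G)$. And a subgroup is \emph{closed} in the sense of this section, i.e.\ a union of $G$-conjugacy classes, if and only if it is normal in $G$.

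Granting these, the argument runs as follows. Let $K$ be any subgroup of $G$ with $M < K \leq G$. By the first fact $K \in \srl(G)$, hence $K \in [M,G]_\rk$ with $K \neq M$. Condition (B) says $[M,G]_\rk = \{M\} \cup [\ov{M},G]_\rk$, so $K$ must already lie in $[\ov{M},G]_\rk$. Condition (C) then forces $K$ to be closed, that is, a union of $G$-conjugacy classes. By the second fact $K$ is normal in $G$, which is exactly what we want.

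What I want to flag is that this reasoning uses only $M \in \nnm(G)$, through conditions (B) and (C); the maximality of $M$, as well as conditions (A) and (D), are not actually needed for the stated conclusion. So I do not expect a genuine obstacle here --- the only points that require care are reading the set-equality in (B) correctly (every subrack strictly above $M$ already contains $\ov{M}$) and the identification of closed subgroups with normal subgroups. Maximality of $M$ only enters later, when one combines this corollary with \ref{pnnm1}(2) to recognize these $M$ as the non-normal \emph{maximal} subgroups of $G$, which is the role the corollary is designed to play rather than part of its own proof.
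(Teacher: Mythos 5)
Your proof is correct and is essentially the paper's own argument: the paper's proof is the one-liner ``this follows directly from \ref{pnnm1}'', which unwinds to exactly what you wrote --- \ref{pnnm1}(1) for the first assertion, then conditions (B) and (C) of the definition of $\nnm(G)$ (together with the facts that subgroups are subracks and that closed subgroups are precisely normal subgroups) for the second. Your side observation that maximality of $M$ is never used is also accurate; indeed it shows that $\nnm(G)$ is an antichain in $\srl(G)$, so every element of $\nnm(G)$ is automatically maximal.
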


  \begin{proof}
    This follows directly from \ref{pnnm1}.
  \end{proof}

  We will prove something stronger than \ref{maxcor} under the assumption 
  that $G$ is solvable.

  \begin{Lemma} \label{diffclo}
    Let $L,M$ be non-conjugate maximal subgroups of the solvable group $G$.  
    Then $\ov{L} \neq \ov{M}$.
  \end{Lemma}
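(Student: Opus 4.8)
The plan is to argue by induction on $|G|$, reducing to the case in which both $L$ and $M$ are core-free, so that $G$ acts primitively on $G/M$ and on $G/L$. First I would record the reduction step. If $N$ is a normal subgroup of $G$ with $N \le L \cap M$, then (since $N$ is then contained in $\Core_G(L)$ and $\Core_G(M)$) the sets $\ov L$ and $\ov M$ are unions of cosets of $N$, their images in $G/N$ are exactly $\ov{L/N}$ and $\ov{M/N}$, and $L,M$ are conjugate in $G$ iff $L/N,M/N$ are conjugate in $G/N$. Hence if $\ov L = \ov M$ and $\Core_G(L)\cap\Core_G(M)\neq 1$, choosing a minimal normal subgroup $N$ of $G$ inside that intersection lets the inductive hypothesis applied to $G/N$ finish. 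So it suffices to treat $\Core_G(L)\cap\Core_G(M)=1$.

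In that situation I would first show both cores are in fact trivial. Suppose $N\le\Core_G(M)$ is a minimal normal subgroup of $G$. Then $N\cap\Core_G(L)=1$, so $N\not\le L$, and since $N$ is minimal normal and $L$ is maximal we get $N\cap L=1$. But $\ov M$ is a union of $N$-cosets containing $1$, hence contains $N=\ov M\cap N\subseteq\ov L$; every $n\in N$ then lies in some conjugate $L^g$, forcing $n\in L^g\cap N=(L\cap N)^g=1$. Thus $N=1$, a contradiction, so $\Core_G(M)=\Core_G(L)=1$. Now $G$ is solvable and primitive, so it has a unique minimal normal subgroup $N$, which is elementary abelian, self-centralizing, and complemented, and $L,M$ are both complements to $N$, say $G=N\rtimes M$. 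Writing $H=M$ and identifying complements to $N$ with classes in $H^1(H,N)$, the subgroup $L$ corresponds to a class $[\beta]$, with $L$ conjugate to $M$ iff $[\beta]=0$. A direct computation shows $\ov L=\ov M$ is equivalent to $\beta(h)\in(1-h)N$ for all $h\in H$; that is, every element of $L$ is $G$-conjugate to an element of $M$, equivalently $[\beta]$ restricts to $0$ on every cyclic subgroup of $H$. So the lemma reduces to a local-to-global statement: a class in $H^1(H,N)$ that is trivial on every cyclic subgroup must vanish.

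The hard part is this vanishing. Since $N$ is an elementary abelian $p$-group, $H^1(H,N)$ is $p$-torsion and restriction to a Sylow $p$-subgroup $S$ is injective, and local triviality passes to $S$; this reduces to $H$ a $p$-group. Local triviality also kills $\beta$ on the kernel of the action on $N$, so $\beta$ is inflated from a faithful quotient, and I may assume $S$ acts faithfully, hence unipotently. I would then induct on $|S|$ using a central subgroup $Z$ of order $p$: local triviality at a generator of $Z$ places $[\beta]$ in the image of inflation from $H^1(S/Z,N^Z)$, and one checks the inflated class is again locally trivial by applying the inductive hypothesis to each proper subgroup $\langle g,Z\rangle$. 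This succeeds unless $S$ is elementary abelian of rank $2$, so the genuine obstacle is the base case $S\cong(\ZZ/p)^2$, which must be handled by a direct analysis of $H^1((\ZZ/p)^2,N)$ using the three local conditions at $a$, $b$ and $ab$ simultaneously. I expect this elementary-abelian base case to carry the main difficulty of the proof.
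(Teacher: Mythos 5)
Your reduction to the primitive case is correct, and so is the cohomological translation: with $G=N\rtimes H$ primitive solvable and $N$ its self-centralizing minimal normal subgroup, complements of $N$ correspond to classes in $H^1(H,N)$, and $\ov{L}=\ov{M}$ does become the statement that the class of $L$ restricts to zero on every cyclic subgroup of $H$. The fatal problem is the statement you reduce everything to, namely that a class in $H^1(H,N)$ which is trivial on every cyclic subgroup must vanish: this is false for $p$-groups in general, and in particular your base case $H\cong(\ZZ/p)^2$, which you correctly identify as carrying all the weight, is false. Take $p=2$, $H=\langle a,b\rangle\cong\ZZ/2\times\ZZ/2$, $R=\FF_2[H]$, $\alpha=a+1$, $\tau=b+1$, let $W\leq R^2$ be the submodule generated by $(\tau,\alpha)$, so $W$ is spanned by $(\tau,\alpha)$, $(\alpha\tau,0)$, $(0,\alpha\tau)$, and set $N=R^2/W$, a $5$-dimensional module. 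The map $\beta$ with
$$\beta(a)=\ov{(\alpha,\alpha)},\qquad \beta(b)=0,\qquad \beta(ab)=\ov{(\alpha,\alpha)}$$
is a well-defined cocycle (check $\alpha(\alpha,\alpha)=0$ and $\tau(\alpha,\alpha)=(\alpha\tau,\alpha\tau)\in W$), and it is locally trivial: $\beta(a)=(1+a)\ov{(1,1)}$, $\beta(b)=0$, and $\beta(ab)=(1+ab)\ov{(1,0)}$ since $(\alpha+\tau+\alpha\tau,0)+(\alpha,\alpha)=(\tau,\alpha)+(\alpha\tau,0)\in W$. But $\beta$ is not a coboundary: $\beta(b)=(1+b)\ov{(x,y)}$ forces the constant terms of $x$ and $y$ to vanish (compare coefficients against the spanning set of $W$), which puts $\alpha(x,y)$ in $W$, and then $\beta(a)=(1+a)\ov{(x,y)}$ would force $(\alpha,\alpha)\in W$, impossible because every element of $W$ has first coordinate with zero $\alpha$-coefficient. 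Group-theoretically: the $2$-group $N\rtimes H$ of order $128$ has two complements to $N$ that are elementwise conjugate but not conjugate.

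This shows the strategy cannot be repaired by a cleverer analysis of the base case; the local-to-global principle itself is what fails. The statement you need is true only because of the global hypotheses that your reductions discard: passing to a Sylow $p$-subgroup destroys irreducibility, faithfulness, and the solvability of the acting group, and after that loss the desired vanishing is simply wrong. What is true, and what the primitive case actually requires, is the classical theorem that all core-free maximal subgroups of a primitive solvable group are conjugate, i.e.\ $H^1(H,N)=0$ outright for this particular module, with no local hypothesis at all. That theorem is essentially Ore's theorem \cite{O}, and Ore's theorem is exactly what the paper invokes, in a way that bypasses cohomology entirely: non-conjugate maximal subgroups $L,M$ of a solvable group have distinct cores \cite[(16.1)]{DoHa}, so one takes a chief series through $\Core_G(M)$, finds the first chief factor $N_i/N_{i-1}$ with $N_i\leq M$ but $N_i\not\leq L$, notes that $L\cap N_i=N_{i-1}$ by maximality of $L$ and abelianness of the chief factor, and concludes that $L$ misses every $G$-conjugacy class in $N_i\setminus N_{i-1}$ while $M$ meets all of them. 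So the gap in your proposal is not a missing computation: the deferred base case is false, and any correct argument must use (or reprove) Ore-type information about the full solvable group.
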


  \begin{proof}
    By a Theorem of Ore (see \cite[(16.1)]{DoHa} or \cite{O}), 
    $L$ and $M$ have different cores in $G$. Let $C=\Core_G(M)$. 
    We may assume that $C \not\leq L$. Let 
    \[ 1=N_0 \lhd N_1 \lhd \ldots \lhd N_\ell=G \] be a chief series for 
    $G$ such that $N_j=C$ for some $j \in [\ell]$. Find the smallest 
    $i \in [\ell]$ such that $N_i \not\leq L$. Note $i \leq j$. Then 
    $L/N_{i-1}$ is a complement to $N_i/N_{i-1}$ in $G/N_{i-1}$ and 
    thus $L \cap N_i=N_{i-1}$. It follows that $L$ has empty intersection 
    with every $G$-conjugacy class in $N_i \setminus N_{i-1}$.  
    However, $N_i \leq C \leq M$.
  \end{proof}

  \begin{Lemma} \label{selfnor}
    If $M$ is a maximal element of $\nnm(G)$ with respect to the order 
    inherited from $\srl(G)$, then $N_G(M)=M$.
  \end{Lemma}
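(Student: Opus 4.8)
The plan is to argue by contradiction. I assume $N_G(M)\supsetneq M$ and manufacture a subrack lying strictly between $M$ and its closure $\ov{M}$; this violates defining condition~(B) of $\nnm(G)$ (every subrack properly containing $M$ must contain $\ov{M}$), contradicting $M\in\nnm(G)$. Note first that if $M$ happens to be a maximal subgroup of $G$ there is nothing to do: a non-normal maximal subgroup is always self-normalizing. So the real content lies in the case where $M$ is \emph{not} a maximal subgroup.

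The first step is to pin down the local structure. By \ref{maxcor}, $M$ is a non-normal subgroup all of whose proper overgroups are normal in $G$. If $N:=N_G(M)\supsetneq M$, then $N$ is such an overgroup, so $N\trianglelefteq G$ and $M\trianglelefteq N$. The intersection of all subgroups properly containing $M$ is normal (being an intersection of normal subgroups) and strictly contains $M$ (otherwise $M$ would be normal), so there is a \emph{unique} minimal overgroup $K_0\trianglelefteq G$, and $M$ is a maximal subgroup of $K_0$. Since $K_0\le N=N_G(M)$ we get $M\trianglelefteq K_0$, and hence $K_0/M$ has prime order $p$. The key observation I would use is that $K_0/M$ is abelian: for $k\in K_0$ one has $k(Mx)k^{-1}=M(kxk^{-1})=Mx$, so conjugation by any element of $K_0$ fixes every coset of $M$ in $K_0$ setwise. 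Consequently, for $z\in K_0$ the subrack $S$ generated by $M\cup\{z\}$ has every element outside $M$ obtained from $z$ by iterated conjugation by elements of $S\subseteq K_0$; such conjugations fix the coset $Mz$ and preserve any normal subgroup of $K_0$ through which $z$ ranges. This confinement is exactly what keeps $S$ strictly smaller than $\ov{M}=\bigcup_{g\in G}M^g\subseteq K_0$.

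Now I build the violating subrack. Pick a conjugate $M^g\neq M$ and an element $z\in M^g\setminus M$. Since $M^g\trianglelefteq K_0$, the confinement above gives $S\subseteq M\cup(M^g\cap Mz)$. This omits an element of $\ov{M}\setminus M$ unless every element of every conjugate outside $M$ already lies in the single set $M^g\cap Mz$; a short check shows this forces $p=2$ together with $M$ having exactly two conjugates. In every other situation $S$ is a subrack with $M\subsetneq S$ and $\ov{M}\not\subseteq S$, and condition~(B) fails. In the exceptional case $p=2$ with conjugates $M,M^g$ only, I would instead exploit the Klein four quotient: writing $D=M\cap M^g$ one has $K_0=MM^g$ and $K_0/D\cong \ZZ/2\times\ZZ/2$, whose third involution is the image of a third index-two subgroup $M''\trianglelefteq K_0$ with $M''\cap M^g=D\subseteq M$. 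Thus $M''\setminus M$ is disjoint from $\ov{M}=M\cup M^g$; choosing $z\in M''\setminus M$ and using $M''\trianglelefteq K_0$, confinement yields $S\subseteq M\cup M''$, again disjoint from $\ov{M}\setminus M$. So here too condition~(B) fails, completing the contradiction and forcing $N_G(M)=M$.

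I expect the genuine obstacle to be precisely this last case, $p=2$ with only two conjugates, where the two cosets of $M$ in $K_0$ already exhaust $\ov{M}$ and the naive construction from $M^g$ cannot separate anything; the resolution is the transverse subgroup $M''$ coming from $K_0/D\cong \ZZ/2\times\ZZ/2$. Throughout I would verify the two routine points on which the argument rests: that the chosen $z$ normalizes $M$ (automatic, since $z\in K_0\le N_G(M)$), so that $M$ really is closed under $\triangleright$ inside $S$, and that $M^g$ and $M''$ are normal in $K_0$, so that the relevant conjugation-orbits stay inside them and the sets in question are genuine subracks.
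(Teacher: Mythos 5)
Your proof is correct, and while it opens exactly as the paper's does, it reaches the contradiction by a genuinely different mechanism. Both arguments introduce the intersection $K$ of all subgroups of $G$ properly containing $M$, use \ref{maxcor} to get $K \unlhd G$ with $M$ a maximal subgroup of $K$, and note that $N_G(M) \neq M$ forces $M \lhd K$. The paper then counts $K$-conjugacy classes: if $K \setminus M$ contains more than one $K$-class, then $M$ acquires two incomparable covers among unions of $K$-classes, contradicting (B); if it is a single $K$-class, then $K = \ov{M}$ and \ref{coatom} makes $\il([\emptyset,\ov{M}]_\rk)$ a Boolean algebra, contradicting (D). You avoid (D) altogether: using that $K/M$ has prime order $p$ (so conjugation by elements of $K$ fixes each coset of $M$ setwise) and that every conjugate $M^g$ is normal in $K$, you trap the subrack generated by $M \cup \{z\}$, $z \in M^g \setminus M$, inside $M \cup (M^g \cap Mz)$, which fails to contain $\ov{M} = \bigcup_{h} M^h$ and so contradicts (B) --- except when $p=2$ and $M$ has exactly two conjugates, which your Klein four argument via the third index-two subgroup $M''$ of $K$ over $M \cap M^g$ disposes of. I verified your ``short check'': $M^h \subseteq M \cup M^g$ forces $M^h \in \{M, M^g\}$ because no group is the union of two proper subgroups, and $M^g \setminus M \subseteq Mz$ forces $[M^g : M \cap M^g]=2$, whence $K = MM^g$ and $p=2$; likewise your confinement claim holds because $M \cup (W \cap Mz)$ is itself a subrack for any $W \lhd K$ containing $z$, as one sees by checking the four cases of $a \triangleright b$ with $a,b$ in $M$ or in $W \cap Mz$. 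The trade-off: the paper's proof is shorter but leans on condition (D) through the Boolean-algebra lemma \ref{coatom}, whereas yours is more elementary and self-contained, and shows that, granted \ref{maxcor}, condition (B) alone already forces $N_G(M)=M$.
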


  \begin{proof}
    Let $K$ be the intersection of all subgroups of $G$ properly containing 
    $M$. Then $K \unlhd G$ by \ref{maxcor}. So, $K \neq M$ and $M$ is a 
    maximal subgroup of $K$. Moreover, if $N_G(M) \neq M$, then 
    $K \leq N_G(M)$.
 
    Assume for contradiction that $M \lhd K$. Thus $M$ is a union of 
    $K$-conjugacy classes. If $K \setminus M$ contains more than one 
    $K$-class, then $M$ is covered by more than one element in $[M,K]_\rk$.  
    This is impossible, as $M$ is covered only by $\ov{M}$ in $[M,G]_\rk$.  
    If $K \setminus M$ consists of one $K$-class, then $M$ is a maximal 
    subrack of $K$ and $K=\ov{M}$. This is also impossible, 
    as $\il([\emptyset,K]_\rk)$ is a Boolean algebra by \ref{coatom},         
    while $M$ satisfies (D).
  \end{proof}

  \begin{Corollary} \label{solvmax}
    If $G$ is solvable and $M \in \nnm(G)$, then $M$ is a non-normal maximal 
    subgroup of $G$.
  \end{Corollary}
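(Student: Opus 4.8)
The plan is to deduce the statement from the results just established by first passing to a maximal element of $\nnm(G)$ lying above $M$. Given $M \in \nnm(G)$, I would choose $\tilde M \in \nnm(G)$ maximal with respect to the inherited order and satisfying $M \le \tilde M$. By \ref{pnnm1}(1) and \ref{maxcor}, $\tilde M$ is a non-normal subgroup of $G$ such that every subgroup properly containing $\tilde M$ is normal in $G$, and by \ref{selfnor} we have $N_G(\tilde M) = \tilde M$. If I can show that such a $\tilde M$ is in fact a maximal subgroup of $G$, then I am essentially done: since $M \le \tilde M$ are both subgroups, the inequality $\tilde M \neq M$ would make $\tilde M$ a proper overgroup of $M$, and then conditions (B) and (C) in the definition of $\nnm(G)$ would force $\tilde M$ to be closed, hence a union of $G$-conjugacy classes, hence normal in $G$ -- contradicting that $\tilde M$ is non-normal. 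Therefore $M = \tilde M$, and $M$ is a non-normal maximal subgroup.

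The hard part is showing that $\tilde M$ is a maximal subgroup, and this is where solvability enters, through \ref{diffclo}. Assume for contradiction that $\tilde M$ is not maximal. Let $K$ be the intersection of all subgroups of $G$ properly containing $\tilde M$. As all of these are normal in $G$, so is $K$; moreover $K \neq \tilde M$ (otherwise $\tilde M$ would be an intersection of normal subgroups and hence normal), and a short argument shows that $K$ is the unique minimal overgroup of $\tilde M$, so that $\tilde M$ is a maximal subgroup of $K$ with $K < G$. Since $N_G(\tilde M) = \tilde M \le K$ we get $N_K(\tilde M) = \tilde M$, and thus $\tilde M$ is a non-normal maximal subgroup of the solvable group $K$.

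The key observation I would establish is that the $K$-closure of $\tilde M$, meaning the union of $K$-conjugacy classes meeting $\tilde M$, coincides with its $G$-closure $\ov{\tilde M}$. Indeed, the $K$-closure properly contains $\tilde M$ because $\tilde M$ is non-normal in $K$; it is contained in $\ov{\tilde M}$ because each $K$-class meeting $\tilde M$ lies inside a $G$-class meeting $\tilde M$; and condition (B) guarantees that nothing lies strictly between $\tilde M$ and $\ov{\tilde M}$ in $\srl(G)$. Hence the two closures coincide. Now pick $g \in G \setminus K$. Because $K \lhd G$, the subgroup $\tilde M^g$ is again maximal in $K$, and it is not $K$-conjugate to $\tilde M$ (otherwise $g$ would lie in $N_G(\tilde M)\cdot K = K$). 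Conjugation by $g$ carries the $K$-closure of $\tilde M$ to the $K$-closure of $\tilde M^g$; but the $K$-closure of $\tilde M$ equals $\ov{\tilde M}$, which is a union of entire $G$-conjugacy classes and is therefore invariant under conjugation by $g$. Thus $\tilde M$ and $\tilde M^g$ are non-conjugate maximal subgroups of the solvable group $K$ having the same $K$-closure, contradicting \ref{diffclo}. This contradiction shows that $\tilde M$ is maximal.

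I expect the main obstacle to be pinning down the equality of the $K$-closure and the $G$-closure together with the invariance argument; everything else amounts to bookkeeping with the conditions defining $\nnm(G)$ and with the unique-minimal-overgroup structure coming from \ref{maxcor} and \ref{selfnor}. It is worth noting that solvability is used only through \ref{diffclo} (ultimately Ore's theorem on the cores of maximal subgroups), so the combinatorial part of the argument is insensitive to the precise group-theoretic input.
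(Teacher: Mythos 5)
Your proposal is correct and is essentially the paper's own argument: both form the normal subgroup $K$ as the intersection of all subgroups properly containing the given element, produce a $G$-conjugate inside $K$ that is not a $K$-conjugate, use condition (B) to identify the $K$-closures with the common $G$-closure, and derive a contradiction with \ref{diffclo}. Your extra reduction step --- passing to a maximal element $\tilde M$ of $\nnm(G)$ and using (B), (C) to force $M=\tilde M$ --- is in fact slightly more careful than the paper, whose proof applies \ref{selfnor} and \ref{maxcor} (stated only for maximal elements of $\nnm(G)$) directly to an arbitrary $M\in\nnm(G)$.
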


  \begin{proof}
    Again, let $K$ be the intersection of all subgroups of $G$ properly 
    containing $M$. Assume for contradiction that $K \neq G$. By 
    \ref{selfnor}, $M$ has $[K:M]$ $K$-conjugates and $[G:M]$ $G$-conjugates.  
    Thus there exists a $G$-conjugate $L$ of $M$ that is not a $K$-conjugate 
    of $M$. As $K \lhd G$, we know that $L \leq K$. Moreover, 
    $\ov{L_G}=\ov{M_G}$ is the unique element covering $L$ in $\srl(G)$. 
    It follows that
    \[
      \ov{L_K}=\ov{L_G}=\ov{M_G}=\ov{M_K}.
    \]
    This is impossible by \ref{diffclo}, as $K \leq G$ is solvable.
 \end{proof}

\subsection{Proof of \ref{main}}

  \begin{proof}[Proof of \ref{main}(1)] 
    If $G$ is abelian then every subset of $G$ is a subrack of $G$ and thus 
    $\srl(G)$ is a Boolean algebra. On the other hand, say the elements 
    $x,y \in G$ do not commute. Then $\{x,y\}$ is not a subrack of $G$.  
    As the atoms in $\srl(G)$ are exactly the one-element subsets of $G$, 
    it follows that $\srl(G)$ is not a Boolean algebra.  
    \ref{main}(1) follows.
  \end{proof}

  To prove that \ref{main}(2) holds, we use the following result 
  (see \cite[Exercise 4, p. 107]{KuSt}).

  \begin{Theorem} \label{hup}
    A finite group $G$ is nilpotent if and only if every maximal subgroup 
    of $G$ is normal in $G$.
  \end{Theorem}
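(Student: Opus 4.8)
The plan is to establish the two implications separately, relying on standard subgroup-theoretic machinery.

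For the forward direction, assume $G$ is nilpotent. The main tool is the \emph{normalizer condition}: every proper subgroup $H < G$ is properly contained in its normalizer. I would prove this by climbing the upper central series $1 = Z_0 \lhd Z_1 \lhd \cdots \lhd Z_n = G$, defined by $Z_{i+1}/Z_i = Z(G/Z_i)$. Taking the least index $i$ with $Z_i \not\leq H$ (which exists since $Z_0 \leq H$ but $Z_n = G \not\leq H$), centrality of $Z_i/Z_{i-1}$ in $G/Z_{i-1}$ gives $[z,h] \in Z_{i-1} \leq H$ for every $z \in Z_i$ and $h \in H$, so $z h z^{-1} = [z,h]\,h \in H$ and thus $Z_i \leq N_G(H)$. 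Picking $z \in Z_i \setminus H$ yields $H < N_G(H)$. Applying this to a maximal subgroup $M$ forces $M < N_G(M) \leq G$, so by maximality $N_G(M) = G$; that is, $M \lhd G$.

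For the reverse direction, assume every maximal subgroup of $G$ is normal, and let $P$ be a Sylow $p$-subgroup. I would argue that $P \lhd G$. If not, then $N_G(P) < G$, so there is a maximal subgroup $M \geq N_G(P)$, which is normal by hypothesis. Since $P \leq N_G(P) \leq M$ and $P$ is Sylow in $G$, it is also Sylow in $M$, so the Frattini argument applied to $M \lhd G$ gives $G = M\,N_G(P)$. But $N_G(P) \leq M$ forces $G = M$, a contradiction. Hence every Sylow subgroup of $G$ is normal.

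To conclude, normal Sylow subgroups belonging to distinct primes intersect trivially and hence commute elementwise, so $G$ is the internal direct product of its Sylow subgroups; each factor is a finite $p$-group and therefore nilpotent, and a finite direct product of nilpotent groups is nilpotent, so $G$ is nilpotent. I expect the forward implication to be the main obstacle, since the normalizer condition really does require the explicit structure of the upper central series rather than the mere existence of some central series; the reverse implication is routine once the Frattini argument is available, the only point needing care being that $P$ remains Sylow inside the normal maximal subgroup $M$.
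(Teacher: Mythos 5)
Your proof is correct and complete. Note, however, that the paper does not prove this statement at all: it is quoted as a known result with a pointer to \cite[Exercise 4, p.~107]{KuSt}, and is then used as a black box in the proof of \ref{main}(2). Your argument is the standard textbook solution of that exercise, and both halves are sound: in the forward direction the normalizer condition is derived correctly from the upper central series (the key computation $zhz^{-1}=[z,h]h\in H$ for $z\in Z_i$, $h\in H$, $[z,h]\in Z_{i-1}\leq H$ is exactly right), and in the converse the Frattini argument applied to the normal maximal subgroup $M\geq N_G(P)$ gives $G=MN_G(P)=M$, a contradiction, after which the decomposition of $G$ as an internal direct product of its normal Sylow subgroups finishes the job. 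The background facts you invoke without proof (finite $p$-groups are nilpotent; finite direct products of nilpotent groups are nilpotent) are entirely standard and appropriate to assume. One minor alternative for the converse: instead of Frattini one can use the fact that any subgroup containing a Sylow normalizer is self-normalizing, so $N_G(P)\leq M$ forces $N_G(M)=M$, directly contradicting the hypothesis that the proper maximal subgroup $M$ is normal; this is essentially the same computation packaged differently, so nothing is gained or lost either way.
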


  \begin{proof}[Proof of \ref{main}(2)]
    As a nilpotent group is solvable, it follows from \ref{hup}, 
    \ref{pnnm1}(2) and \ref{solvmax} that $G$ is nilpotent if and only 
    if $\nnm(G)=\emptyset$.
  \end{proof}

  For the proof of the solvable case \ref{main}(4) we will need the following 
  result of Kano (see \cite[Theorem 1]{Ka}).

  \begin{Theorem} \label{kano}
    If every non-normal maximal subgroup of $G$ has the same order, 
    then $G$ is solvable.
  \end{Theorem}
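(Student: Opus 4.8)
The plan is to argue by contradiction through a minimal counterexample. Suppose the theorem fails and let $G$ be a non-solvable group of least order in which all non-normal maximal subgroups share a common order $m$; write $n=|G|/m$ for their common index. Since a non-solvable group is not nilpotent, \ref{hup} guarantees that $G$ really does have a non-normal maximal subgroup, so $n>1$ and the hypothesis has content.

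First I would push the hypothesis through quotients. If $1\neq N\lhd G$, then the non-normal maximal subgroups of $G/N$ are exactly the images $M/N$ of the non-normal maximal subgroups $M$ of $G$ containing $N$, and these all have order $m/|N|$; hence $G/N$ again satisfies the hypothesis and, by minimality, is solvable. Thus every proper quotient of $G$ is solvable. If $G$ had two distinct minimal normal subgroups $N_1,N_2$, then $N_1\cap N_2=1$ would embed $G$ into the solvable group $(G/N_1)\times(G/N_2)$, forcing $G$ solvable; so $G$ has a unique minimal normal subgroup $N$. As $G$ is non-solvable, $N$ cannot be solvable, so $N=T_1\times\cdots\times T_k$ is a direct product of copies of a non-abelian simple group $T$. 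Finally $C_G(N)\lhd G$ meets $N$ trivially, so by uniqueness of $N$ it is trivial; hence $G$ embeds in $\mathrm{Aut}(N)$ and $N$ is the socle of $G$.

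The next step is to eliminate the case $k\geq 2$. Here one compares two kinds of maximal subgroups of $N=T^k$: a ``product type'' subgroup built from a maximal subgroup $M<T$ in one coordinate, of index $[T:M]<|T|$, and a ``diagonal type'' subgroup obtained by identifying two coordinates, of index $|T|$. Since $T$ is non-abelian simple both are non-normal, and a short check---using that the diagonal is maximal in $T\times T$---shows that appropriate $G$-invariant versions remain maximal in $G$; as $[T:M]\neq|T|$ they have different orders, contradicting the hypothesis. When $N=G=T^k$ this is immediate, and in general it is exactly the content of the relevant cases of the O'Nan--Scott theorem. Hence $k=1$ and $G$ is almost simple, with socle $T$ and solvable outer part $G/T$.

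It remains to derive a contradiction in the almost simple case, and this is the main obstacle. What one needs is that every non-abelian almost simple group possesses two non-normal maximal subgroups of different orders; for $G=T$ simple this reduces to the assertion that a non-abelian simple group cannot have all its maximal subgroups of equal order, noting that every maximal subgroup of such a $T$ is automatically non-normal. I do not expect an elementary proof of this: I would establish it by inspection of the classification of finite simple groups, exhibiting in each family two maximal subgroups of distinct orders (for instance $A_4$ and $D_{10}$ inside $A_5$, or parabolic versus non-parabolic maximal subgroups in the groups of Lie type) and then transferring the comparison up to the almost simple overgroup. In the pre-classification setting one would instead attempt to reduce $G$ to a minimal simple group, via the Feit--Thompson odd order theorem together with Thompson's $N$-group classification, and check the resulting short list directly; carrying out that reduction cleanly, rather than the final family-by-family verification, is the step I expect to be most delicate.
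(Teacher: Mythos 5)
The first thing to note is that the paper offers no proof of this statement to compare against: Theorem \ref{kano} is quoted from Kano's two-page 1979 paper \cite{Ka} and used as a black box in the proof of \ref{main}(4). So your sketch must be judged as a self-contained proof attempt, and as such it has genuine gaps. Your opening reduction is correct: the hypothesis passes to quotients (non-normal maximal subgroups of $G/K$ correspond exactly to non-normal maximal subgroups of $G$ containing $K$), so a minimal counterexample has all proper quotients solvable, and hence every minimal normal subgroup is a product $T_1\times\cdots\times T_k$ of isomorphic non-abelian simple groups. But both remaining halves are gapped. For $k\ge 2$, the subgroups you exhibit are maximal in $N$ (or in $T\times T$), not in $G$: to promote the product type you need a maximal subgroup of $T$ invariant under the relevant stabilizer of the factor-permutation action of $G$, which need not exist, and for the diagonal type you need maximality of $N_G(D)$ in $G$; neither is a ``short check'' --- this is exactly the nontrivial content of the Aschbacher--Scott/O'Nan--Scott analysis, and existence of \emph{both} types in a given $G$ is not guaranteed. (Also, the case $G=N=T^k$ that you call immediate never occurs, since such a group has $k>1$ minimal normal subgroups.) For $k=1$ you openly defer to CFSG, and the suggested pre-classification alternative cannot work as stated: the hypothesis (equal orders of non-normal maximal subgroups) passes to quotients but \emph{not} to subgroups, so it imposes no condition on the maximal subgroups of the socle $T$ or of any minimal simple section; there is no reduction to minimal simple groups available. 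Invoking CFSG for a two-page 1979 result should itself have been a warning sign.

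The missing idea is that no structure theory of maximal subgroups is needed at all: a Frattini--Sylow counting argument finishes from exactly the point your reduction reaches. Keep the minimal counterexample $G$ and any minimal normal subgroup $N$; as you showed, $N$ is non-solvable, hence neither nilpotent nor a $p$-group. Since $\Phi(G)$ is nilpotent, some maximal subgroup $M$ fails to contain $N$. Any maximal $M$ with $N\not\leq M$ is automatically non-normal: otherwise $M\cap N=1$ by minimality of $N$, whence $|N|=[N:N\cap M]=[G:M]$ is prime (as $G/M$ has no proper nontrivial subgroups), contradicting non-solvability of $N$. So by hypothesis all such $M$ have the common index $n=|G|/m$, and $n=[N:N\cap M]$ divides $|N|$, with $n>1$. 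Now fix any prime $p$ dividing $|N|$ and $P\in \mathrm{Syl}_p(N)$. The Frattini argument gives $G=NN_G(P)$, and $N_G(P)<G$ since otherwise $1\neq P\lneq N$ would be normal in $G$, contradicting minimality of $N$. Any maximal $M_p\supseteq N_G(P)$ satisfies $NM_p\supseteq NN_G(P)=G$, hence $N\not\leq M_p$, hence $M_p$ is non-normal of index $n$; and $n$ divides $[G:N_G(P)]=[N:N_N(P)]\equiv 1 \pmod{p}$ by Sylow's theorem, so $p\nmid n$. Thus $n>1$ divides $|N|$ yet is coprime to every prime divisor of $|N|$ --- a contradiction. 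This elementary argument replaces your O'Nan--Scott and CFSG steps entirely, and is the kind of proof the brevity of Kano's paper suggests.
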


  \begin{proof}[Proof of \ref{main}(4)]
    We prove \ref{main}(4) by induction on $|G|$, the base case $G=1$ being 
    trivial. 

    Assume that $G$ is solvable and let $\psi:\srl(G) \rightarrow \srl(H)$ 
    be an isomorphism. By \ref{main}(2), we may assume that $G$ is 
    not nilpotent. Now by \ref{hup}, $G$ has at least one conjugacy 
    class of non-normal maximal subgroups.

    Assume first that $G$ has two non-conjugate, non-normal maximal subgroups, 
    $M_1$ and $M_2$. For $i=1,2$, set $N_i=\Core_G(M_i)$, $L_i=\psi(M_i)$ and 
    $D_i=\Core_H(L_i)$. 
   
%

    We have the following facts. 
    \begin{itemize}
      \item Each $L_i$ is a subgroup of $H$ by \ref{maxcor}.
      \item For $i=1,2$, $\psi(\overline{M_i})=\overline{L_i}$.  Indeed,
      $[L_i,H]_\rk \cong [M_i,G]_\rk$.  As $\overline{M_i} \in \il(\srl(G))$,
      it follows that $\psi(\overline{M_i}) \in \il(\srl(H))$ is closed.  Moreover,
      $\psi(\overline{M_i})$ covers $L_i$.
      \item For $i=1,2$, $\psi(N_i)=D_i$.  To see this, note that by \ref{diffclo},
      the intersection of the set of coatoms in $[\emptyset,\overline{M_i}]$ with $\nnm(G)$
      is the set of $G$-conjugates of $M_i$.  The map $\psi$ sends this intersection to the
      intersection of the set of coatoms in $[\emptyset,\overline{L_i}]$ with $\nnm(H)$.
      As $|G|=|H|$ and $|M_i|=|L_i|$, it follows from \ref{selfnor}, that this second intersection
      consists exactly of the $H$-conjugates of $L_i$.  Now taking the meets in the respective
      lattices of the two intersections in question yields the claim.
    \end{itemize}

    Now $N_1 \neq N_2$, as $M_1,M_2$ are not conjugate.  
    We may assume that $N_1 \not\leq M_2$.  Therefore, $D_1 \not\leq L_2$.  

    By \ref{diffclo}, the only elements of $\nnm(H)$ with the same closure as 
    $L_1$ are those of the form $\psi(X)$ with $X$ conjugate to $M_1$ in $G$.  
    By \ref{selfnor}, we get $N_H(L_1)=L_1$. As $|G|=|H|$ and 
    $|L_1|= |M_1|$ by \ref{ord}, we see that $D_1$ is the intersection of the 
    $H$-conjugates of $L_1$.  Thus $D_1 \lhd H$. Now $L_2<D_1L_2 \leq H$. 
    As $L_2$ is maximal in $\nnm(H)$, we see that $D_1L_2 \unlhd H$.  
    Moreover, $H/D_1L_2$ is nilpotent by \ref{hup}.  
    By inductive hypothesis, $D_1$ is solvable, as is 
    $D_1L_2 /D_1\cong L_2/(D_1 \cap L_2)$. Thus $H$ is solvable.

    Now assume that $G$ has exactly one conjugacy class of non-normal maximal 
    subgroups. Then all maximal elements of $\nnm(H)$ (with respect to the 
    order inherited from $\srl(H)$) have the same order, by \ref{ord} and 
    \ref{maxcor}. Therefore, $H$ is solvable by \ref{kano}. 
  \end{proof}

  \begin{proof}[Proof of \ref{main}(3)]
    Say $G$ is supersolvable and $\psi:\srl(H) \rightarrow \srl(G)$ is an isomorphism. 
    Then $H$ is solvable 
    by \ref{main}(4). Thus every $X \in \nnm(H)$ is maximal and non-normal in 
    $H$. It follows that $\psi^{-1}(X)$ is maximal and non-normal in $G$.  
    Thus $[G:\psi^{-1}(X)]$ is prime, as $G$ is supersolvable. Now $[H:X]$ is 
    also prime, as $|H|=|G|$ and $|X|=|\psi^{-1}(X)|$. We see now that every 
    maximal subgroup of $H$ has prime index. It follows from a result of 
    Huppert (see \cite[Satz 9]{Hu}) that $H$ is supersolvable.  
  \end{proof}

  It remains to prove \ref{main}(5). 

  \begin{proof}[Proof of \ref{main}(5)]
    Assume that $G$ is simple and $\srl(H) \cong \srl(G)$. If $G$ is abelian 
    then $G$ has prime order and so does $H$. Assume from now on that $G$ is 
    non-abelian simple. For any group $X$ and any $x \in X$, we will write 
    $\cl_X(x)$ for the conjugacy class of $x$ in $X$. We observe that every 
    non-trivial conjugacy class of $G$ generates $G$ as a group. It follows 
    that if $C \subseteq G$ is a non-trivial conjugacy class and $g \in G$, 
    then the join of $C$ and $\{g\}$ in $\srl(G)$ is $C \bigcup \cl_G(g)$.  
    It follows from \ref{coatom} that if $C \in \srl(H)$ is an atom in 
    $\il(\srl(H))$ and $h \in H$, then the join of $C$ and $\{h\}$ in 
    $\srl(H)$ is the union of $C$ and the unique atom of $\il(\srl(H))$ 
    lying above $\{h\}$ in $\srl(H)$.

    Let $N$ be a minimal normal subgroup of $H$. Thus 
    $$N=N_1 \times \ldots \times N_t,$$ with the $N_i$ being pairwise 
    isomorphic simple groups.

    We claim first that $N$ is not central in $H$. In fact, $Z(H)=1$.  
    Indeed, $Z(H)$ consists of those $h \in H$ such that 
    $\{h,x\} \in \srl(H)$ for all $x \in H$. As $Z(G)=1$, $\srl(G)$ has one 
    such element, and the claim follows.

    Next, assume for contradiction that $N$ is abelian, that is, each $N_i$ 
    has prime order $p$. By \ref{main}(2), we know that $H$ is not a 
    $p$-group. There is some $h \in H$ of prime order $r \neq p$. Pick some 
    $x \in N \setminus 1$. Note that $\cl_H(x) \subseteq N$. Let 
    $K=\langle h,\cl_H(x) \rangle$. The join of $\{h\}$ and $\cl_H(x)$ in 
    $\srl(H)$ is the union of $\cl_H(x)$ and $\cl_K(h)$. Therefore, 
    $\cl_K(h)=\cl_H(h)$.  Now, as $K \leq \langle h \rangle N$, 
    $[K:C_K(h)]$ is a $p$-group.  Therefore, $|\cl_H(h)|$ is a power of $p$.  
    However, the multiset of conjugacy class sizes in $H$ is the same as 
    that of $G$. As $G$ is simple, no non-trivial conjugacy class in $G$ has 
    prime power order, as shown by Burnside in \cite[p. 392]{Bu}.

    We assume now that each $N_i$ is non-abelian simple and assume further 
    for contradiction that $t>1$. Pick $x \in N_1$ and some non-trivial 
    conjugacy class $C$ of $H$ that is contained in $N$ but does not contain 
    $x$. (Such $C,x$ exist, since $N$ is not elementary abelian.)  
    Let $L=\langle C,x \rangle$. Then the join of $C$ and $x$ in 
    $\srl(G)$ is $C \cup \cl_L(x)$. This forces $\cl_L(x)=\cl_H(x)$.  
    However, $\cl_L(x) \subseteq N_1$, while $\cl_H(x) \not\subseteq N_1$, 
    as $H$ acts transitively on $\{N_1,\ldots,N_t\}$ by the minimality of $N$.

    We conclude that every minimal normal subgroup of $H$ is non-abelian 
    simple. Assume for contradiction that $M,N$ are distinct minimal normal 
    subgroups of $H$. Then $M$ and $N$ commute elementwise. Pick a non-trivial 
    conjugacy class $C$ of $G$ contained in $M$ and a non-trivial element 
    $x \in N$. Let $J=\langle C,x \rangle$. The join of $C$ and $\{x\}$ 
    in $\srl(H)$ is $C \bigcup \cl_J(x)$.  It follows that 
    $\cl_H(x)=\cl_J(x)$.  However, $x \in Z(J)$ while $x \not\in Z(H)$.

    We see now that $H$ has a unique minimal normal subgroup $N$ and $N$ is 
    non-abelian simple, that is, $H$ is almost simple.  Assume for 
    contradiction that $H \neq N$. A result of Feit and Seitz  
    (\cite[Theorem C]{FeSe}) says that there is some conjugacy class $C$ of 
    $N$ such that $C$ is not a conjugacy class in $H$. Pick some $x \in C$ 
    and some non-trivial conjugacy class $D \neq C$ of $N$. As above, the 
    join of $D$ and $x$ in $\srl(H)$ is contained in $D \bigcup C$.  
    However, we know already that this join is $D \bigcup \cl_H(x)$.  
    This completes the proof of \ref{main}(5).
  \end{proof}


\subsection{Final comments on the proof or \ref{main}}

  Let us present a second proof of \ref{main}(2). 
  The conjugacy classes in $G$ are exactly the atoms in $\il(\srl(G))$.  
  Thus we can determine the multiset of conjugacy class sizes in $G$ from 
  $\srl(G)$. According to a result of Cossey, Hawkes and Mann in \cite{CHM}, 
  this multiset tells us whether or not $G$ is nilpotent.  As far as we know, 
  it is not known if the multiset of class sizes tells us whether or not $G$ 
  is supersolvable, or if it tells us whether or not $G$ is solvable.

  For ease of exposition, we invoke the Feit-Thompson Odd Order 
  Theorem at one point. As we indicate at that point, 
  we can obtain the same consequence by more elementary means.

  To finish our proof of \ref{main}(5), we invoked \cite[Theorem C]{FeSe}, 
  the proof of which uses the Classification. Without the Classification, we 
  have shown that if $G$ is simple and $\srl(H) \cong \srl(G)$, then $H$ is 
  almost simple. We can conclude in addition that $H/F^\ast(H)$ is nilpotent.  
  Indeed, otherwise some non-normal maximal subgroup $M<H$ contains 
  $F^\ast(H)$. It follows that there exists some 
  ${\mathcal S} \subseteq \nnm(H)$  such that $\bigcap_{S \in {\mathcal S}}S$ 
  is closed in $\srl(H)$ and has more than one element. Indeed, the 
  intersection of all $H$-conjugates of $M$ is $F^\ast(H)$. As 
  $\srl(G) \cong \srl(H)$, the intersection of some elements of $\nnm(G)$ is 
  a closed subrack of $G$ containing more than one element. As $G$ is simple, 
  this is impossible by \ref{pnnm1}(1).

\section{Gradedness of subrack lattice}
  \label{sec4}

  In this section we will prove \ref{graded} after stating and proving a 
  series of preliminary results.  

  If $G$ is abelian, then $\srl(G)=2^G$ is graded.  It is reasonable to hope 
  that all non-abelian groups have non-graded subrack lattices, and one might 
  try  to prove that this is the case by 
  showing that minimal non-abelian groups have non-graded subrack lattices.  
  An examination of this class finds three exceptions, namely, the three 
  smallest non-abelian groups, $S_3$, $D_8$ and $Q_8$.  
  Indeed, the next result can easily be proved by direct inspection.  

  \begin{Proposition} \label{s3d8q8}
    If $G \cong S_3$, then $\srl(G)$ is graded and each maximal 
    chain has length $3$. 
    If $G$ is isomorphic to one of 
    $D_8$ or $Q_8$, then $\srl(G)$ is graded and each maximal chain has
    length $5$. 
  \end{Proposition}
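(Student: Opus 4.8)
The plan is to remove the center and reduce each case to a single small poset. Write $Z=Z(G)$ for the center and let $R$ be the union of the non-central conjugacy classes of $G$, so that $G=R\cup Z$ is a disjoint union with $Z\cap R=\emptyset$. By \ref{le:product}(1) we have $\srl(G)\cong\srl(R)\times\srl(Z)$, and since the elements of $Z$ commute pairwise, $\srl(Z)=2^Z$ is a Boolean algebra, which is graded with $|Z|$ covering steps on every maximal chain. A finite bounded poset that is a direct product of two graded posets is itself graded (its covering relations being exactly those that cover in one factor and are equal in the other), and the number of covering steps on a maximal chain is additive across the product. Hence it suffices to show that $\srl(R)$ is graded and to count the covering steps on one of its maximal chains; then $\srl(G)$ is graded, and each maximal chain of $\srl(G)$ has (covering steps of $\srl(R)$)$\,+\,|Z|$ steps. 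Here $|Z|=1$ for $S_3$ and $|Z|=2$ for both $D_8$ and $Q_8$.

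Next I would enumerate $\srl(R)$ by brute force, using \ref{le:groupcase} to keep the list short: every subrack $Q\subseteq R$ is a union of conjugacy classes of $\langle Q\rangle$, so one need only run over the subgroups $H\le G$ and, for each, record the unions of $H$-classes contained in $R$ that generate $H$. For $S_3$ we have $R=T\cup C$, where $T$ is the set of three transpositions and $C$ the pair of $3$-cycles; the subgroups meeting $R$ are the three order-two subgroups, $A_3$ and $S_3$, and one finds that the subracks of $R$ are exactly $\emptyset$, the five singletons, $C$, $T$, and $R$. For $D_8$ and $Q_8$ the set $R$ consists of three size-two conjugacy classes, each of which is a commuting pair and any two of which generate $G$; a short check then shows the subracks of $R$ to be $\emptyset$, the six singletons, the three classes, the three unions of two classes, and $R$ itself. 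In particular the cardinalities occurring among nonempty subracks of $R$ are $1,2,3,5$ in the $S_3$ case and $1,2,4,6$ in the $D_8$ and $Q_8$ cases.

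Gradedness of $\srl(R)$ is then immediate from the Hasse diagram. The atoms are the singletons, and by \ref{le:product}(2) the coatoms are the unions of all but one class of $R$. For $S_3$ the classes $C$ (size two) and $T$ (size three) are incomparable, so every maximal chain is either $\emptyset\subset\{\sigma\}\subset C\subset R$ with $\sigma$ a $3$-cycle, or $\emptyset\subset\{\tau\}\subset T\subset R$ with $\tau$ a transposition, each having three covering steps. For $D_8$ and $Q_8$ every maximal chain has the shape $\emptyset\subset\{x\}\subset C_x\subset (C_x\cup C_y)\subset R$, where $C_x$ is the class of $x$ and $C_x\cup C_y$ is a union of two classes, giving four covering steps. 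Transporting these counts through the product decomposition, each maximal chain of $\srl(S_3)$ has $3+1=4$ covering steps and hence three subracks strictly between $\emptyset$ and $S_3$, while each maximal chain of $\srl(D_8)$ and of $\srl(Q_8)$ has $4+2=6$ covering steps and hence five subracks strictly between $\emptyset$ and $G$; these are the asserted lengths $3$ and $5$.

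The one point that needs care, and the reason the result is not entirely formal, is that covers in these lattices can raise cardinality by more than one: in $S_3$ a single transposition $\{\tau\}$ is covered directly by the whole class $T$, and in $D_8$ and $Q_8$ a union of two size-two classes is covered directly by $R$. Thus gradedness cannot be read off cardinality, and the real content of the ``direct inspection'' is to verify that no maximal chain of $\srl(R)$ skips or doubles a level, which is exactly what the short enumeration of subracks above pins down. The product reduction is what makes this manageable, replacing the $56$-element lattice $\srl(Q_8)$ and its $D_8$ analogue by the $14$-element lattice $\srl(R)$, and the $18$-element lattice $\srl(S_3)$ by a $9$-element one.
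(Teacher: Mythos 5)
Your proposal is correct, and since the paper proves this proposition only by asserting that it ``can easily be proved by direct inspection,'' your argument is a legitimate and rather better-organized realization of that inspection. The one structural idea you add --- factoring out the center via \ref{le:product}(1), so that $\srl(G)\cong\srl(R)\times 2^Z$ and gradedness plus chain length need only be checked on the small lattice $\srl(R)$ of subracks of the non-central part --- is sound: in a finite bounded poset, covers in a product are exactly covers in one factor with equality in the other, so gradedness is inherited and lengths add. Your enumerations check out: for $S_3$ the subracks of $R$ are $\emptyset$, the five singletons, $C$, $T$, $R$ (every subrack meeting both classes generates $S_3$ and hence is closed, by \ref{le:groupcase}), and for $D_8$, $Q_8$ any two elements from distinct non-central classes generate $G$, so the subracks of $R$ are the subsets of single classes together with unions of full classes, giving the $14$-element lattice; the resulting cover analysis yields all maximal chains of equal length in each case. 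One bookkeeping point you handled correctly but that deserves flagging: your counts give $4$ (resp.\ $6$) covering steps for the full chains from $\emptyset$ to $G$, and you reconcile this with the proposition's ``length $3$'' (resp.\ ``$5$'') by counting elements strictly between $\emptyset$ and $G$; this reading is forced by the proposition's numbers, even though elsewhere (e.g.\ \ref{maxsg} and the first chain in \ref{ex:chains}) the paper counts covering steps --- an inconsistency in the paper's conventions, not in your proof, and one that is harmless since gradedness itself does not depend on the convention.
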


  The main content of \ref{graded} is that these three are the only 
  non-abelian groups with graded subrack lattices.  
  For its proof we will use the following lemma repeatedly.

  \begin{Lemma} \label{maxsg}
    Let $M$ be a maximal subgroup of the finite group $G$.  Let $c$ be the number of conjugacy classes $C$ 
    of $G$ such that $M \cap C=\emptyset$.  
    \begin{enumerate} 
      \item If $M \lhd G$, then every maximal chain in $[M,G]_\rk$ has length $c$. 
      \item If $M$ is not normal in $G$, then every maximal chain in $[M,G]_\rk$ has length $c+1$. 
    \end{enumerate}
  \end{Lemma}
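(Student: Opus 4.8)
The plan is to analyze the interval $[M,G]_\rk$ directly, using the fact that $M$ is a maximal subgroup and that any subrack strictly containing $M$ must generate a subgroup strictly containing $M$, hence all of $G$. First I would observe that if $Q \in [M,G]_\rk$ with $Q \neq M$, then $\langle Q \rangle = G$ by maximality of $M$, so by \ref{le:groupcase} such a $Q$ is a union of $G$-conjugacy classes, i.e. $Q$ is closed. Let $\mathcal{C} = \{C_1,\ldots,C_c\}$ be the conjugacy classes of $G$ with $M \cap C_i = \emptyset$; these are precisely the classes not already contained in $M$ once we pass to $\ov{M}$. The closed subracks lying above $M$ are then exactly the sets $M \cup \bigcup_{i \in S} C_i$ for $S \subseteq [c]$ (here I must use that $\ov{M}$ is the union of $M$ with the classes meeting $M$, so adding any subset of the remaining $c$ classes to $M$ yields a union of classes, and conversely every closed subrack containing $M$ arises this way). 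Hence the closed elements of $[M,G]_\rk$ form a sublattice isomorphic to the Boolean algebra $B_c = 2^{[c]}$, whose every maximal chain has length $c$.

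Next I would handle the two cases according to whether $M$ is normal. In the normal case (1), $M$ is itself a union of $G$-conjugacy classes, so $M$ is closed; thus $M$ is the bottom element of the Boolean algebra $B_c$ of closed subracks identified above, and \emph{every} element of $[M,G]_\rk$ is closed. Therefore $[M,G]_\rk \cong B_c$ and every maximal chain has length $c$, giving (1). In the non-normal case (2), $M$ is not closed, so $M$ lies strictly below $\ov{M}$, and $\ov{M}$ is the unique atom of $[M,G]_\rk$: indeed any $Q$ covering $M$ is closed and contains $M$, hence contains $\ov{M}$, so the only element covering $M$ is $\ov{M}$ itself. Thus every maximal chain in $[M,G]_\rk$ passes through $\ov{M}$ as its first step above $M$, and $[\ov{M},G]_\rk$ is the Boolean algebra $B_c$ (the closed subracks above $M$, with $\ov{M}$ at the bottom), whose maximal chains have length $c$. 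Prepending the single covering relation $M < \ov{M}$ shows every maximal chain in $[M,G]_\rk$ has length $c+1$, giving (2).

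The main obstacle, and the step requiring the most care, is verifying that the closed subracks strictly containing $M$ are in bijection with nonempty-or-otherwise subsets of the $c$ classes disjoint from $M$, and in particular that $\ov{M}$ is the unique atom in the non-normal case. The subtlety is that I must rule out any non-closed subrack $Q$ with $M < Q < G$ and $Q$ not covering $M$ only through $\ov{M}$; this is exactly where maximality of $M$ enters, forcing $\langle Q \rangle = G$ and hence (via \ref{le:groupcase}) closedness of every proper overrack of $M$. Once this is established, the identification of the relevant interval (all of $[M,G]_\rk$ when $M$ is normal, and $[\ov{M},G]_\rk$ when it is not) with a Boolean algebra $B_c$ is immediate from \ref{coatom}, and the length count follows since every maximal chain in $B_c$ has length $c$.
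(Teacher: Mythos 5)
Your proof is correct and takes essentially the same approach as the paper: maximality of $M$ forces every element of $(M,G]_\rk$ to generate $G$ and hence, by \ref{le:groupcase}, to be a union of $G$-conjugacy classes, after which the interval is the Boolean algebra $B_c$ when $M$ is normal, and a single cover $M < \ov{M}$ below the Boolean algebra $[\ov{M},G]_\rk \cong B_c$ when it is not (the paper obtains this last step by citing \ref{pnnm1}(2), whose proof is exactly the argument you wrote out). One minor slip: the parenthetical in your first paragraph describing the closed subracks above $M$ as the sets $M \cup \bigcup_{i \in S} C_i$ is only right when $M$ is normal --- in general they are the sets $\ov{M} \cup \bigcup_{i \in S} C_i$ --- but your second paragraph uses the correct description, so the argument stands.
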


  \begin{proof}
    As $M$ is maximal in $G$, $\langle g,M \rangle=G$ for all $g \in G \setminus M$.  
    Therefore, every element of $(M,G]_\rk$ is a union of $G$-conjugacy classes.  
    Claim (1) follows immediately, and (2) follows from \ref{pnnm1}(2).
  \end{proof}

  In \cite{MiMo} the minimal non-abelian finite groups are described. 
  For our purposes, the following consequence will suffice.

  \begin{Proposition} \label{mm}
    If $G$ is a minimal non-abelian finite group, then either $G$ has prime power order, 
    or $G=CA$, where $A$ is a normal elementary abelian subgroup, $C$ is cyclic and both $A$ and $C$ have prime power order.
  \end{Proposition}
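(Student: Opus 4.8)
The plan is to split on whether $G$ is nilpotent. If $G$ is nilpotent, then $G$ is the internal direct product of its Sylow subgroups; were there two or more nontrivial factors, each factor would be a proper, hence abelian, subgroup, and $G$ would be abelian. Thus a nilpotent minimal non-abelian group is a $p$-group, which is the first alternative (prime power order). So the entire content lies in the non-nilpotent case, which I treat below.

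Since $G$ is non-nilpotent but every proper subgroup is abelian and hence nilpotent, $G$ is a minimal non-nilpotent group. By Schmidt's classical description of such groups (equivalently, by the Miller--Moreno classification in \cite{MiMo}), $|G|=p^aq^b$ for distinct primes $p\neq q$, $G$ has a normal Sylow $p$-subgroup $P$ and a cyclic Sylow $q$-subgroup $Q$, and $G=PQ$. Setting $A:=P$ and $C:=Q$ we obtain $A$ normal of prime power order, $C$ cyclic of prime power order, and $G=CA$; it remains only to show that $A=P$ is elementary abelian. As $P$ is a proper subgroup it is abelian, so I must prove that $P$ has exponent $p$.

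For this I would exploit coprime action. Because $G=PQ$ is not nilpotent, $Q$ does not centralize $P$, so $W:=[P,Q]\neq 1$; and since the cyclic $q$-group $Q$ acts coprimely on the abelian $p$-group $P$, one has the decomposition $P=C_P(Q)\times W$ together with $[W,Q]=W$. Note that $C_P(Q)\leq Z(G)$, since its elements commute with $P$ (as $P$ is abelian) and with $Q$ (by definition). Now consider the $Q$-invariant subgroup $WQ$, of order $|W|\,q^b$. If $C_P(Q)\neq 1$, then $WQ$ is proper, hence abelian, which forces $[W,Q]=1$ and contradicts $[W,Q]=W\neq 1$. Therefore $C_P(Q)=1$ and $P=W$. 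Finally, $\Phi(P)=P^p$ is characteristic in $P$ and hence normal in $G$; as $P$ is a nontrivial $p$-group, $\Phi(P)Q$ is a proper, hence abelian, subgroup, so $Q$ centralizes $\Phi(P)$ and $\Phi(P)\leq C_P(Q)=1$. This says precisely that $P$ is elementary abelian, completing the second alternative.

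The only genuinely external input is the two-prime Schmidt skeleton of minimal non-nilpotent groups (two prime divisors, a normal Sylow subgroup, and cyclic complementary Sylow subgroup); granting that, the remainder is the short coprime-action argument above. I therefore expect the main obstacle to be organizing these structural facts, in particular justifying $P=C_P(Q)\times[P,Q]$ and $[[P,Q],Q]=[P,Q]$ for the coprime action of $Q$ on $P$. I would either cite Schmidt's theorem together with \cite{MiMo} for that skeleton, or reprove it by a minimal-counterexample argument establishing that $G$ has exactly two prime divisors with a normal Sylow subgroup.
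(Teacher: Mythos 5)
Your proof is correct, but it is worth noting how it relates to the paper's treatment: the paper gives no argument at all for this proposition, simply citing the Miller--Moreno description of minimal non-abelian groups \cite{MiMo} and extracting the stated consequence. Your route is genuinely different in that it reduces the external input to a weaker statement --- the Schmidt skeleton for minimal non-nilpotent groups (two prime divisors, normal Sylow $p$-subgroup $P$, cyclic Sylow $q$-subgroup $Q$) --- and then supplies the remaining content yourself. The nilpotent case is handled correctly (a nontrivial direct product of proper, hence abelian, subgroups would be abelian), and your coprime-action argument in the non-nilpotent case is sound: $P=C_P(Q)\times[P,Q]$ with $[[P,Q],Q]=[P,Q]$ are standard for a coprime action on an abelian group; if $C_P(Q)\neq 1$ then $[P,Q]Q$ is proper, hence abelian, contradicting $[[P,Q],Q]=[P,Q]\neq 1$; and then $\Phi(P)Q$ proper forces $\Phi(P)\leq C_P(Q)=1$, so $P$ is elementary abelian. (The observation $C_P(Q)\leq Z(G)$ is stated but never actually used; you could drop it.) What each approach buys: the paper's citation is maximally short and defers everything to the classification literature, whereas your argument isolates precisely which structural facts are needed and derives the elementary-abelianness of $A$ from the minimal non-abelian hypothesis rather than quoting it --- at the cost of still invoking Schmidt's theorem, whose proof is itself not elementary. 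Either way the dependence on an external classification-type result remains, as you candidly acknowledge in your final paragraph.
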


  \begin{Lemma} \label{minp}
    Let $p$ be a prime and let $G$ be a minimal non-abelian $p$-group.  The lattice $\srl(G)$ is graded if and only if $|G|=8$.
  \end{Lemma}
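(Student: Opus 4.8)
The plan is to prove the two implications separately, the reverse one being the substantial part. For the forward direction, if $|G|=8$ then $G$ is one of the only two non-abelian groups of that order, $D_8$ or $Q_8$ (both minimal non-abelian), so $\srl(G)$ is graded by \ref{s3d8q8}. For the converse I assume $G$ is a minimal non-abelian $p$-group with $|G|=p^n>8$ (so $n\geq 3$, and $n\geq 4$ when $p=2$) and exhibit two maximal chains of $\srl(G)$ of different lengths. Here I would invoke the classical structure of such $G$ (Rédei): $|G'|=p$ with $G'\leq Z(G)=\Phi(G)$ and $|G:Z(G)|=p^2$, so $|Z(G)|=p^{n-2}$, and every maximal subgroup $M$ is abelian, normal of index $p$, and contains $Z(G)$. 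Since $G'\leq Z(G)$ has order $p$, each non-central $x$ has $\cl_G(x)=xG'$ of size $p$ while central elements give singleton classes; hence the number of conjugacy classes is $c=p^{n-2}+(p^n-p^{n-2})/p=p^{n-3}(p^2+p-1)$.

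The first chain passes through a maximal subgroup $M$. As $M$ is abelian, every subset of $M$ is a subrack, so $[\emptyset,M]_\rk=\srl(M)=2^M$ is Boolean and a maximal chain inside it has length $|M|=p^{n-1}$. As $M$ is normal, \ref{maxsg}(1) shows every maximal chain of $[M,G]_\rk$ has length $c_M$, the number of classes disjoint from $M$; counting the non-central elements outside $M$ gives $c_M=p^{n-1}(p-1)/p=p^{n-2}(p-1)$. Concatenating a maximal chain of $[\emptyset,M]_\rk$ with one of $[M,G]_\rk$ produces a maximal chain of $\srl(G)$ of length $p^{n-1}+p^{n-2}(p-1)=p^{n-2}(2p-1)$.

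For the second chain I would pick $a,b$ projecting to a basis of $G/\Phi(G)\cong(\ZZ/p)^2$, so $\langle a,b\rangle=G$ and $a,b$ are non-central. First I build $\cl_G(a)=aG'$ by adjoining one element at a time: since $aG'$ lies in the abelian subgroup $\langle a\rangle Z(G)$, every partial subset is a subrack and each single-element enlargement is a covering relation, so reaching $\cl_G(a)$ costs $p$ covers. Adjoining $b$ then forces the whole coset $bG'=\cl_G(b)$ (conjugating $b$ by $a$ runs through $bG'$), and the subrack so generated is exactly the closed set $\cl_G(a)\cup\cl_G(b)$, which generates $G$; this is a single cover. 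From this closed, $G$-generating subrack, every larger subrack is a union of conjugacy classes by \ref{le:groupcase}, so the remaining interval up to $G$ is Boolean on the $c-2$ unused classes and contributes $c-2$ covers. This second maximal chain therefore has length $p+1+(c-2)=c+p-1$.

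Comparing the two lengths, the difference is
\[
  p^{n-2}(2p-1)-(c+p-1)\;=\;(p-1)\bigl(p^{n-3}(p-1)-1\bigr),
\]
which vanishes precisely when $p^{n-3}(p-1)=1$, that is, exactly when $p=2$ and $n=3$, i.e.\ $|G|=8$. For $|G|>8$ the two maximal chains have different lengths, so $\srl(G)$ is not graded, which finishes the converse. The hard part will be the verification of the second chain rather than the arithmetic: one must check that $aG'$ can be filled one element at a time (this relies on $G'\leq Z(G)$, which puts $aG'$ inside an abelian subgroup), that the passage from $\cl_G(a)$ to $\cl_G(a)\cup\cl_G(b)$ is a single covering relation and not a longer jump, and that once a $G$-generating closed subrack is reached the top interval is genuinely Boolean. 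The class count and the computation of the difference are then routine, the only external input being the structure theory of minimal non-abelian $p$-groups.
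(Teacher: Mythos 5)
Your proposal is correct and takes essentially the same route as the paper: two maximal chains, one through an abelian normal maximal subgroup (length $p^{n-1}+p^{n-2}(p-1)$ via \ref{maxsg}) and one built by filling a $p$-element conjugacy class one element at a time, jumping to a closed generating subrack, and then adding the remaining classes (length $c+p-1$), followed by the same arithmetic forcing $p=2$, $n=3$. The only cosmetic differences are that you import R\'edei's structure theorem ($|G'|=p$, $Z(G)=\Phi(G)$ of index $p^2$) where the paper derives the needed facts directly from intersections of abelian maximal subgroups, and you choose generators via the Frattini quotient where the paper takes any two non-commuting elements.
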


  \begin{proof}
    Assume that $G$ is minimal non-abelian of order $p^k$ and that $\srl(G)$ is graded. Note that $k \geq 3$.

    As $G$ is not cyclic, $G$ has more than one maximal subgroup.  Let $A,B$ be maximal subgroups of $G$.  
    Then both $A$ and $B$ are abelian normal subgroups of index $p$ in $G$.  As $G=AB$, it follows that $Z:=A \cap B$ is 
    contained in the center $Z(G)$ of $G$. As $G$ is non-abelian, $[G:Z(G)] \geq p^2$. From $[G:Z]=p^2$ it follows that $Z=Z(G)$.  
    Since $A,B$ were arbitrary maximal subgroups, we see that $Z \leq M$ for every maximal subgroup $M$ of $G$.  

    Observe that if $g \in G \setminus Z$ then $|\cl_G(g)|=p$.  Indeed, any maximal subgroup of $G$ containing $g$ is abelian and thus is the centralizer of $g$.  
    It follows that if $A$ is any maximal subgroup of $G$, then $$\frac{p^k-p^{k-1}}{p}=p^{k-1}-p^{k-2}$$ $G$-conjugacy classes intersect $A$ trivially.  
    As $A$ is abelian and normal in $G$, it follows from \ref{maxsg} that $\srl(G)$ has a maximal chain of length $2p^{k-1}-p^{k-2}$ 

    Now fix two elements $x,y \in G$ that do not commute.  As $G$ is minimal non-abelian, $G=\langle x,y \rangle$.  Let $M$ be a maximal subgroup of $G$ 
    containing $x$. Note that $M \lhd G$, $Z \leq M$, and $y \not\in M$.

    As $G=\langle x,y \rangle$ is non-abelian, $x \not\in Z$.  So, $|\cl_G(x)|=p$.  As $x \in M$ and $M$ is abelian, every subset of $\cl_G(x)$ is a subrack of $G$.  
    In particular, there exists a chain $\{R_i:0 \leq i \leq p\}$ of subracks of $\cl_G(x)$ such that $|R_i|=i$ for each $i$. In particular, we have $R_p=\cl_G(x)$.  

    If $Q$ is any subrack of $G$ containing $R_p$ and $y$, then $\langle Q \rangle=G$ and thus $Q$ is a union of $G$-conjugacy classes.  
    It follows that $Q_y:=R_p \cup \cl_G(y)$ covers $R_p$ in $\srl(G)$ and that any saturated chain in $[Q_y,G]_\rk$ is obtained by adding, one at a time, 
    the $G$-conjugacy classes other than $\cl_G(x)$ and $\cl_G(y)$ to $Q_y$. As $G$ has $$p^{k-2}+\frac{p^k-p^{k-2}}{p}=p^{k-1}+p^{k-2}-p^{k-3}$$ conjugacy classes, 
    it follows now that $\srl(G)$ has a chain of length $$p+p^{k-1}+p^{k-2}-p^{k-3}-1.$$  

    Since $\srl(G)$ is graded, we conclude that $$2p^{k-1}-p^{k-2}=p^{k-1}+p^{k-2}-p^{k-3}+p-1,$$ equivalently, that $$p^{k-3}(p-1)^2=p-1,$$ which forces $p=2$ and $k=3$ as claimed.
  \end{proof}

  \begin{Lemma} \label{minsdp}
    Let $p,q$ be primes and let $G=CA$ be a minimal non-abelian group such that $C \leq G$ is a cyclic $p$-subgroup and $A \lhd G$ is an elementary 
    abelian normal $q$-subgroup. The lattice $\srl(G)$ is graded if and only if $G \cong S_3$.
  \end{Lemma}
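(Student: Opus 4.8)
The plan is to produce, for every such $G$ that is not $S_3$, two maximal chains in $\srl(G)$ of different lengths; the converse, that $\srl(S_3)$ is graded, is already recorded in \ref{s3d8q8}. Since the case $p=q$ makes $G$ a $p$-group, which is settled by \ref{minp}, I assume throughout that $p\neq q$, so that $C\cap A=1$ and $G=C\ltimes A$ with $C=\langle c\rangle$ of order $p^a$ and $A$ elementary abelian of order $q^b$.

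First I would pin down the structure forced by minimal non-abelianity. The subgroup $\langle c^p\rangle A$ has index $p$, hence is proper and therefore abelian; thus $c^p$ centralizes $A$ and $c$ acts on $A$ through an automorphism of order exactly $p$. For any proper $\langle c\rangle$-invariant subgroup $B<A$ the subgroup $CB$ is proper, hence abelian, so $B\subseteq C_A(c)$; combining this with Maschke's theorem (valid as $p\neq q$) I would conclude that $c$ acts fixed-point-freely and that $A$ is an irreducible $\langle c\rangle$-module. Two consequences drive the rest: $Z(G)=\langle c^p\rangle$ has order $p^{a-1}$, and the maximal subgroups of $G$ are exactly $M_0:=\langle c^p\rangle A$ (abelian, normal, of index $p$) together with the $q^b$ conjugates of $C$ (abelian, self-normalizing since $N_G(C)=C$, and non-normal). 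Maximality of $C$ uses irreducibility of $A$ again.

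Next I would count conjugacy classes by their intersection with these two maximal subgroups, using that each class lies in a single coset $c^iA$ because conjugation is trivial on the abelian quotient $G/A$. A coset $c^iA$ with $p\nmid i$ is a single class of size $q^b$, while a coset $c^iA$ with $p\mid i$ splits into the central singleton $\{c^i\}$ and $(q^b-1)/p$ classes of size $p$ (here fixed-point-freeness gives $p\mid q^b-1$). From this bookkeeping the number $c_0$ of classes disjoint from $M_0$ is $p^{a-1}(p-1)$, and the number $c_1$ of classes disjoint from $C$ is $p^{a-2}(q^b-1)$.

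Finally I would assemble the chains. Since $M_0$ and $C$ are abelian, every subset of each is a subrack, so $[\emptyset,M_0]_\rk\cong 2^{M_0}$ and $[\emptyset,C]_\rk\cong 2^{C}$ supply maximal chains of lengths $|M_0|=p^{a-1}q^b$ and $|C|=p^a$. Appending maximal chains of $[M_0,G]_\rk$ and $[C,G]_\rk$, whose lengths are $c_0$ and $c_1+1$ by \ref{maxsg}(1) and (2), yields maximal chains of $\srl(G)$ of total lengths
\[
  \ell_A=p^{a-1}q^b+p^{a-1}(p-1),\qquad \ell_B=p^a+p^{a-2}(q^b-1)+1.
\]
Gradedness forces $\ell_A=\ell_B$, i.e.\ $\ell_A-\ell_B=p^{a-2}(p-1)(q^b-1)-1=0$, so $(p-1)(q^b-1)=p^{2-a}$. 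As the left-hand side is a positive integer and $p\neq q$, a short case analysis on $a$ (with $a\geq 3$ and $a=2$ ruled out, the latter because it forces $q=p$) leaves only $a=1$, $p=2$, $q^b=3$, that is $G\cong S_3$. I expect the main obstacle to be the structural step: establishing fixed-point-freeness and irreducibility of the action and deducing that the only maximal subgroups are $M_0$ and the conjugates of $C$; the class count and the closing arithmetic are then routine.
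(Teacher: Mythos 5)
Your proposal is correct and takes essentially the same approach as the paper: the paper likewise applies \ref{maxsg} to the two maximal subgroups $\langle c^p\rangle A$ (abelian, normal) and $C$ (non-normal, self-normalizing), obtains maximal chains of lengths $p^{a-1}(q^b+p-1)$ and $p^a+p^{a-2}(q^b-1)+1$ from the same conjugacy class counts, and extracts $G\cong S_3$ from the resulting equation by the same arithmetic, with the converse delegated to \ref{s3d8q8} in both cases. The only divergence is the case $p=q$, which you delegate to \ref{minp} while the paper excludes it by asserting that minimal non-abelianity forces $C$ to act irreducibly on $A$; neither treatment is airtight, since $D_8$ is minimal non-abelian, decomposes as $CA$ with $A$ a Klein four subgroup and $C$ generated by a non-central involution (so $p=q=2$), and has graded subrack lattice by \ref{s3d8q8} --- so the lemma tacitly assumes $p\neq q$, a defect of the statement that both proofs share rather than a gap peculiar to yours.
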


  \begin{proof}
    Let $c$ generate $C$ and consider $A$ to be a vector space over $\FF_q$. Assume that $\dim_{\FF_q} A=\ell$, so $|A|=q^\ell$, and that $|C|=p^k$.

    As $G$ is minimal non-abelian, $c^p$ must centralize $A$ and $C$ must act irreducibly on $A$. It follows that either $p \neq q$ or
    $p=q$ and $A = \FF_p$. But in the latter case $C$ centralizes $A$ and $G$ is abelian. Hence $p \neq q$ and $C_A(c)=1$.  
    Therefore, $C$ has $q^\ell$ $G$-conjugates, and each of the $q^\ell(p^k-p^{k-1})$ elements of order $p^k$ in $G$ is conjugate to exactly one generator of $C$.  
    Thus the elements of $G$ having order $p^k$ form $p^k-p^{k-1}$ conjugacy classes.

    The remaining $p^{k-1}q^\ell$ elements of $G$ lie in the normal, abelian maximal subgroup $M:=\langle c^p,A \rangle$ of $G$.  The elements of 
    $\langle c^p \rangle$ are central in $G$, and the remaining elements of $M$ form $p^{k-2}(q^\ell-1)$ conjugacy classes, each class having size $p$.

    Applying \ref{maxsg} to 
    \begin{itemize} 
      \item $M$, we see that $\srl(G)$ has a maximal chain of length $$p^{k-1}q^\ell+p^k-p^{k-1}=p^{k-1}(q^\ell+p-1).$$

      \item the non-normal maximal subgroup $C$, we see that $\srl(G)$ has a maximal chain of length $$p^k+1+p^{k-2}(q^\ell-1).$$
    \end{itemize}

    Assuming $\srl(G)$ is graded, we see that $$p^{k-1}(q^\ell+p-1)=p^k+1+p^{k-2}(q^\ell-1).$$  If $k>1$, we get a contradiction from reduction 
    modulo $p$.  Therefore, if $\srl(G)$ is graded then $k=1$ and $$q^\ell+p-1=p+1+\frac{q^\ell-1}{p},$$ from which it follows that 
    $$\frac{q^\ell}{2}<2.$$  Therefore, $q^\ell=3$ and $p^k=2$ and $G = S_3$. The Lemma now follows from \ref{s3d8q8}.
  \end{proof}

  Combining \ref{mm}, \ref{minp} and \ref{minsdp}, we obtain the following result.

  \begin{Proposition} \label{mns}
    If $G$ is a non-abelian finite group and $\srl(G)$ is graded, then every minimal non-abelian subgroup of 
    $G$ is isomorphic to one of $S_3$, $D_8$ or $Q_8$. In particular, $|G|$ is even.
  \end{Proposition}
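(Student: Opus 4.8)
The plan is to reduce everything to the three structural lemmas \ref{mm}, \ref{minp} and \ref{minsdp} already in hand, by exploiting the fact that gradedness of $\srl(G)$ is inherited by the subrack lattices of subgroups of $G$. The key structural input is that for any subgroup $H \leq G$ one has $\srl(H) = [\emptyset,H]_\rk$ as posets. Indeed, $H$ is itself a subrack of $G$, being closed under conjugation by its own elements, and for a subset $Q \subseteq H$ the two defining conditions of a subrack are tested by conjugation computed inside $G$ regardless of whether $Q$ is viewed inside $H$ or inside $G$. Hence $Q$ is a subrack of $H$ precisely when it is a subrack of $G$ contained in $H$, so the subracks of $H$ are exactly the elements of the lower interval of $\srl(G)$ below $H$.

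The next step is to verify that gradedness passes to such lower intervals. Suppose $\srl(G)$ is graded, so all its maximal chains share a common length. Fixing a saturated chain from $\emptyset$ to $H$ and extending it by an arbitrary saturated chain from $H$ to $G$, one sees first that all saturated chains from $H$ to $G$ have one and the same length, and then that all saturated chains from $\emptyset$ to $H$ have a common length, say $r$. A maximal chain of the interval $[\emptyset,H]_\rk$ is exactly a saturated chain from $\emptyset$ to $H$ in $\srl(G)$, since any intermediate element would lie below $H$ and hence already in the interval; therefore every maximal chain of $[\emptyset,H]_\rk$ has length $r$, and $\srl(H)$ is graded whenever $\srl(G)$ is.

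With these two steps in hand the proof concludes quickly. Since $G$ is non-abelian it contains a minimal non-abelian subgroup $H$ (take a non-abelian subgroup of least order), and $\srl(H)$ is graded by the above. By \ref{mm}, $H$ either has prime power order or is of the form $CA$ with $A$ a normal elementary abelian subgroup and $C$ cyclic, both of prime power order. In the former case \ref{minp} forces $|H|=8$, so $H$ is one of the two non-abelian groups of that order, $D_8$ or $Q_8$; in the latter case \ref{minsdp} forces $H \cong S_3$. Applying this to every minimal non-abelian subgroup yields the first assertion, and since each of $S_3$, $D_8$, $Q_8$ has even order, Lagrange's theorem applied to one such subgroup gives that $|G|$ is even.

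I expect the only point requiring care to be the inheritance of gradedness in the second step; it rests on the elementary but essential fact that covering relations between subracks contained in $H$ are the same whether taken in $[\emptyset,H]_\rk$ or in all of $\srl(G)$, which is what allows the common length of $\emptyset$-to-$H$ chains to be read off from gradedness of the ambient lattice. Once this is granted, the statement is a direct application of the preceding three results.
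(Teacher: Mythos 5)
Your proof is correct and follows essentially the same route as the paper, which simply states that the proposition is obtained by combining \ref{mm}, \ref{minp} and \ref{minsdp}. The detail you rightly flag as the crux --- that $\srl(H)=[\emptyset,H]_\rk$ for a subgroup $H\leq G$ and that gradedness of $\srl(G)$ descends to this lower interval via concatenation of saturated chains --- is exactly the step the paper leaves implicit (and uses again implicitly, e.g.\ in the proofs of \ref{nab2} and \ref{small}), and your verification of it is sound.
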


  This immediately implies. 

  \begin{Corollary} \label{oddorder}
    Let $G$ be a group of odd order.  The lattice $\srl(G)$ is graded if and only if $G$ is abelian.
  \end{Corollary}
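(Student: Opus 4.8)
The plan is to prove the two implications separately, observing at the outset that essentially all of the work has already been packaged into \ref{mns}. For the reverse implication, if $G$ is abelian then every subset of $G$ is a subrack (as noted in the proof of \ref{main}(1)), so $\srl(G)=2^G$ is a Boolean algebra. A Boolean algebra is graded, since every maximal chain from $\emptyset$ to $G$ adds one element at a time and hence has length $|G|$. This direction uses nothing about the parity of $|G|$.

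For the forward implication I would argue by contraposition: assuming $G$ has odd order and is non-abelian, I want to conclude that $\srl(G)$ is \emph{not} graded. The key point is that a non-abelian finite group always contains a minimal non-abelian subgroup, namely a non-abelian subgroup of smallest possible order (every proper subgroup of such a group is then abelian). If $\srl(G)$ were graded, then \ref{mns} would apply and force every minimal non-abelian subgroup of $G$ to be one of $S_3$, $D_8$ or $Q_8$; since each of these has even order, we would get $2 \mid |G|$, contradicting the hypothesis that $|G|$ is odd. Therefore $\srl(G)$ cannot be graded, which is exactly the contrapositive of what is needed.

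There is no genuine obstacle here, as the corollary is an immediate consequence of the ``$|G|$ is even'' conclusion of \ref{mns}; the only thing worth spelling out is the elementary fact that a non-abelian group has a minimal non-abelian subgroup, so that \ref{mns} indeed has content to apply. Combining the two directions yields the stated equivalence.
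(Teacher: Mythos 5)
Your proof is correct and follows essentially the same route as the paper, which simply notes that the corollary is an immediate consequence of \ref{mns} (whose ``in particular, $|G|$ is even'' clause is exactly the contrapositive step you spell out). Your additional details — that an abelian group has Boolean, hence graded, subrack lattice, and that a non-abelian group contains a minimal non-abelian subgroup — are the same implicit ingredients the paper relies on.
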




  \ref{minp} allows us to determine the structure of a $2$-group whose subrack lattice is graded.

  \begin{Lemma} \label{nab2}
    If $G$ is a non-abelian $2$-group and $\srl(G)$ is graded, then $|G|=8$.
  \end{Lemma}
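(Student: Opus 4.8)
The plan is to reduce the statement to the single case $|G|=16$ and then to extract a numerical contradiction from gradedness. The tool that makes all the reductions possible is an inheritance principle: for any subgroup $H\le G$, the interval $[\emptyset,H]_\rk$ in $\srl(G)$ is exactly $\srl(H)$, since a subset of $H$ is a subrack of $G$ if and only if it is a subrack of $H$. Because every interval of a graded lattice is graded, the hypothesis that $\srl(G)$ is graded forces $\srl(H)$ to be graded for every subgroup $H\le G$.

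First I would carry out the reduction. Suppose $G$ is a non-abelian $2$-group with $|G|>8$, and choose a minimal (under inclusion) non-abelian subgroup $K$. Then $K$ is minimal non-abelian, hence isomorphic to one of $S_3$, $D_8$, $Q_8$ by \ref{mns}; as $K$ is a $2$-group it is $D_8$ or $Q_8$, so $|K|=8$. In a $p$-group a proper subgroup is properly contained in its normalizer, so $K\lneq N_G(K)$ and $|N_G(K)|\ge 16$. Using that $2$-groups have subgroups of every order dividing the group order, I would pick $L$ with $K\le L\le N_G(K)$ and $|L|=16$; then $L$ is a non-abelian subgroup of order $16$ and $\srl(L)$ is graded by the inheritance principle. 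Hence it suffices to prove that no non-abelian group of order $16$ has a graded subrack lattice; granting this, $|G|>8$ is impossible and the lemma follows.

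Now let $G$ be non-abelian of order $16$. If $G$ is minimal non-abelian, then $\srl(G)$ is not graded by \ref{minp}, so I may assume $G$ has a non-abelian maximal subgroup $N$; being of index $2$ it is normal, and by the inheritance principle together with \ref{minp} it is one of $D_8$ or $Q_8$. Every non-abelian group of order $16$ also possesses an abelian maximal subgroup $A$ (an abelian subgroup of index $2$; this is a finite check over the nine groups in question). All maximal subgroups of the $2$-group $G$ are normal, so \ref{maxsg}(1) applies to both $N$ and $A$: a maximal chain of $[N,G]_\rk$ has length $c_N$, and one of $[A,G]_\rk$ has length $c_A$, where $c_M$ denotes the number of $G$-conjugacy classes disjoint from $M$. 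Since $\srl(A)=2^A$ is Boolean, its maximal chains have length $8$, while those of $\srl(N)$ have length $5$ by \ref{s3d8q8}. Splicing a maximal chain of $\srl(M)$ with one of $[M,G]_\rk$ produces a maximal chain of $\srl(G)$, so gradedness of $\srl(G)$ forces
\[
  5+c_N \;=\; 8+c_A,\qquad\text{that is,}\qquad c_N=c_A+3 .
\]

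The main obstacle is to rule out the equality $c_N=c_A+3$. I would settle it by analysing how the sixteen elements of $G$ split into $G$-conjugacy classes across the two index-$2$ cosets $G\setminus N$ and $G\setminus A$, each of size eight. For the abelian $A$, conjugation by any $x\notin A$ induces one and the same involutory automorphism $\theta$ of $A$, the $A$-conjugacy orbits inside the coset $Ax$ are the cosets of $\{b\theta(b)^{-1}:b\in A\}$, and a short computation shows their number equals $|C_A(x)|=|\Fix(\theta)|\le 4$; this bounds $c_A$ both above and below. A parallel but slightly more delicate count for the non-abelian $N$, using the rigid class structure of $D_8$ and $Q_8$, bounds $c_N$ from above. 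In every configuration the gap $c_N-c_A$ is strictly smaller than $3$ (it equals $1$ in the representative cases $D_{16}$, $Q_{16}$ and $D_8\times C_2$), so the displayed equality cannot hold. Since there are only nine non-abelian groups of order $16$, this step may, if preferred, be discharged as a finite verification rather than a uniform count; either way it shows $\srl(G)$ is not graded when $|G|=16$, which completes the proof that $|G|=8$.
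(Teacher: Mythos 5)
Your strategy coincides with the paper's: reduce to $|G|=16$ using minimal non-abelian subgroups and the fact that in a $2$-group every subgroup of order $8$ lies in one of order $16$, then compare spliced maximal chains through an abelian maximal subgroup $A$ and a non-abelian maximal subgroup $N$ via \ref{maxsg}(1). But your key equation $5+c_N=8+c_A$ rests on a genuine error: it mixes two incompatible conventions for chain length. The value $8$ for $\srl(A)=2^A$ and the value $c$ in \ref{maxsg} both count covering steps, and in that convention every maximal chain of $\srl(D_8)$ or $\srl(Q_8)$ has length $6$, not $5$: in $D_8=\langle r,s \mid r^4=s^2=1,\ srs=r^{-1}\rangle$ the chain $\emptyset<\{s\}<\{s,r^2s\}<\{s,r^2s,1\}<\{s,r^2s,1,r^2\}<\{s,r^2s,1,r^2,r,r^3\}<D_8$ is maximal with six covering steps. (The ``$5$'' in \ref{s3d8q8} is an off-by-one artifact of a different counting; the paper's own computations in the proofs of \ref{nab2} and of \ref{small}(2) use $6$ as the base length for $D_8$ and $Q_8$.)

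Consequently gradedness actually forces $6+c_N=8+c_A$, i.e.\ $c_N=c_A+2$, and this breaks the way you propose to finish: proving that the gap $c_N-c_A$ is \emph{strictly smaller than} $3$ no longer yields a contradiction, because a gap of $2$ would be perfectly consistent with that equation. What is needed is the exact value of the gap, and that is precisely what the paper computes: writing $Z=Z(G)$, one gets $c_A=|Z|$ (your fixed-point count $|\Fix(\theta)|=|C_A(x)|$ shows this), and from $|N\cap A|=4$, $|N\cap Z|=2$ one gets $c_N=(|Z|-2)+\bigl(3-\tfrac{|Z|}{2}\bigr)+\tfrac{|Z|}{2}=|Z|+1$, so the gap is always exactly $1\neq 2$. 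Your parenthetical remark that the gap equals $1$ in $D_{16}$, $Q_{16}$ and $D_8\times\ZZ_2$ is the right fact, and your fallback of a finite verification over all nine non-abelian groups of order $16$ would also close the argument, but only against the corrected threshold $2$; as written, the derivation of your target equation is false and the bound you plan to prove does not suffice.
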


  \begin{proof}
    When showing that $\srl(G)$ is not graded if $|G|>8$, we may assume that $|G|=16$.  Indeed, under the assumptions 
    of the theorem, every minimal non-abelian subgroup of $G$ has order eight by \ref{minp}. Such a subgroup is contained in 
    some $H \leq G$ such that $|H|=16$, and $H$ satisfies the conditions of the theorem.

    Now, as $|G|=16$, $G$ has an abelian subgroup $A$ of order eight (see for example (4.26) in \cite{Suzuki}).  Moreover, $G$ 
    contains a minimal non-abelian subgroup $E$ of order eight.  

    We consider the conjugacy classes in $G$.  Let $Z=Z(G)$.  As $G$ is not abelian, we see that $Z \leq A$.  Let  
    $t \in G \setminus A$ then $\langle A,t\rangle = G$ and $G = A \langle t \rangle$. Since $G$ is non-abelian, we see that $Z=C_A(t)$. 
    It follows that $C_G(t)=\langle t \rangle Z$.  As $A$ has index $2$ in $G$ and $G = A\langle t \rangle$ we have $t^2 \in A$
    and hence $t^2 \in C_A(t)$. It follows that $[C_G(t):Z]=2$ and that $$|\cl_G(t)|= \frac{|G|}{2|Z|}.$$  
    From this we deduce that the elements of 
    $G \setminus A$ form $|Z|$ conjugacy classes of equal size. 

    If $a \in A \setminus Z$ then $C_G(a)=A$ and therefore $|\cl_G(a)|=2$.  Hence, the elements of $A$ form $|Z|$ conjugacy classes of size one 
    and $4-\frac{|Z|}{2}$ classes of size two.

    We now exhibit two maximal chains of different length, from which it follows that $\srl(G)$ is not graded.

    \begin{itemize}
      \item An application of \ref{maxsg} to $A$ yields a maximal chain in $\srl(G)$ of length $8+|Z|$.
      \item Next we apply \ref{maxsg} to $E$.  Note that $|E \cap A|=4$ and $|E \cap Z|=2$.  Therefore, the elements of $A \setminus E$ 
         form $|Z|-2$ central conjugacy classes and $3-\frac{|Z|}{2}$ non-central classes.  The elements of $G \setminus (A \cup E)$ 
         form $\frac{|Z|}{2}$ classes, as these are half of the elements of $G \setminus A$.  We see now that $\srl(G)$ 
         contains a chain of length $$6+(|Z|-2)+(3-\frac{|Z|}{2})+\frac{|Z|}{2}=7+|Z|.$$
    \end{itemize}

  \end{proof}

  With the local structure of a group with graded subrack lattice in hand, we turn to the global structure.

  \begin{Lemma} \label{npc}
    Let $G$ be a finite group such that $\srl(G)$ is graded.  
    If $p>3$ is a prime divisor of $|G|$, then $G$ has a normal $p$-complement.
  \end{Lemma}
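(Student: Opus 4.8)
The plan is to verify the hypothesis of Burnside's Normal $p$-Complement Theorem (\cite[Theorem 7.2.1]{KuSt}) for a Sylow $p$-subgroup $P$ of $G$, i.e.\ to show that $P$ is abelian and $N_G(P)=C_G(P)$, whence $P\le Z(N_G(P))$ and $G$ acquires a normal $p$-complement. The two ingredients I expect to use are \ref{mns} and the elementary fact that every non-abelian finite group contains a minimal non-abelian subgroup (a non-abelian subgroup of least order is minimal non-abelian). If $G$ is abelian the conclusion is immediate, so I assume $G$ is non-abelian; then by \ref{mns} every minimal non-abelian subgroup of $G$ is isomorphic to $S_3$, $D_8$ or $Q_8$, all of order $6$ or $8$.

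First I would show that $P$ is abelian. Were $P$ non-abelian, it would contain a minimal non-abelian subgroup $E$, which would be a $p$-group; but no power of a prime $p>3$ equals $6$ or $8$, contradicting \ref{mns}. Thus $P$ is abelian, so $P\le C_G(P)\le N_G(P)$, the group $P$ is a Sylow $p$-subgroup of $N_G(P)$, and $N_G(P)/C_G(P)$ has order coprime to $p$.

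Suppose for contradiction that $N_G(P)\ne C_G(P)$. In the nontrivial group $N_G(P)/C_G(P)$ choose an element of prime order $q$; since this quotient has order coprime to $p$ we have $q\ne p$. Lifting to $N_G(P)$ and replacing the lift by its $q$-part, I obtain a $q$-element $y\in N_G(P)\setminus C_G(P)$. Set $H=P\langle y\rangle$. Then $P\lhd H$ and, because $q\ne p$, we have $P\cap\langle y\rangle=1$, so $|H|=p^aq^c$ with $P$ an abelian Sylow $p$-subgroup of $H$ and $\langle y\rangle$ a cyclic Sylow $q$-subgroup. In particular every $p$-subgroup and every $q$-subgroup of $H$ is abelian, while $H$ itself is non-abelian since $y$ does not centralize $P$.

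Finally, $H$ contains a minimal non-abelian subgroup $E$, and $E\cong S_3,D_8$ or $Q_8$ by \ref{mns}. Since $E$ is non-abelian it is neither a $p$-group nor a $q$-group, so $|E|$ is divisible by the two distinct primes $p$ and $q$; among $6$ and $8$ only $6$ has two distinct prime divisors, which forces $\{p,q\}=\{2,3\}$ and contradicts $p>3$. Hence $N_G(P)=C_G(P)$ and Burnside's theorem applies. I expect no serious obstacle here: once one passes to the two-prime subgroup $H$, the argument reduces to a short order comparison against the list $\{S_3,D_8,Q_8\}$; the only point requiring care is the routine extraction of the $q$-element $y$ and the verification that the Sylow subgroups of $H$ are as claimed.
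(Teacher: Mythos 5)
Your proof is correct and follows essentially the same route as the paper's: show that $P$ is abelian, produce a non-centralizing prime-power element $y\in N_G(P)$ with $q\neq p$, derive a contradiction by locating a minimal non-abelian subgroup of the non-abelian group $P\langle y\rangle$ that cannot be $S_3$, $D_8$ or $Q_8$, and then invoke Burnside's Normal $p$-Complement Theorem. The only cosmetic differences are that you route both contradictions through \ref{mns} (the paper cites \ref{oddorder} for the abelianness of $P$ and \ref{mm} together with \ref{minsdp} for the final contradiction) and that you extract $y$ from the quotient $N_G(P)/C_G(P)$ rather than from the Sylow decomposition of $\langle x\rangle$.
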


  \begin{proof}
    Let $P$ be a Sylow $p$-subgroup of $G$.  By \ref{oddorder}, $P$ is abelian.  
    Let $x \in N_G(P)$. 

    \noindent {\sf Claim:} $x \in C_G(P)$. 

    $\triangleleft$ Assume for contradiction that $x \not\in C_G(P)$.  As $\langle x \rangle$ is the product of its Sylow subgroups, some
    $y \in \langle x \rangle$ has prime power order and does not centralize $P$.  Say $|y|=q^k$ with $q$ prime.  
    Note that $q \neq p$.  The group $\langle y \rangle P$ is non-abelian and thus contains a minimal non-abelian subgroup $K$.  
    As $P$ and $\langle y \rangle$ are abelian,  $K$ is neither a $p$-group nor a $q$-group.  By \ref{mm} $K$ is the semi-direct 
    product of a elementary abelian normal $p$-subgroup and a cyclic $q$-group.  As $p>3$, this contradicts \ref{minsdp}.  
    It follows $x \in C_G(P)$. $\triangleright$ 

    Hence $P$ is central in $N_G(P)$. The lemma now follows from Burnside's Normal 
    $p$-Complement Theorem (see Theorem 7.2.1 in \cite{KuSt}). 
  \end{proof} 

  \begin{Proposition} \label{solvable}
    If $G$ is a finite group and $\srl(G)$ is graded, then $G$ is solvable.
  \end{Proposition}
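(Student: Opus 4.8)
The plan is to induct on $|G|$. The case where $G$ is abelian (in particular $G=1$) is immediate, since abelian groups are solvable; so I assume that $G$ is non-abelian, that $\srl(G)$ is graded, and that the statement holds for all groups of smaller order. I would then distinguish two cases according to the set of prime divisors of $|G|$.

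Suppose first that every prime dividing $|G|$ lies in $\{2,3\}$, so that $|G|=2^a3^b$. Here \ref{npc} provides no leverage, since it speaks only to primes $p>3$, and I would simply invoke Burnside's classical $p^aq^b$-theorem to conclude that $G$ is solvable. Equivalently, this is the statement that no non-abelian simple group has order divisible by only two primes, which is exactly what is needed in order to start the induction at these two small primes.

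Now suppose some prime $p>3$ divides $|G|$. By \ref{npc}, $G$ has a normal $p$-complement $N \lhd G$; then $G/N$ is a $p$-group and hence solvable, and since $[G:N]$ is a positive power of $p$ we have $|N|<|G|$. The crucial observation is that $\srl(N)$ is naturally identified with the interval $[\emptyset,N]_\rk$ of $\srl(G)$: as $N$ is normal it is a union of conjugacy classes, hence a closed subrack of $G$ lying in $\srl(G)$, and a subset $Q \subseteq N$ is closed under conjugation by its own elements precisely when it is a subrack of $N$, so the subracks of $G$ contained in $N$ are exactly the subracks of $N$. I would next check that this interval inherits gradedness: given two maximal chains of $[\emptyset,N]_\rk$, I extend each by one and the same maximal chain of $[N,G]_\rk$, obtaining two maximal chains of $\srl(G)$; these have equal length because $\srl(G)$ is graded, which forces the two original chains to have equal length. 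Hence $\srl(N)$ is graded, so $N$ is solvable by the inductive hypothesis, and $G$, being an extension of the solvable group $G/N$ by the solvable group $N$, is solvable.

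I expect the main obstacle to be the reduction step in the second case: identifying $\srl(N)$ with the interval $[\emptyset,N]_\rk$ of $\srl(G)$ and verifying that gradedness descends to this interval is what makes the induction run, and some care is needed because gradedness is only assumed in the weak sense that all maximal chains from $\emptyset$ to $G$ have the same length. The appeal to Burnside's $p^aq^b$-theorem in the first case is essentially unavoidable, as \ref{npc} yields nothing at the primes $2$ and $3$.
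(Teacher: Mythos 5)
Your proof is correct and takes essentially the same route as the paper: Burnside's $p^aq^b$ theorem handles the case of at most two prime divisors, and for a prime $p>3$ dividing $|G|$ one applies \ref{npc} to get a normal $p$-complement $N$ and concludes by induction (the paper inducts on the number of distinct prime divisors rather than on $|G|$, an immaterial difference). You additionally spell out a step the paper leaves implicit, namely that gradedness of $\srl(G)$ descends to $\srl(N)=[\emptyset,N]_\rk$ by splicing maximal chains through the closed subrack $N$.
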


  \begin{proof}
    We prove $G$ is solvable by induction on the number $\pi$ of distinct prime divisors of $|G|$.  If $\pi \leq 2$ then $G$ is 
    solvable by Burnside's $p^aq^b$ Theorem (see \cite[10.2.1]{KuSt}).  Now assume $\pi>2$. Then $G$ must have a prime divisor $p>3$. 
    By \ref{npc}, $G=PN$, where $P$ is a Sylow $p$-subgroup of $G$ and $N$ is a normal $p$-complement. As $\srl(N)$ is graded, 
    $N$ is solvable by inductive hypothesis. Moreover, $G/N$ is a $p$-group and therefore solvable.  Thus $G$ is solvable.  
  \end{proof}

  Now we use \ref{maxsg} to prove that certain groups,
  which will appear later in our argument, do not have graded subrack
  lattices.

  \begin{Lemma} \label{small}
    Let $p$ be a prime.  The subrack lattice $\srl(G)$ is not graded if $G$ is isomorphic to one of 
    \begin{enumerate} 
      \item $S_3 \times \ZZ_p$, 
      \item $D_8 \times \ZZ_p$ or $Q_8 \times \ZZ_p$, 
      \item a dihedral group of order $2k$ with $k>3$ odd, 
      \item the semi-direct product $TV$, where $T= \langle t \rangle$ has order two, 
            $V \cong \ZZ_3 \oplus \ZZ_3$ and $t^{-1}vt=v^{-1}$ for all $v \in V$, or 
      \item $\SL_2(3)$. \end{enumerate} 
  \end{Lemma}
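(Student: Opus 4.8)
The plan is to handle each of the five groups by exhibiting two maximal chains in $\srl(G)$ of different lengths, using \ref{maxsg} as the main tool. For each group I first compute the conjugacy class structure (number of classes and which classes meet a given maximal subgroup), then apply \ref{maxsg} to two well-chosen maximal subgroups --- one normal and one non-normal when possible --- to produce chains whose lengths differ. Since \ref{maxsg}(1) gives length $c$ for a normal maximal subgroup and \ref{maxsg}(2) gives length $c+1$ for a non-normal one (where $c$ counts the classes avoiding the subgroup), the key is to arrange that the resulting two numbers disagree.

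For the direct products in (1) and (2), I would exploit the isomorphism $\srl(R \cup Z) \cong \srl(R) \times 2^Z$ from \ref{le:product}(1): tensoring a non-graded lattice with a Boolean algebra $2^{\ZZ_p}$ (coming from the central factor $\ZZ_p$, whose nonidentity elements are central) keeps it non-graded, so gradedness of $\srl(S_3 \times \ZZ_p)$ would force gradedness of $\srl(S_3)$ --- but the shift in chain lengths contributed by the central factor is uniform, so I must check the two chains from \ref{s3d8q8} still differ after the central elements are adjoined. Actually the cleanest route is to note that the central $\ZZ_p$ contributes the same additive constant to every maximal chain, so $\srl(G)$ is graded if and only if $\srl(S_3)$ (resp. $\srl(D_8)$, $\srl(Q_8)$) is; since those are graded, I cannot use this directly and must instead recompute. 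Let me reconsider: the non-central classes of $S_3 \times \ZZ_p$ are the products of noncentral $S_3$-classes with $\ZZ_p$-elements, so the class structure genuinely changes, and I would apply \ref{maxsg} to the maximal subgroups $\langle (S_3)', \ZZ_p\rangle$-type normal subgroup and a non-normal $\langle\tau\rangle \times \ZZ_p$ to get two differing chain lengths. For (3), the dihedral group $D_{2k}$ with $k>3$ odd: the cyclic $\ZZ_k$ is a normal maximal subgroup and a reflection generates a non-normal maximal subgroup of order $2$, and I compute that the number of classes meeting each, plugged into \ref{maxsg}, yields different lengths precisely because $k>3$.

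For (4), the group $TV$ of order $18$, I would take the normal maximal subgroup $V \cong \ZZ_3 \oplus \ZZ_3$ and a non-normal maximal subgroup of order $6$ (a copy of $S_3$), count the conjugacy classes avoiding each, and verify the two lengths from \ref{maxsg} differ. For (5), $\SL_2(3)$ of order $24$, I would use its known conjugacy class data (it has $7$ classes, with the quaternion $Q_8$ as normal Sylow $2$-subgroup) and apply \ref{maxsg} to $Q_8 \lhd \SL_2(3)$ against a non-normal maximal subgroup of order $6$. The main obstacle throughout is purely bookkeeping: correctly enumerating the conjugacy classes and determining, for each chosen maximal subgroup $M$, exactly how many classes satisfy $M \cap C = \emptyset$; a single miscount collapses the argument. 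I expect (5) to be the most delicate, since $\SL_2(3)$ has the richest class structure and one must track which of its seven classes (including the two classes of elements of order $3$ and the classes of order $4$ and $6$) meet $Q_8$ versus meet the order-$6$ subgroup. Once the two chain lengths are seen to differ in each case, non-gradedness is immediate by definition.
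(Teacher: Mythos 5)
Your overall strategy is the same as the paper's (pick two maximal subgroups, feed the class counts into \ref{maxsg}, and compare the resulting maximal chain lengths), and your subgroup choices for (1), (4) and (5) would in fact work; e.g.\ in (1) the chains through $\ZZ_3\times\ZZ_p$ and through $\langle\tau\rangle\times\ZZ_p$ have lengths $4p$ and $3p+1$, which differ. But case (3) contains a genuine gap. You assert that in $D_{2k}$, $k>3$ odd, ``a reflection generates a non-normal maximal subgroup of order $2$.'' That is true only when $k$ is prime: if $k$ is composite, the subgroup $\langle t\rangle$ generated by a reflection lies inside the proper dihedral subgroup $\langle t, r^{k/d}\rangle\cong D_{2d}$ for any divisor $d$ of $k$ with $1<d<k$, so $\langle t\rangle$ is not maximal and \ref{maxsg} simply does not apply to it. Your argument therefore covers only prime $k$ (where the two lengths are $k+1$ and $(k+5)/2$, distinct precisely because $k>3$) and says nothing about $k=9,15,21,25,\dots$. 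The paper needs two further ideas here: if some prime $p>3$ divides $k$, then $D_{2p}\leq D_{2k}$ is minimal non-abelian and not isomorphic to $S_3$, $D_8$ or $Q_8$, so \ref{mns} applies; otherwise $k$ is a power of $3$, and since $\srl(H)=[\emptyset,H]_\rk$ is an interval of $\srl(G)$ (so gradedness passes to subgroups), it suffices to treat $D_{18}$, where \ref{maxsg} is applied to a maximal subgroup isomorphic to $S_3$ --- not to an order-$2$ subgroup. Without this reduction, or some substitute for it, your case (3) fails.

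Two further weaknesses are worth flagging. First, in case (2) the groups $D_8\times\ZZ_p$ and $Q_8\times\ZZ_p$ are nilpotent, so \emph{every} maximal subgroup is normal; your ``one normal, one non-normal'' template cannot be instantiated there, and you name no specific pair of subgroups. (The paper uses two normal ones: the abelian subgroup of index $2$, giving length $6p$, and the direct factor $D_8$ or $Q_8$ of index $p$, giving $5p+1$.) Second, throughout the proposal the actual class counts --- which are the entire content of the proof, as you yourself note --- are deferred (``I would count\dots and verify''), so even in the cases where your chosen subgroups are correct, what you have is an outline rather than a proof.
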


  \begin{proof}
    \begin{enumerate}
      \item 
    If $G=S_3 \times \ZZ_p$, then $G$ has $p$ central conjugacy classes, $p$ classes of size two 
    (one consisting of elements of order three and the others consisting of elements of order $3p$) 
    and $p$ classes of size three (one consisting of elements of order two and the others consisting of elements of
    order $2p$).  Applying \ref{maxsg} to the 
    unique subgroup of index two in $G$, we see that $\srl(G)$ has a maximal chain of length $4p$.  
    Applying \ref{maxsg} to the first component $S_3$ in the direct product, we see that 
    $\srl(G)$ has a maximal chain of length $$4+3p-3=3p+1<4p.$$

    \item 
    Assume that $G=D_8 \times \ZZ_p$ or $G=Q_8 \times \ZZ_p$.  Then $G$ has $2p$ central conjugacy classes and $3p$ 
    classes of size two.  Applying \ref{maxsg} to a subgroup of index two in $G$ (which is abelian), we see that 
    $\srl(G)$ has a maximal chain of length $6p$.  Applying \ref{maxsg} to a non-abelian subgroup of index 
    $p$ in $G$ (which is normal), we see that $\srl(G)$ has maximal chain of length  $$6+5p-5=5p+1<6p.$$

    \item 
    Assume that $G=D_{2k}$ with $k>3$ odd. If some prime $p>3$ divides $k$, then $G$ contains a dihedral subgroup of 
    order $2p$, which is minimal non-abelian and not isomorphic to any of $S_3$, $D_8$ or $Q_8$.  Thus $\srl(G)$ 
    is not graded by \ref{mns}.  It remains to consider the case where $k$ is a power of three, and it suffices to 
    show that $\srl(G)$ is not graded when $k=9$.  Applying \ref{maxsg} to the unique subgroup of index two in $G=D_{18}$, 
    we see that $\srl(G)$ has a maximal chain of length ten.  Applying \ref{maxsg} to any subgroup isomorphic to 
    $S_3$ (which is not normal), we see that $\srl(G)$ has a maximal chain of length $4+1+3=8$.

    \item
    Assume $G=TV$ with $T= \langle t \rangle \cong \ZZ_2$, $\ZZ_3 \oplus \ZZ_3 \cong V \lhd G$ and 
    $t^{-1}vt=v^{-1}$ for all $ \in V$.  Then $G$ has one central conjugacy class, one class of size nine and four 
    classes of size two.  Applying \ref{maxsg} to $V$, we see that $\srl(G)$ has a maximal chain of length ten.  
    Applying \ref{maxsg} to any subgroup of order six, we see that $\srl(G)$ has a maximal chain of length $4+1+3=8$.

    \item Assume $G=\SL_2(3)$.  Then $G$ has two central conjugacy classes, one class of size six (consisting of elements of order 
    four), and four classes of size four (two consisting of elements of order three and two consisting of elements of order six).  
    Applying \ref{maxsg} to the normal Sylow $2$-subgroup of $G$ (which is isomorphic with $Q_8$), we see that $\srl(G)$ has 
    a maximal chain of length $6+4=10$.  Applying \ref{maxsg} to any cyclic subgroup of order six (which is maximal and 
    not normal and intersects all four classes of size four non-trivially), we see that $\srl(G)$ has a maximal chain of length $6+1+1=8$.
    \end{enumerate}
  \end{proof}

  \begin{Proposition} \label{ind2}
    Let $G$ be a non-abelian finite group with an abelian subgroup $A<G$ of index two.  If $\srl(G)$ is graded, then $G$ is isomorphic 
    to one of $S_3$, $D_8$ or $Q_8$.
  \end{Proposition}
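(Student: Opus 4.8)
The plan is to exploit the fact that gradedness is inherited by the lower intervals of $\srl(G)$. For any subgroup $H\le G$, a subset of $H$ is a subrack of $G$ if and only if it is a subrack of $H$, so $[\emptyset,H]_\rk\cong\srl(H)$; and in any graded lattice every interval $[\emptyset,x]$ is graded, since two maximal chains of $[\emptyset,x]$ of different lengths would, after concatenation with a fixed maximal chain of $[x,G]_\rk$, yield maximal chains of $\srl(G)$ of different lengths. Hence $\srl(H)$ is graded for every $H\le G$, and by \ref{mns} every minimal non-abelian subgroup of $G$ is one of $S_3,D_8,Q_8$, while $|G|$ is even. Write $G=\langle A,t\rangle$ with $t\in G\setminus A$, and let $\sigma$ be the involutory automorphism of the abelian group $A$ given by conjugation by $t$; since $A$ is abelian, every element of $G\setminus A$ induces $\sigma$ on $A$, and $Z(G)=C_A(\sigma)$. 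If $G$ is a $2$-group then $|G|=8$ by \ref{nab2}, so $G\cong D_8$ or $Q_8$ and we are done. Assume henceforth that $G$ is not a $2$-group, and write $A=A_2\times A_{2'}$ for the Sylow $2$-subgroup and the (nontrivial) odd part of $A$, both $\sigma$-invariant.

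First I would determine the action of $\sigma$ on $A_{2'}$. Suppose $\sigma$ inverts an element $v$ of odd prime order $q$, and let $t_2$ be the $2$-part of $t$, of order $2^b\ge2$; then $t_2\in G\setminus A$ also inverts $v$, and $\langle v,t_2\rangle$ is minimal non-abelian of order $q2^b$, because its unique Sylow $q$-subgroup $\langle v\rangle$ is normal and every proper subgroup is either a cyclic $2$-group or lies in the abelian index-two subgroup $\langle v,t_2^2\rangle$. By \ref{mns} this forces $q2^b\in\{6,8\}$, hence $q=3$ and $t_2$ is an involution; in particular $\sigma$ inverts no element of prime order exceeding $3$. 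Two further configurations are excluded in the same spirit: an inverted element of order $9$ yields a dihedral subgroup $D_{18}$, excluded by \ref{small}(3), and an inverted subgroup $\ZZ_3\oplus\ZZ_3$ yields the group of \ref{small}(4). Decomposing the odd group $A_{2'}$ into its $\sigma$-fixed part $F\le Z(G)$ and its inverted subgroup $I=\{a:\sigma(a)=a^{-1}\}$, these exclusions show that $I$ is trivial or cyclic of order $3$.

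If $I$ is trivial, then $A_{2'}\le Z(G)$ is a nontrivial normal Hall $2'$-subgroup, so $G=A_{2'}\times S$ with $S$ a Sylow $2$-subgroup; here $S$ is non-abelian, whence $S\cong D_8$ or $Q_8$ by \ref{nab2}, and then $S\times\ZZ_p\le G$ for a prime $p\mid|A_{2'}|$ contradicts \ref{small}(2). Therefore $I=\langle v\rangle\cong\ZZ_3$, and there is an involution $\tau\in G\setminus A$ with $\tau v\tau=v^{-1}$, so $\langle v,\tau\rangle\cong S_3$. Moreover $F=1$, for a nontrivial central element of prime order $p$ would give $\langle v,\tau\rangle\times\ZZ_p\cong S_3\times\ZZ_p\le G$, excluded by \ref{small}(1). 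Finally I would treat $A_2$: if $A_2\ne1$, then $C_{A_2}(\sigma)\ne1$ (an automorphism of $2$-power order of a nontrivial finite $2$-group has nontrivial fixed points), so $A_2$ contains a $\sigma$-fixed, hence central, involution $z$; as $z\in A$ while the involutions of $\langle v,\tau\rangle$ lie in $G\setminus A$, we obtain $\langle z\rangle\times\langle v,\tau\rangle\cong S_3\times\ZZ_2\le G$, again contradicting \ref{small}(1). Hence $A_2=1$ and $G=\langle v,\tau\rangle\cong S_3$.

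The main obstacle is the structural bookkeeping of the second and third paragraphs: organizing the $\sigma$-decomposition of $A$ so that each potential summand is matched with exactly one forbidden subgroup from \ref{small}, or with a minimal non-abelian subgroup controlled by \ref{mns}. The one point that must be checked explicitly is that \ref{small}(1) is applied at $p=2$, that is, that $S_3\times\ZZ_2$ is non-graded; this is covered by the computation in \ref{small}, whose inequality $3p+1<4p$ already holds at $p=2$.
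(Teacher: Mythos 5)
Your proposal is correct, and its skeleton matches the paper's: dispose of the $2$-group case by \ref{nab2}, otherwise analyse how an element of $G\setminus A$ acts on the odd part of $A$, and rule out every configuration by producing a forbidden subgroup via \ref{mns} or \ref{small}, ending with the fixed central involution that yields $S_3\times\ZZ_2$. The middle of the argument, however, is organized genuinely differently. The paper fixes a minimal non-abelian subgroup $H$ and takes $t\in H\setminus A$ (so $|t|\in\{2,4\}$), proves \emph{first} that the fixed points of $t$ on the odd Hall subgroup $B\leq A$ are trivial (a nontrivial fixed point of odd prime order $p$ gives $H\times\ZZ_p$, forbidden by \ref{small}(1),(2)), and only then deduces from a commutator identity that $t$ inverts all of $B$, finishing with \ref{small}(3),(4). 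You instead take the $2$-part $t_2$ of an arbitrary $t\in G\setminus A$, invoke the canonical splitting $A_{2'}=F\times I$ of an odd abelian group under an involutory automorphism, constrain the inverted part $I$ first (to $1$ or $\ZZ_3$, by exhibiting the minimal non-abelian subgroups $\langle v,t_2\rangle$), and kill the fixed part $F$ last; this creates the extra branch $I=1$, which you close through $G=A_{2'}\times S$ and \ref{small}(2). Your route buys two things the paper leaves tacit: an explicit proof that gradedness passes to the lower intervals $[\emptyset,H]_\rk\cong\srl(H)$ (the paper uses this silently every time it applies \ref{mns}, \ref{nab2} or \ref{small} to a proper subgroup of $G$), and the explicit check that \ref{small}(1) holds at $p=2$ (the paper also needs $S_3\times\ZZ_2$ at the very same spot; note its proof of \ref{small}(1) speaks of ``the unique subgroup of index two'', which is not unique when $p=2$, though the chain-length computation survives). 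The paper's route, in exchange, avoids the one fact you assert without proof, namely the splitting $A_{2'}=F\times I$; that fact is standard (write $a^2=(a\sigma(a))\cdot(a\sigma(a)^{-1})$ and use that squaring is bijective on an abelian group of odd order), but a line of justification would be appropriate.
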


  \begin{proof}
    Assume $\srl(G)$ is graded. 

    As $G$ is non-abelian, 
    there is some minimal non-abelian subgroup $H$ of $G$.  By \ref{mns}, $H$ is isomorphic to one of $S_3$, $D_8$ or $Q_8$.  
    Moreover, $[H:H \cap A]=2$. In particular, if $H = S_3$ then $A$ contains all elements of order $3$ in $G$.  
    Fix some $t \in H \setminus A$. Note that $t$ is of order $2$ or $4$. 

    Let $B$ be a Hall $2^\prime$-subgroup of $A$.  So, $B$ consists of all elements of odd order in $A$.  As $B$ is characteristic 
    in $A$, we see that $t$ normalizes $B$.  Note that $C_B(t)=C_B(H)$, as $H=\langle t,H \cap A \rangle$ and $B \leq H \cap A$.  
    Note also that $C_B(H) \cap H=1$, as $Z(H)$ contains no non-trivial element of odd order. It follows now that if $C_B(t)$ is non-trivial, 
    then there is some element of prime order $p$ in $C_B(H) \setminus H$.  In this case, $G$ contains a subgroup isomorphic to 
    $H \times \ZZ_p$, which is impossible by \ref{small}.

    It follows that $C_B(t)=1$.  As $t^2 \in A$, we know that conjugation by $t$ induces an automorphism of $B$ having order two.  
    As $B$ is abelian, this means that 
    \begin{eqnarray*}
      t^{-1} (x(t^{-1}xt)) t & = & (t^{-1}xt) (t^{-1}t^{-1} x tt) \\
                          & = & t^{-1}xt x \\
                          & = & x t^{-1}xt 
    \end{eqnarray*}
    for each $x \in B$.  From $C_B(t) = 1$ we then deduce that $t^{-1}xt=x^{-1}$ for all 
    $x \in B$. If $B \neq 1$ and $t$ is of order $4$ then $G$ contains a minimal non-abelian subgroup not listed in 
    \ref{mns}. If $|B|=1$ then $G$ is a $2$-group and therefore isomorphic with $D_8$ or $Q_8$ by \ref{nab2}.
    Thus $B \neq 1$ and $t$ is of order $2$. By \ref{small}(3,4) it follows that $|B| = 3$ and $\langle t,B \rangle \cong S_3$. 
    Let $P$ be a Sylow $2$-subgroup of $A$.  So, $A=P \oplus B$.  
    If $P \neq 1$ then $P$ contains an odd number of elements of order two (as these elements generate a non-trivial elementary abelian group).  
    It follows that there is some $x \in C_P(t)$ of order $2$.  But then $\langle x,t,B \rangle \cong S_3 \times \ZZ_2$, 
    contradicting \ref{small}. Therefore, $P=B$ and $G \cong S_3$.
  \end{proof}

  We are now ready to prove \ref{graded}.  

  \begin{proof}[Proof of \ref{graded}] 
    Assume that $\srl(G)$ is graded and that $G$ is non-abelian.  By \ref{solvable}, $G$ is solvable.  
    Fix a composition series $$1=G_0 \lhd G_1 \lhd \ldots \lhd G_r=G.$$  Let $m$ be the smallest $j$ such that $G_j$ is non-abelian and let 
    $H$ be a minimal non-abelian subgroup of $G_m$.  

    \noindent {\sf Claim:} $H=G_m=G$.

    $\triangleleft$ 
    As $G_{m-1}$ is abelian and $H$ is generated by elements of $2$-power order (by \ref{mns}), we see that some element of $2$-power order 
    lies in $H \setminus G_{m-1}$.  As $[G_m:G_{m-1}]$ is prime, we see that $[G_m:G_{m-1}]=2$.  
    From the fact that $G_{m-1}$ is abelian and \ref{ind2} it follows that $G_m=H$.
    $\triangleright$ 

    It remains to show that $G_m=G$. Assume for contradiction that this is not the case. 
    Then $r \geq m+1$ and $[G_{m+1}:G_m]=p$ for some prime $p$.  

    Since $G_m$ is minimal non-abelian by \ref{mns} we have to distinguish the following cases.

    \smallskip

    \noindent {\sf Case:} $G_m \cong S_3$

    It is well known and not hard to show that every automorphism of $S_3$ is inner.  It follows 
    that we can choose some $x \in G_{m+1}$ such that $\langle G_m,x \rangle=G_{m+1}$ and $x$ centralizes $G_m$.  As $S_3$ has trivial center, 
    we see that $\langle x \rangle \cap G_m=1$ and $G_m \cong S_3 \times \ZZ_p$, contradicting \ref{small}.

    \smallskip

    \noindent {\sf Case:} $G_m \cong D_8$ or $G_m \cong Q_8$

    By \ref{nab2}, $G_m$ is a Sylow $2$-subgroup of $G$.  Therefore, 
    $G_{m+1}$ is the semi-direct product $PG_m$, where $P$ is cyclic of odd prime order $p$.  We claim that either this product is direct, 
    or $G_{m+1} \cong \SL_2(3)$.  \ref{graded} follows from the claim and \ref{small}.  

   The claim is in fact known to be true, and is not hard to prove.  We give a sketch here.  

   An odd order automorphism of a $2$-group is determined by its action on the Frattini quotient.  
   In both cases the Frattini quotient is elementary abelian of rank two. Since $\GL_2(2)$ has order six, 
   the only non-trivial action is obtained by taking $P$ to be the subgroup of order $3$ in $\GL_2(2)$.
   
   Thus, if $G_m \cong Q_8$, then either $PG_m$ is a direct product or $G_{m+1} \cong SL_2(3)$.  
   Consider the case $G_m = D_8$. Indeed $D_8$ has no automorphism of order three. Note, since $D_8$ has three maximal subgroups, 
   not all isomorphic, any automorphism of order three must normalize all maximal subgroups. From this and that fact that all maximal
   subgroups contain the center of $D_8$, it quickly follows that any automorphism of order three centralizes each maximal subgroup and 
   therefore centralizes all of $D_8$.
  \end{proof}

\section{Questions and Problems}
  \label{sec5}


  If $R$ is a quandle then $\srl(R)$ is easily seen to be an atomic lattice. 
  For general (finite) racks we do not know if this property holds.

  \begin{Question}
    Is $\srl(R)$ atomic for all racks $R$ ?
  \end{Question}

  Consider the converse question for quandles: Does any atomic lattice appear 
  as the subrack lattice of a quandle ? This question has a negative answer.
  In \cite{Ve} it was conjectured and later proved in \cite{Mc} and
  then in \cite{HuStVo} that there is no indecomposable quandle of size
  $2p$ for $p > 5$. In particular, this implies that the lattice
  with one top and one bottom element and an antichain of $2p$ 
  incomparable elements in between is not the subrack lattice of a
  quandle. 
  However, is not clear which general restrictions apply to subrack lattices of
  quandles.

  We have seen in \ref{sec2} that for a rack $R$ the distribution of the 
  numbers $i$ for which 
  $\widetilde{H}_i(\Delta(\srl(R)),\ZZ) \neq 0$ can be rather complicated.
  On the other hand for all racks $R$ we have studied 
  $\widetilde{H}_\bullet(\Delta(\srl(R)),\ZZ) \neq 0$ and is torsion free. 
  Indeed, we do not know of any rack $R$ such that $\Delta(\srl(R))$ is
  acyclic. Since the order complex of a lattice is acyclic if 
  the lattice is not complemented, we also do not know of any
  rack $R$ for which $\srl(R)$ is not complemented. 

  \begin{Question} Is there a rack $R$ such that
    $\srl(R)$ is not complemented ?
    Is there a rack $R$ such that 
    $\Delta(\srl(R))$ is acyclic ? Is there a rack $R$ such that 
    $\Delta(\srl(R))$ has torsion in homology ?
  \end{Question}
  \ref{main} also raises the question how much more about the group
  structure can be deduced from the subrack lattice. For 
  $G = D_8$ and $H = Q_8$ the two non-abelian groups of order $8$ 
  we have $\srl(G) \cong \srl(H)$. In \ref{fig1} we depict the 
  subrack lattice of $\srl(R)$ where $R$ is the rack of non-central 
  elements of $G$ or $H$. By \ref{le:product}(1) $\srl(G) \cong \srl(H)
  \cong \srl(R) \times 2^{[2]}$.  
  But in this case it can be checked that the racks $G$ and $H$ are 
  actually isomorphic. 

  \begin{Question}
    Are there two groups $G,H$, which have isomorphic subrack lattices,
    but are non-isomorphic as racks?
  \end{Question}

  \begin{figure}
      \setlength{\unitlength}{0.75cm}
       \vskip9cm
       \begin{picture}(0,0)(5,2)
         \includegraphics[width=0.6\textwidth]{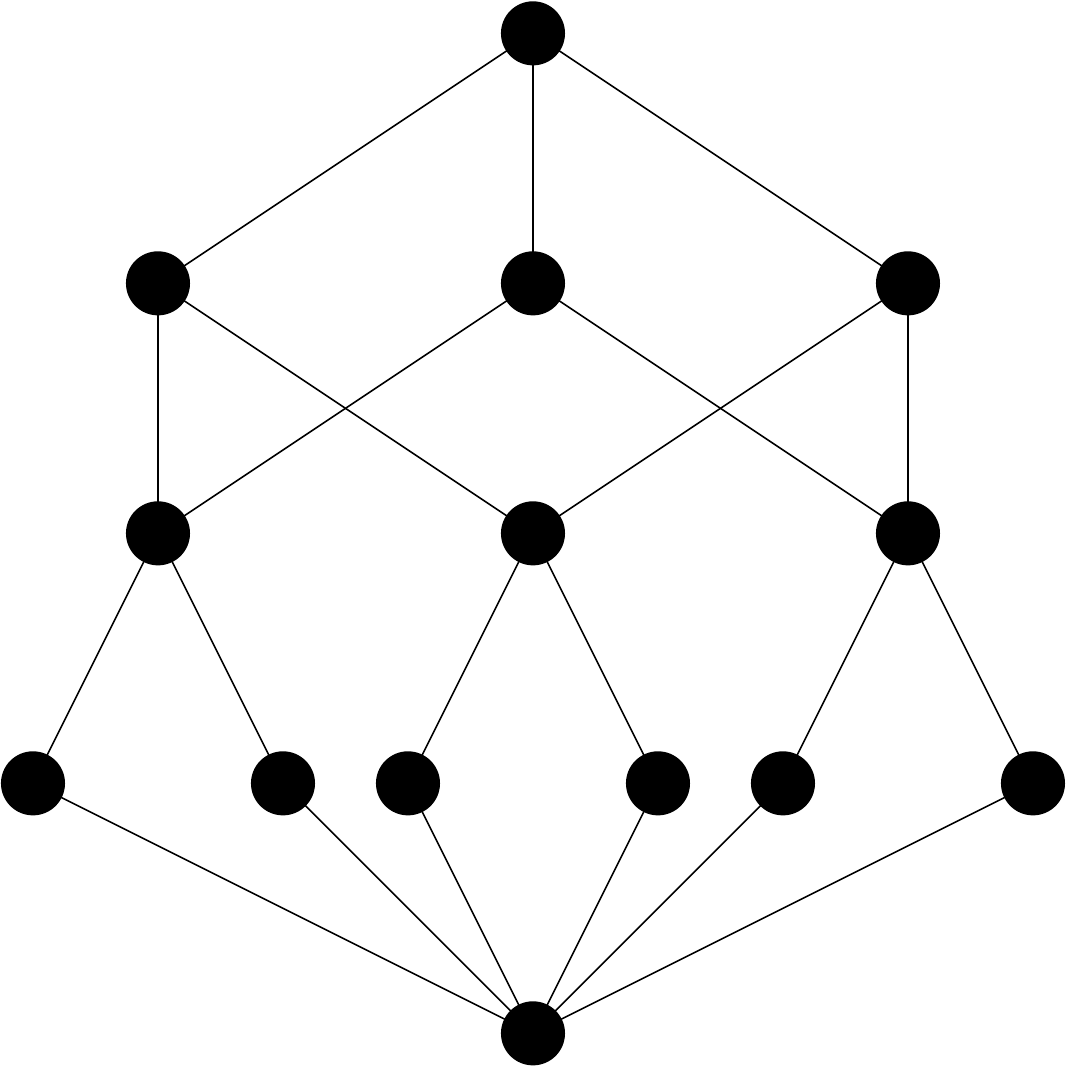}
       \end{picture}%
       \vskip1.3cm
       \caption{Subracklattice of non-central elements of $D_8 $ or $Q_8$}
      \label{fig1}
  \end{figure}


\begin{thebibliography}{xxx}
  \bibitem{An}
    N. Andruskiewitsch,
    On finite-dimensional Hopf algebras,
    In: \textit{Proceedings of the ICM, Seoul, 2014}.
    Preprint: arXiv:1403.7838.
  \bibitem{AnGr}
     N. Andruskiewitsch, M. Gra\~na,
     From racks to pointed Hopf algebras,
     Adv. Math. {\bf 178} (2003) 177--243. 
  \bibitem{BaLu} 
     R. Baddeley, A. Lucchini, 
     On representing finite lattices as intervals in subgroup 
     lattices of finite groups,
     J. Algebra {\bf 196} (1997) 1--100. 
  \bibitem{Bjoerner}
     A. Bj\"orner, 
     Topological methods. 
     Handbook of combinatorics {\bf 2}, 1819--1872, 
     Elsevier, Amsterdam, 1995. 
  \bibitem{BjoernerWelker}
     A. Bj{\"o}rner, V. Welker, 
     The homology of “k-equal” manifolds 
     and related partition lattices, 
     Adv. Math. {\bf 110} (1995) 277--313.
  \bibitem{Br} 
     E. Brieskorn,
     Automorphic sets and braids and singularities,
     In \textit{Braids (Santa Cruz, CA, 1986)},
     Cont. Math. {\bf 78} 45--115, Amer. Math. Soc., Providence, 1988.
  \bibitem{Brown} 
     K.S. Brown,
     Euler characteristics of groups: the p-fractional part, 
     Invent. Math. {\bf 29} (1975) 1--5. 
  \bibitem{Bu} 
     W. Burnside,
     Groups of order $p^\alpha q^\beta$,
     Proc. London Math. Soc., Ser. 2, {\bf 1} (1904) 388--392. 	 
  \bibitem{Ca}
     A.R. Camina, R.D. Camina,
     Implications of conjugacy class size,
     J. Group Theory {\bf 1} (1998) 257--269.
  \bibitem{CHM}
    J. Cossey, T. Hawkes, A. Mann,
    A criterion for a group to be nilpotent,
    Bull. London Math. Soc. {\bf 24} (1992) 267--270. 
  \bibitem{DoHa}
    K. Doerk, T. Hawkes,
    Finite Soluble Groups.
    Walter de Gruyter, Berlin, New York, 1992.
  \bibitem{ElNe}
     M. Elhamdadi, S. Nelson,
     Quandles, 
     Student Math. Lib. {\bf 74},
     Amer. Math. Soc., Providence, 2015. 
  \bibitem{EtGr} 
    P. Etingof, M. Gra\~na,
    On rack cohomology,
    J. Pure Appl. Algebra {\bf 177} (2003) 49--59.
  \bibitem{FeSe} 
    W. Feit, G. Seitz,
    On finite rational groups and related topics, 
    Illinois J. Math. {\bf 33} (1989) 103--131.   
  \bibitem{HuStVo} 
    A. Hulpke, D. Stanovsk\'y, P. Vojt\v echovsk\'y,
    Connected quandles and transitive groups,
    J. Pure Appl. Algebra {\bf 220} (2016), 735--758. 
  \bibitem{Hu}
    B. Huppert,
    Normalteiler und maximale Untergruppen 
    endlicher Gruppen,
    Math. Z. {\bf 60} (1954) 409--434.
  \bibitem{Jo}
    D. Joyce,
    A classifying invariant of knots, the knot quandle,
    J. Pure Appl. Algebra {\bf 23} (1982) 37--65.
  \bibitem{Ka} 
    M. Kano, 
    On the number of conjugate classes of maximal subgroups in finite groups,
    Proc. Japan Acad. Ser. A Math. Sci.  {\bf 55} (1979) 264--265. 
  \bibitem{KuSt} 
    H. Kurzweil, B. Stellmacher, 
    The theory of finite groups. 
    Springer-Verlag, New York, 2004.
  \bibitem{Ma} 
     A. Mann,
     Conjugacy class sizes in finite groups,
     J. Aust. Math. Soc. {\bf 85} (2008) 251--255.
  \bibitem{MaWo}
    O. Manz, T. R. Wolf,
    Representations of Solvable Groups.
    Cambridge University Press, Cambridge, 1993. 
  \bibitem{Mc} 
    J. McCarron,
    Connected quandles with order equal to twice an odd prime,
    arXiv:1210.2150. 
  \bibitem{MiMo} 
    G.A. Miller, H.C. Moreno, 
    Non-abelian groups in which every subgroup is abelian.
    Trans. Amer. Math. Soc. {\bf 4} (1903) 398--404. 
  \bibitem{O}
    \O. Ore,
    Contributions to the theory of finite groups,
    Duke Math. J. {\bf 5} (1939) 431--460.  
  \bibitem{Q}
    D. Quillen,
    Homotopy properties of the poset of non-trivial $p$-subgroups
    of a group,
    Adv.  Math.  {\bf 28} (1978) 101--128.
  \bibitem{ShareshianWoodroofe}
    J. Shareshian, R. Woodroofe,
    A new subgroup lattice characterization of finite solvable groups,
    J. Alg. {\bf 351} (2012) 448--458. 
  \bibitem{ShareshianWoodroofeII}
    J. Shareshian, R. Woodroofe,
    Order complexes of coset posets of finite groups are not contractible,
    arXiv:1406.6067.
  \bibitem{Schmidt}
    R. Schmidt, 
    Subgroup lattices of groups,
    de Gruyter Exp. in Math. {\bf 14},
    Walter de Gruyter, Berlin, 1994.
  \bibitem{Smith}
    S.D. Smith, Subgroup complexes,
    Math. Surveys and Monographs {\bf 179},
    Amer. Math. Soc., Providence, 2011.
  \bibitem{SuzukiII}
    M.  Suzuki, 
    Structure of a group and the structure of its lattice of subgroups,
    Erg. der Math. und Grenz. {\bf 10}, Springer-Verlag, Berlin, 1956.
  \bibitem{Suzuki} 
    M. Suzuki, Group theory. II, Grundl. Math. Wiss. {\bf 248},
    Springer-Verlag, Berlin, 1986.
  \bibitem{Th}
    G. Thomsen, 
    Grundlagen der Elementargeometrie in gruppenalgebraischer Behandlung,
    Hamburg. Math. Einzelschr. {\bf 15}, Teubner, Leipzig, 1933.
  \bibitem{Ve}
    L. Vendramin,
    On the classification of quandles of low order,
    J. Knot Theory Ramifications {\bf 21} (2012) 1250088. 
  \bibitem{Wielandt}
    H. Wielandt,
    Finite permutation groups,
    Academic Press, New York, 1964.
\end{thebibliography}
\end{document}